\documentclass[11pt]{amsart}
\usepackage[utf8]{inputenc}
\usepackage[pdf]{graphviz}
\usepackage{fourier}
\usepackage[margin=3cm]{geometry}
\usepackage{amssymb}
\usepackage{graphicx}

\newtheorem{theorem}{Theorem}[section]
\newtheorem{proposition}[theorem]{Proposition}
\newtheorem{lemma}[theorem]{Lemma}
\newtheorem{corollary}[theorem]{Corollary}
\newtheorem{question}[theorem]{Question}
\theoremstyle{remark}
\newtheorem{example}[theorem]{Example}
\newtheorem{remark}[theorem]{Remark}

\DeclareMathOperator{\Ap}{Ap}
\usepackage{hyperref}

\title{Apéry sets and the ideal class monoid of a numerical semigroup}
\date{}

\author[Casabella]{Laura Casabella}
\address[Casabella]{Max Planck Institute for Mathematics in the Sciences - 04103 Leipzig, Germany}
\email{laura.casabella@mis.mpg.de}

\author[D'Anna]{Marco D'Anna}
\address[D'Anna]{Dipartimento di Matematica e Informatica, Viale A. Doria, 6 - I 95125 – Catania, Italia}
\email{mdanna@dmi.unict.it}

\author[Garc\'ia-S\'anchez]{Pedro A. Garc\'ia-S\'anchez}
\address[Garc\'ia-S\'anchez]{Departamento de \'Algebra and IMAG, Universidad de Granada, E-18071 Granada, Espa\~na}
\email{pedro@ugr.es} 
\thanks{The third author is supported by the grant number ProyExcel\_00868 (Proyecto de Excelencia de la Junta de Andalucía) and by the Junta de Andaluc\'ia Grant Number FQM--343}

\begin{document}

\maketitle
\begin{abstract}
The aim of this article is to study the ideal class monoid $\mathcal{C}\ell(S)$ of a numerical semigroup $S$ introduced by V. Barucci and F. Khouja. We prove new bounds on the cardinality of $\mathcal{C}\ell(S)$. We observe that $\mathcal{C}\ell(S)$ is isomorphic to the monoid of ideals of $S$ whose smallest element is 0, which helps to relate $\mathcal{C}\ell(S)$ to the Apéry sets and the Kunz coordinates of $S$. We study some combinatorial and algebraic properties of $\mathcal{C}\ell(S)$, including the reduction number of ideals, and the Hasse diagrams of $\mathcal{C}\ell(S)$ with respect to inclusion and addition. From these diagrams we can recover some notable invariants of the semigroup. Lastly, we prove some results about irreducible elements, atoms, quarks and primes of $(\mathcal{C}\ell(S),+)$. Idempotent ideals coincide with over-semigroups and idempotent quarks correspond to unitary extensions of the semigroup. We show that a numerical semigroup is irreducible if and only if $\mathcal{C}\ell(S)$ has at most two quarks.
\end{abstract}

\section{Introduction}
The ideal class group of a Dedekind domain is a classical mathematical object that has been extensively studied, and has proven to be a useful tool to retrieve information about the underlying domain. This concept can be generalized to the ideal class monoid of any integral domain, which is defined as the set of fractional ideals modulo principal ideals, endowed with multiplication of classes.

The ideal class monoid $\mathcal{C}\ell(S)$ of a numerical semigroup $S$ is defined analogously by considering the set of ideals modulo principal ideals of $S$, endowed with addition. This object is strictly related to the ideal class monoid of the associated semigroup ring $\mathbb{K}\llbracket S\rrbracket$ via the valuation map $v: \mathcal{C}\ell(\mathbb{K}\llbracket S\rrbracket) \to \mathcal{C}\ell(S)$ \cite{laura}. The latter object is not very well-understood. However, by working on $\mathcal{C}\ell(S)$ it might be possible to interpret $\mathcal{C}\ell(\mathbb{K}\llbracket S\rrbracket)$ from a combinatorial (and hence easier) point of view. 

The aim of this article is to extend the study of $\mathcal{C}\ell(S)$ which was initiated by V. Barucci and F. Khouja in \cite{b-k}. We prove new estimations of the cardinality of $\mathcal{C}\ell(S)$. The most relevant results in this direction are Propositions~\ref{prop:lo-up-bound-clS-genus-type} and \ref{improved}, which ensure that if $g$ is the genus of $S$, $t$ its type and $m$ its multiplicity, then 
\[2^{m-1}+g-m+1 \le |\mathcal{C}\ell(S)| \le 2^g-2^{g-t}+1.\]
Moreover, the upper bound is reached if and only if either $S= \{0,m, \to \}$ or $S= \langle m, m+1, \dots, 2m-2 \rangle$.

We then observe that $\mathcal{C}\ell(S)$ is isomorphic to the monoid $\mathcal{I}_0(S)$ of ideals of $S$ whose smallest element is 0. This will allow us to relate $\mathcal{C}\ell(S)$ to the Apéry sets and the Kunz coordinates of $S$ (see Theorem~\ref{kunz}). Thanks to this new setting, we will be able to prove further results on the cardinality of $\mathcal{C}\ell(S)$ (Corollary~\ref{cor:bound-ideals-kunz}) and to retrieve well-known results (Proposition~\ref{known}). In particular, if $m$ is the multiplicity of $S$ and $(k_1,\dots,k_{m-1})$ are the Kunz coordinates of $S$, then  
 \[|\mathcal{C}\ell(S)|\le \prod_{i=1}^{m-1} (k_i+1).\]
Equality holds if and only if $S=\langle m\rangle \cup (c+\mathbb{N})$, with $c$ a positive integer greater than $m$, or, equivalently,
\begin{enumerate}
    \item $k_1\ge \dots \ge k_{m-1}$, and
    \item $k_1-k_{m-1}\le 1$.
\end{enumerate}

Next, we study some combinatorial and algebraic properties of $\mathcal{C}\ell(S)$. Proposition~\ref{prop:canonical-apery} provides a description of the canonical ideal of $S$ in terms of its Apéry set. Then we study the reduction number of an ideal $I$, which is defined, for $I \in \mathcal{I}_0$, as the smallest $r$ such that $(r+1)I=rI$. For an ideal $I$ in $\mathcal{I}_0$ generated by 0 and a single gap of $S$, we compute the reduction number of $I$ in Proposition~\ref{prop:basic-reduction}. In Proposition~\ref{pr:red-gaps-smaller-mult} we give an upper bound of the reduction number of ideals generated by gaps that are smaller than the multiplicity of the semigroup. 

We show that $\mathcal{I}_0(S)$ has a unique maximal element with respect to inclusion, $\mathbb{N}$, and a minimal element, $S$. The number of maximal elements strictly contained in $\mathbb{N}$ equals the type of the semigroup, while the number of minimal elements strictly containing $S$ is precisely the mulitiplicity minus one. The maximal strictly ascending chain of ideals in $\mathcal{I}_0(S)$ has length equal to the genus of the semigroup plus one. Thus, from the Hasse diagram (with respect to inclusion) of $\mathcal{I}_0(S)$ we recover the multiplicity, type and genus of $S$. In Remark~\ref{width}, we also give a lower bound for the width of this Hasse diagram. 

In Section~\ref{quarks}, we focus on the study of irreducible elements, atoms, primes and quarks of the ideal class monoid of a numerical semigroup. Minimal non-zero ideals are always quarks, and for every gap $g$, the (class of the) ideal $\{0,g\}+S$ is an irreducible element in $\mathcal{C}\ell(S)$. Also, the set of unitary extensions of $S$ corresponds with the set of idempotent quarks of its ideal class monoid. We show that the semigroup $S$ is irreducible (it cannot be expressed as the intersection of two numerical semigroups properly containing it) if and only if its ideal class monoid has at most two quarks. 

The last section is devoted to several open problems that may serve as a motivation to continue studying the ideal class monoid of a numerical semigroup. 

Throughout this paper we present a series of examples meant to illustrate the results proven. For the development of most of these examples we used the \texttt{GAP} \cite{GAP4} package \texttt{numericalsgps} \cite{numericalsgps} (in fact, some of our results were stated after analysing a series of computer experiments). The functions used in this manuscript, together with a small tutorial, can be found at 

\centerline{
\url{https://github.com/numerical-semigroups/ideal-class-monoid}.}

\section{Recap on numerical semigroups and ideals}

Let $\mathbb{N}$ denote the set of non-negative integers. A numerical semigroup $S$ is a submonoid of $(\mathbb{N},+)$ with finite complement in $\mathbb{N}$. The set $\mathbb{N} \setminus S$ is known as the set of gaps of $S$, denoted $\operatorname{G}(S)$, and its cardinality is the genus of $S$, $\operatorname{g}(S)$. Given a subset $A$ of $\mathbb{N}$, the submonoid generated by $A$ is $\langle A\rangle = \{ a_1+\dots+ a_t : t\in \mathbb{N}, a_1,\dots,a_t\in A\}$. If $A\subseteq S$ is such that $\langle A\rangle=S$, then we say that $A$ is a generating set of $S$. Every numerical semigroup admits a unique minimal generating set (whose elements we refer to as minimal generators), which is $S^*\setminus (S^*+S^*)$, where $S^*=S\setminus\{0\}$, and its cardinality is known as the embedding dimension of $S$, denoted $\operatorname{e}(S)$ (see for instance \cite[Chaper~1]{ns}). The smallest positive integer in $S$ is called the multiplicity of $S$, $\operatorname{m}(S)$. Clearly, two minimal generators of $S$ cannot be congruent modulo the multiplicity of $S$ and, consequently, $\operatorname{e}(S)\le \operatorname{m}(S)$.

The largest integer not belonging to a numerical semigroup $S$ (this integer exists as we are assuming $\operatorname{G}(S)$ to have finitely many elements) is known as the Frobenius number of $S$ and will be denoted by $\operatorname{F}(S)$. In particular, $\operatorname{F}(S)+1+\mathbb{N}\subseteq S$. The integer $\operatorname{F}(S)+1$ is the conductor of $S$, $\operatorname{c}(S)$. 

A numerical semigroup $S$ induces the following ordering on $\mathbb{Z}$: $a\le_S b$ if $b-a\in S$. The set of maximal elements in $\operatorname{G}(S)$ with respect to $\le_S$ is denoted by $\operatorname{PF}(S)$, and its elements are the pseudo-Frobenius numbers of $S$. Observe that by the maximality of these elements, if $f\in \operatorname{PF}(S)$, then for every non-zero $s\in S$, we have that $f+s\in S$. The cardinality of $\operatorname{PF}(S)$ is known as the type of $S$, and will be denoted by $\operatorname{t}(S)$. 

Recall that a gap $f$ of a numerical semigroup $S$ is a special gap if $S\cup\{f\}$ is a numerical semigroup. We denote the set of special gaps of $S$ by $\operatorname{SG}(S)$. It is well known (see for instance \cite[Section~3.3]{ns}) that
\[
\operatorname{SG}(S)=\{f\in \operatorname{PF}(S) : 2f\in S\}.
\]
In particular, the cardinality of $\operatorname{SG}(S)$ corresponds with the number of unitary extensions of a numerical semigroup (numerical semigroups $T$ containing $S$ such that $|T\setminus S|=1$). 

Given a numerical semigroup $S$ and a non-zero element $n\in S$, the Apéry set of $n$ in $S$ is the set 
\[
\operatorname{Ap}(S,n)=\{ s\in S : s-n\not\in S\}.
\]
This set contains $n$ elements, one per congruence class modulo $n$. In fact, if $w_i$ is the least element in $S$ congruent to $i$ modulo $n$, $i\in \{0,\ldots,n-1\}$, then $\operatorname{Ap}(S,n)=\{w_0,w_1,\dots,w_{n-1}\}$, and clearly, $w_0=0$. Recall that (see for instance \cite[Proposition~2.20]{ns}) for a numerical semigroup $S$ with multiplicity $m$, we have that 
\begin{equation}\label{eq:pF-max}
\operatorname{PF}(S)=\{f\in \mathbb{Z}\setminus S: f+(S\setminus\{0\})\subseteq S\}=-m+\operatorname{Maximals}_{\le_S}(\Ap(S,m)).
\end{equation}

A numerical semigroup $S$ is symmetric if for every integer $x\not\in S$, $\operatorname{F}(S)-x\in S$. This is equivalent to $\operatorname{g}(S)=(\operatorname{F}(S)+1)/2$, or to the fact that $\operatorname{F}(S)$ is odd and $S$ is maximal (with respect to set inclusion) in the set of numerical semigroups not containing $\operatorname{F}(S)$. If $\operatorname{F}(S)$ is even, then $S$ is said to be pseudo-symmetric if for every integer $x\not\in S$, $x\neq \operatorname{F}(S)/2$, we have that $\operatorname{F}(S)-x\in S$. This is equivalent to $\operatorname{g}(S)=(\operatorname{F}(S)+2)/2$, or to the fact that $S$ is maximal in the set of numerical semigroups not containing $\operatorname{F}(S)$. A numerical semigroup $S$ is irreducible if it cannot be expressed as the intersection of two numerical semigroups properly containing it, or equivalently, it is maximal in the set of numerical semigroups not containing $\operatorname{F}(S)$. Thus a numerical semigroup $S$ is irreducible if and only if it is either symmetric (and its Frobenius number is odd) or pseudo-symmetric (and its Frobenius number is even). These equivalences and other characterisations of irreducible numerical semigroups can be found in \cite[Chapter~3]{ns}.

Let $S$ be a numerical semigroup. We say that $E\subseteq \mathbb{Z}$ is an ideal of $S$ if $S+E\subseteq E$ and there exists $s\in S$ such that $s+E\subseteq S$. This last condition implies that there exists $\min(E)$, which is usually known as the multiplicity of $E$. Notice that if $E$ is an ideal, so is $-\min(E)+E$. 

An ideal $E$ of $S$ is said to be integral if $E\subseteq S$. The set $S\setminus\{0\}$ is known as the maximal ideal of $S$.

Given a set of integers, $\{x_1,\dots,x_r\}$, the set $\{x_1,\dots,x_r\}+S=\bigcup_{i=1}^r (x_i+S)$ is an ideal of $S$, known as the ideal generated by $\{x_1,\dots,x_r\}$. When $r=1$, we write $x_1+S$ instead of $\{x_1\}+S$, and we say that $x_1+S$ is a principal ideal.

Let $E$ be an ideal of a numerical semigroup $S$. Let $m$ be the multiplicity of $S$ and set $x_1=\min(E)$. Then $x_1+S\subseteq E$. If $x_1+S=E$, then $E$ is a principal ideal. Otherwise, take $x_2=\min(E\setminus(x_1+S))$. Clearly, $\{x_1,x_2\}+S\subseteq E$. Moreover, $x_1$ and $x_2$ are not congruent modulo $m$, since $x_2\not\in x_1+S$. This process must stop after a finite number of steps, since the $x_i$s obtained are not congruent modulo $m$. Thus, $E=\{x_1,\dots,x_r\}+S$ for some $\{x_1,\dots,x_r\}\subseteq \mathbb{Z}$, $r\le m$. This set is not only a generating set of $E$, but a minimal generating set of $E$ in the sense that none of its proper subsets $X$ verifies that $X+S=E$. The cardinality of the minimal generating set of $E$ is known as the embedding dimension of $E$ and it is denoted as $\nu(E)$. Clearly, $\nu(E)\le m$.

Given two ideals $I$ and $J$ of $S$, the following sets are also ideals of $S$:
\begin{itemize}
    \item $I\cap J$,
    \item $I\cup J$,
    \item $I+J = \{i+j : i\in I, j\in J\}$.
\end{itemize}

For an ideal $E$ of $S$, we define $\operatorname{F}(E)=\max(\mathbb{Z}\setminus E)$.

\section{The ideal class monoid of a numerical semigroup}

Let $S$ be a numerical semigroup. The set of all ideals of $S$, 
\[
\mathcal{I}(S)=\{E\subset \mathbb{Z} : E \text{ is an ideal of } S\},
\]
is a monoid with respect to addition, since for every ideals $I$ and $J$ of $S$, $I+J$ is also an ideal of $S$, and $I+S=I$ for every $I$ (and so $S$ is the identity element of this monoid). 

\begin{remark}\label{rem:non-unit-cancellative}
    The monoid $(\mathcal{I}(S),+)$ is not cancellative ($I+J=I+K$ for $I,J,K\in \mathcal{I}(S)$ implies $J=K$). As a matter of fact, it is not even unit-cancellative ($I+J=J$ implies $I$ is a unit). To see this, let $F$ be the Frobenius number of $S$, and set $I=\{0,F\}+S$. Then it is easy to see that $I+I=I$. If there exists $J\in \mathcal{I}(S)$ such that $I+J=S$, then $J=0+J\subseteq S$. Also $F+J\subseteq S$, which forces $0\not\in J$. As $0\in S=J+I$, $0=j+i$ for some $j\in J$ and $i\in I$. But this is impossible, since $j>0$ ($J\subseteq S\setminus\{0\}$) and $i\ge 0$.

    Observe also that if we consider only integral ideals of $S$ (ideals contained in $S$), then the resulting monoid is unit-cancellative. Let $I$ and $J$ be two ideals of $S$ with $I\subseteq S$ and $J\subseteq S$. Set $i=\min(I)$ and $j=\min(J)$. If $I+J=I$, then $i=x+y$ for some $x\in I$ and $j\in J$. Thus $i\ge i+j\ge i$, which forces $j=0$. But then $0\in J$, and $J+S=S$ yields $S\subseteq J$, whence $S=J$.
\end{remark}

On $\mathcal{I}(S)$ we define the following equivalence relation: $I\sim J$ if there exists $z\in \mathbb{Z}$ such that $I=z+J$. Clearly, if $I\sim J$ and $I'\sim J'$, then $I+I'\sim J+J'$, and consequently $\sim$ is a congruence. This makes 
\[\mathcal{C}\ell(S)=\mathcal{I}(S)/\sim\] a monoid, which is known as the ideal class monoid of $S$.

\begin{remark}\label{rem:class-semigroup}
    The ideal class monoid of $S$ does not have to be confused with the class semigroup of $S$, \cite[Section~2.8]{g-hk}. As $S$ lives in the free monoid $\mathbb{N}$, we can define the class semigroup, $\mathcal{C}(S,\mathbb{N})$, as the quotient of $\mathbb{N}$ modulo the relation $x\sim y$ if $(-x+S)\cap \mathbb{N}=(-y+S)\cap\mathbb{N}$. Notice that if $c$ is the conductor of $S$ and $x\ge c$, then $(-x+S)\cap \mathbb{N}=\mathbb{N}$. Thus the class of $c$ under the relation $\sim$ is equal to $c+\mathbb{N}$. If $x$ is a non-negative integer less than $c$, then $\max(\mathbb{N}\setminus(-x+S))=F-x$, with $F$ the Frobenius number of $S$. So if $(-x+S)\cap \mathbb{N}=(-y+S)\cap \mathbb{N}$, then their complements in $\mathbb{N}$ are equal, and so $F-x=F-y$, yielding $x=y$. Thus 
    \[\mathcal{C}(S,\mathbb{N})=\{\{0\},\{1\},\dots,\{c-1\},c+\mathbb{N}\}.\]
\end{remark}

Set 
\[
\mathcal{I}_0(S)=\{ E \subset \mathbb{N} : E+S\subseteq E, \min(E)=0\}.
\]
Notice that $(\mathcal{I}_0(S), +)$ is also a monoid (with identity element $S$), and it is isomorphic to $\mathcal{C}\ell(S)$; the isomorphism is $E\mapsto [E]$, since every class $[I]$ of $\mathcal{C}\ell(S)$ contains $-\min(I)+I$, which is in $\mathcal{I}_0(S)$. 

Observe also that, for all $I$ and $J$ in $\mathcal{I}_0(S)$,
\[
I\cup J\subseteq I+J.
\]

\begin{proposition}[\protect{\cite{b-k}}]\label{prop:reduced}
    Let $S$ be a numerical semigroup. The only invertible element of $\mathcal{C}\ell(S)$ is $[S]$. In particular, $\mathcal{C}\ell(S)$ is a group if and only if $S=\mathbb{N}$.
\end{proposition}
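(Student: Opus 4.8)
The plan is to carry out the argument entirely inside the isomorphic monoid $(\mathcal{I}_0(S),+)$, whose identity is $S$, and then translate back. The crux is a single containment observation: for \emph{every} $E\in\mathcal{I}_0(S)$ one has $S\subseteq E$. Indeed, $E\in\mathcal{I}_0(S)$ means $E\subseteq\mathbb{N}$, $E+S\subseteq E$, and $\min(E)=0$, so $0\in E$; hence $S=0+S\subseteq E+S\subseteq E$. This identifies $S$ as the $\subseteq$-minimal element of $\mathcal{I}_0(S)$ and is the engine of the whole proof.

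Next I would set up invertibility. An element $[I]$ of $\mathcal{C}\ell(S)$ is invertible precisely when the normalized representative $E:=-\min(I)+I\in\mathcal{I}_0(S)$ admits $F\in\mathcal{I}_0(S)$ with $E+F=S$ (the identity). Since $F\in\mathcal{I}_0(S)$ gives $0\in F$, we get $E=E+\{0\}\subseteq E+F=S$. Combining this with the containment $S\subseteq E$ from the previous paragraph forces $E=S$, that is, $[I]=[S]$. This shows $[S]$ is the only invertible element of $\mathcal{C}\ell(S)$, which is the first assertion. (One may alternatively invoke $I\cup F\subseteq I+F$ to get $E\subseteq S$, but the direct $0\in F$ argument is cleaner.)

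For the group statement, recall that a monoid is a group exactly when every element is invertible. By the first part this happens if and only if $\mathcal{C}\ell(S)=\{[S]\}$, equivalently $\mathcal{I}_0(S)=\{S\}$. Here the key is that $\mathbb{N}$ is always a member of $\mathcal{I}_0(S)$: it is an ideal of $S$ contained in $\mathbb{N}$ with minimum $0$. Hence $\mathcal{I}_0(S)=\{S\}$ forces $\mathbb{N}=S$. Conversely, if $S=\mathbb{N}$, then any $E\in\mathcal{I}_0(\mathbb{N})$ satisfies both $\mathbb{N}\subseteq E$ (by the containment lemma) and $E\subseteq\mathbb{N}$, so $E=\mathbb{N}$ and $\mathcal{C}\ell(\mathbb{N})$ is the trivial (hence group) one-element monoid.

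I do not expect a serious obstacle here: once the containment $S\subseteq E$ is in place, everything else is immediate set-theoretic manipulation. The only point requiring a moment's care is the reverse implication in the group statement, where one must remember that $\mathbb{N}\in\mathcal{I}_0(S)$ is automatic and is exactly what obstructs triviality whenever $S\neq\mathbb{N}$.
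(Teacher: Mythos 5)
Your argument is correct and is essentially the paper's own proof: the paper also shows $I\subseteq I+J=S$ (using $0\in J$) and $S=0+S\subseteq I$ (using $0\in I$) to force $I=S$, working in $\mathcal{I}_0(S)$. Your added details for the ``in particular'' clause (that $\mathbb{N}\in\mathcal{I}_0(S)$ always, so triviality of the monoid forces $S=\mathbb{N}$) just make explicit what the paper leaves implicit.
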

\begin{proof}
    Let $I$ be an ideal of $\mathcal{I}_0(S)$ such that there exists $J\in \mathcal{I}_0(S)$ with $I+J=S$. We have $I\subseteq I+J=S=0+S\subseteq I$, which forces $I=S$.
\end{proof}

Clearly, for every $I\in \mathcal{I}_0(S)$, since $0\in I$, there is no other minimal generator of $I$ contained in $S$. Hence, there exists $X\subseteq \mathbb{N}\setminus S$ such that $I=(\{0\}\cup X)+S$. In particular, 
\begin{equation}\label{eq:ubound-cS-2genus}
    |\mathcal{C}\ell(S)|\le 2^\mathrm{g},
\end{equation}
which was already shown in \cite{b-k} ($|\cdot|$ denotes cardinality). It is also clear that for every $g\in \mathbb{N}\setminus S$, the ideal $\{0,g\}+S \in \mathcal{I}_0(S)$, and so
\begin{equation}\label{eq:lbound-cS-genus}   
g+1\le |\mathcal{C}\ell(S)|
\end{equation}
(the plus one corresponds to the ideal $S$).

We now recall the notion of Hasse diagram of a poset. Let $(P, \leq) $ be a finite poset, and let $a,b$ be two elements of $P$. We say that $b$ covers $a$ if $a < b$ and there is no $ c \in P$  such that $a < c < b$. The Hasse diagram of $P$ is the graph whose set of vertices is $P$ and whose edges are the covering relations. An antichain of $P$ is a set of pairwise incomparable elements of $P$. The width of $P$ is the biggest cardinality of an antichain of $P$.

Let $S$ be a numerical semigroup and consider the Hasse diagram of the poset $(G(S), \leq_S)$. It has the following properties: 

\begin{itemize}
    \item $\operatorname{Minimals}_{\le_S}(\operatorname{G}(S))=\{1,\dots,\operatorname{m}(S)-1\}$, and so $|\operatorname{Minimals}_{\le_S}(\operatorname{G}(S))|+1=\operatorname{m}(S)$.
    \item $\operatorname{Maximals}_{\le_S}(\operatorname{G}(S))=\operatorname{PF}(S)$, and consequently $|\operatorname{Maximals}_{\le_S}(\operatorname{G}(S))|=\operatorname{t}(S)$.
    \item The width of $(\operatorname{G}(S),\le_S)$ is $\operatorname{m}(S)-1$ because $\operatorname{Minimals}_{\le_S}(\operatorname{G}(S))=\{1,\dots,\operatorname{m}(S)-1\}$ is a maximal antichain (a set with at least $\operatorname{m}(S)$ gaps will contain at least two elements congruent modulo $\operatorname{m}(S)$ and thus they will be comparable via $\le_S$). According to Dilworth's Theorem \cite{d}, there exists a partition of $\operatorname{G}(S)$ into $\operatorname{m}(S)-1$ chains. This partition is easily achieved with the chains $C_i=\{i,i+\operatorname{m}(S),\dots, i+(k_i-1)\operatorname{m}(S)\}$, where $k_i=\min \{ k\in \mathbb{N} : i+km\in S\}$. 
\end{itemize}

The Hasse diagram of $(G(S), \leq_S)$ is related to the study of the ideal class monoid of $S$: if $I\in \mathcal{I}_0(S)$, then its minimal generating set is of the form $\{0\}\cup X$, with $X$ a set of gaps incomparable with respect to $\le_S$. Thus, we obtain the following.

\begin{proposition}\label{prop:card-clS-antichain}
    Let $S$ be a numerical semigroup. The cardinality of $\mathcal{C}\ell(S)$ equals the number of antichains of gaps of $S$ with respect to $\le_S$. 
\end{proposition}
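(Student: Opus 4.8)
The plan is to exhibit an explicit bijection between $\mathcal{I}_0(S)$, which is isomorphic to $\mathcal{C}\ell(S)$, and the collection of antichains of $(\operatorname{G}(S), \le_S)$. Recall from the recap that every ideal of $S$ admits a unique minimal generating set, and that for $I \in \mathcal{I}_0(S)$ this set has the form $\{0\} \cup X$ with $X \subseteq \mathbb{N} \setminus S = \operatorname{G}(S)$. The map I would use is $\Phi \colon I \mapsto X$, sending an ideal in $\mathcal{I}_0(S)$ to the set of its minimal generators other than $0$; the candidate inverse is $\Psi \colon X \mapsto (\{0\} \cup X) + S$.

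First I would check that $\Phi$ lands in the set of antichains. By minimality, no generator $g$ of a minimal generating set $G$ lies in $(G\setminus\{g\})+S$; translating through the definition $a \le_S b \iff b-a\in S$, this says precisely that distinct minimal generators are pairwise $\le_S$-incomparable. Hence the elements of $X$ form an antichain of gaps, and $\Phi$ is well defined.

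Next I would verify that $\Psi$ is well defined and is a two-sided inverse of $\Phi$. That $\Psi(X)\in\mathcal{I}_0(S)$ is immediate, since $(\{0\}\cup X)+S$ is an ideal contained in $\mathbb{N}$ with minimum $0$. The crux is to show that $\{0\}\cup X$ is exactly the minimal generating set of $\Psi(X)$, equivalently that no generator is redundant. The element $0$ is not redundant, being the minimum; and an element $x\in X$ could be redundant only if $x\in S$ (impossible, as $x$ is a gap) or $x-x'\in S$ for some other $x'\in X$ (impossible, since $X$ is an antichain). Given the uniqueness of minimal generating sets recalled above, this yields $\Phi(\Psi(X))=X$ and $\Psi(\Phi(I))=I$.

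Combining these, $\Phi$ and $\Psi$ are mutually inverse bijections, so $|\mathcal{I}_0(S)|$ equals the number of antichains of gaps of $S$; since $\mathcal{C}\ell(S)\cong\mathcal{I}_0(S)$, the claim follows. The only step requiring genuine care is the non-redundancy argument in the definition of $\Psi$, where one must confirm that the antichain hypothesis rules out every way an $x\in X$ could be absorbed into $(\{0\}\cup(X\setminus\{x\}))+S$; everything else reduces to the already-established description of minimal generating sets of ideals in $\mathcal{I}_0(S)$.
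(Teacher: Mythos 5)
Your proposal is correct and takes essentially the same route as the paper: the paper derives this proposition directly from the observation that the minimal generating set of any $I\in\mathcal{I}_0(S)$ has the form $\{0\}\cup X$ with $X$ a set of gaps that are pairwise incomparable with respect to $\le_S$, which is exactly your bijection $\Phi$. Your write-up merely makes explicit the verification (well-definedness of both maps and the mutual-inverse property via uniqueness of minimal generating sets) that the paper leaves implicit.
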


Having in mind that the pseudo-Frobenius numbers are incomparable gaps with respect to the ordering induced by the numerical semigroup, we can find a lower bound for the cardinality of the ideal class monoid.

\begin{proposition}\label{prop:lo-up-bound-clS-genus-type}
    Let $S$ be a numerical semigroup with genus $g$ and type $t$, $S\neq \mathbb{N}$. Then
    \[
    2^t\le |\mathcal{C}\ell(S)| \le 2^g-2^{g-t}+1.
    \]
\end{proposition}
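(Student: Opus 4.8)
The plan is to use Proposition~\ref{prop:card-clS-antichain}, which identifies $|\mathcal{C}\ell(S)|$ with the number of antichains of the poset $(\operatorname{G}(S),\le_S)$, counting the empty set as an antichain (it corresponds to the ideal $S$). The starting observation is that the maximal elements of $(\operatorname{G}(S),\le_S)$ are exactly the $t$ pseudo-Frobenius numbers $\operatorname{PF}(S)$, and that any set of maximal elements of a poset is itself an antichain. Both bounds will be extracted from this antichain-counting reformulation, so in effect I am proving a purely order-theoretic statement about a finite poset with $g$ elements and $t$ maximal elements.

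For the lower bound I would note that every subset of the antichain $\operatorname{PF}(S)$ is again an antichain of gaps, and there are $2^t$ such subsets (including the empty one). Hence there are at least $2^t$ antichains in $(\operatorname{G}(S),\le_S)$, which gives $|\mathcal{C}\ell(S)|\ge 2^t$.

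For the upper bound the cleaner route is to bound the number of \emph{non}-antichains from below, since the total number of subsets of $\operatorname{G}(S)$ is $2^g$ and each subset is either an antichain or not. Let $N=\operatorname{G}(S)\setminus\operatorname{PF}(S)$ be the set of non-maximal gaps, so $|N|=g-t$. The key fact (which uses finiteness of the poset) is that every element of $N$ lies strictly below some maximal element, i.e.\ below some pseudo-Frobenius number. I would then define, for each nonempty $T\subseteq N$, a non-antichain $B_T$ as follows: pick any $y\in T$, choose $f\in\operatorname{PF}(S)$ with $y<_S f$, and set $B_T=T\cup\{f\}$. Since $y<_S f$ and both lie in $B_T$, the set $B_T$ is not an antichain. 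The assignment $T\mapsto B_T$ is injective because $f\in\operatorname{PF}(S)$ is disjoint from $N$, so $B_T\cap N=T$ recovers $T$. This produces $2^{g-t}-1$ distinct non-antichains, whence the number of antichains is at most $2^g-(2^{g-t}-1)=2^g-2^{g-t}+1$, that is, $|\mathcal{C}\ell(S)|\le 2^g-2^{g-t}+1$.

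The only genuinely delicate point is the upper bound, and within it the step of producing enough non-antichains: the idea of counting the complement and building the injection $T\mapsto T\cup\{f\}$ is what makes the factor $2^{g-t}$ appear. Everything else (that $\operatorname{PF}(S)$ is the set of $t$ maximal elements, that subsets of an antichain are antichains, and that each non-maximal gap sits strictly below a pseudo-Frobenius number) is standard and already available from the material recalled above.
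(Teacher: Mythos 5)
Your proof is correct, but your upper-bound argument runs in the opposite direction from the paper's. The paper does not count non-antichains: it maps each ideal $E\in\mathcal{I}_0(S)$ injectively to the set $E\setminus S$ of its gaps and shows that for $E\neq S$ this image always contains a pseudo-Frobenius number (if $x\in E\setminus S$, pick $f\in\operatorname{PF}(S)$ with $x\le_S f$; then $f=x+s\in E\setminus S$ because $E$ is an ideal), so the nontrivial classes land inside the family of subsets of $\operatorname{G}(S)$ meeting $\operatorname{PF}(S)$, which has $2^{g-t}(2^t-1)=2^g-2^{g-t}$ elements; adding back $[S]$ gives the bound. You instead bound the complement: for each nonempty $T\subseteq\operatorname{G}(S)\setminus\operatorname{PF}(S)$ you build a non-antichain $T\cup\{f\}$ and verify that $T\mapsto T\cup\{f\}$ is injective, producing $2^{g-t}-1$ non-antichains among the $2^g$ subsets of gaps. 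Both arguments hinge on the same fact, namely that every gap lies $\le_S$-below a pseudo-Frobenius number, and both counts agree. Your version is purely order-theoretic: it proves that any finite poset with $g$ elements and $t$ maximal elements has at most $2^g-2^{g-t}+1$ antichains, with the semigroup entering only through Proposition~\ref{prop:card-clS-antichain}. The paper's version keeps track of which subsets of gaps actually arise from ideals, and that extra precision is exploited immediately afterwards: the remark following the proposition characterizes the semigroups attaining the upper bound by noting that equality forces every subset of gaps containing a pseudo-Frobenius number, united with $S$, to be an ideal --- a criterion that is much harder to read off from your complementary count. The lower bounds are essentially identical (subsets of the antichain $\operatorname{PF}(S)$).
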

\begin{proof}
Let $f_1,\dots,f_r$ be pseudo-Frobenius numbers of $S$. Then, by \eqref{eq:pF-max}, $\{f_1,\dots,f_r\}$ is an antichain of gaps of $S$ with respect to $\le_S$, and thus $\{0,f_1,\dots,f_r\}$ is a minimal generating set of the ideal $\{0,f_1,\dots,f_r\}+S\in \mathcal{I}_0(S)$. This in particular means that $2^t \le |\mathcal{C}\ell(S)|$. 

For the other inequality, let $\mathcal{P}(\mathbb{N}\setminus S)$ be the set of subsets of gaps of $S$. Consider the following map: $\mathcal{G}:\mathcal{I}_0(S)\to \mathcal{P}(\mathbb{N}\setminus S)$, $E\mapsto E\setminus S$. Clearly, $E=S\cup(E\setminus S)$ and $E\setminus S\subseteq \mathbb{N}\setminus S$, and so $\mathcal{G}$ is injective. Let us prove that if $E\neq S$, then $E\setminus S$ contains at least a pseudo-Frobenius number of $S$. Let $x$ be in $E\setminus S$. Since $\operatorname{PF}(S)=\operatorname{Maximals}_{\le_S}(\mathbb{Z}\setminus S)$, there exists $f\in \operatorname{FP}(S)$ such that $x\le_s f$. Hence there exists $s\in S$ such that $x+s=f$, and consequently $f\in E\setminus S$. This means that the image of $\mathcal{G}$ is included in the set of subsets of gaps of $S$ with at least one pseudo-Frobenius number, and the cardinality of this set is $2^{g-t}(2^t-1)=2^g-2^{g-t}$.
\end{proof}

\begin{remark}
  Let us see that the upper bound in Proposition~\ref{prop:lo-up-bound-clS-genus-type} for $m\ge 3$ is attained if and only if either $S=\{0,m,\to\}$ (here $\to$ means that every integer greater than $m$ is in the semigroup) or $S=\langle m,m+1,\dots,2m-2\rangle$. 

For the sufficiency, notice that $\operatorname{G}(S)$ is either $\{1,\dots,m-1\}$ or $\{1,\dots,m-1,2m-1\}$. In the first case, for every subset $X$ of $\{1,\dots,m-1\}$, we have that $\{0\}\cup X\cup S$ is an ideal of $S$, and so $|\mathcal{I}_0(S)|\ge 2^{m-1}$. Also, the type of $\{0,m,\to\}$ is $m-1$, which equals the genus. So $2^g-2^{g-t}+1=2^{m-1}-2^0+1=2^{m-1}$ and thus the equality holds. For the second case, we have that $\operatorname{G}(S)=\{1,\dots,m-1,2m-1\}$, $\operatorname{PF}(S)=\{2m-1\}$, and $\operatorname{g}(S)=m$. For every subset $X$ of $\{1,\dots,m-1\}$, the set $X\cup\{0,2m-1\}\cup S$ is an ideal of $S$. This makes $2^{m-1}$ elements in $\mathcal{I}_0(S)$ to which we must add $S$ itself, and so we have at least $2^{m-1}+1$ ideals in $\mathcal{I}_0(S)$, and our upper bound in this case is $2^g-2^{g-t}+1=2^{m}-2^{m-1}+1=2^{m-1}+1$; consequently we get an equality once more.

Now let us focus on the necessity. From the proof of Proposition~\ref{prop:lo-up-bound-clS-genus-type}, it follows that the equality $|\mathcal{C}\ell(S)| = 2^g-2^{g-t}+1$ implies that for every subset $X\subseteq \operatorname{G}(S)$, containing at least one pseudo-Frobenius number, the set $X\cup S$ is an ideal of $S$. In particular, $\{i,\operatorname{F}(S)\}\cup S$ is an ideal for every $i\in\{1,\dots,m-1\}$. Thus, $(\{i,\operatorname{F}(S)\}\cup S)+S\subseteq \{i,\operatorname{F}(S)\}\cup S$, which forces $\{i,\operatorname{F}(S)\}+S\subseteq \{i,\operatorname{F}(S)\}\cup S$. Hence $i+m\in \{\operatorname{F}(S)\}\cup S$ for all $i\in \{1,\dots,m-1\}$. Notice that $1+m=\operatorname{F}(S)$ implies that $S=\{0,m,m+2,\to\}$, but then $\operatorname{PF}(S)=\{2,\dots,m-1,m+1\}$, and $\{1,2\}\cup S$ is not an ideal, since $1+m\not\in \{1,2\}\cup S$, contradicting that for any set of gaps $X$ containing at least a pseudo-Frobenius number, the set $X\cup S$ is an ideal of $S$. Hence $m+1\in S$. Now assume that after applying this argument we have that $m,m+1,\dots,m+i-1\in S$ and $m+i=\operatorname{F}(S)$, $i<m-1$. Then $S=\{0,m,m+1,\dots,m+i-1,m+i+1,\to\}$ and $\operatorname{PF}(S)=\{i+1,\dots,m-1,m+i\}$. Notice that $\{1,i+1\}\cup S$ is not an ideal of $S$, because $1+m+i-1=m+i\not\in \{1,i+1\}\cup S$, contradicting again that for any set of gaps $X$ of $S$ containing at least a pseudo-Frobenius number, the set $X\cup S$ is an ideal of $S$. Hence for all $i<m-1$, we obtain $m+i\in S$. Thus it remains to see what happens with $2m-1$ ($i=m+1$). If $2m-1\in S$, then $S=\{0,m,\to\}$, while if $2m-1=\operatorname{F}(S)$, we get $S=\langle m,m+1,\dots,2m-2\rangle$. 

Now notice that all numerical semigroups with $m = 2$ are of the form $S=\langle 2,b \rangle$ where $b$ is an odd integer greater or equal than three. In this case, the Hasse diagram of $\operatorname{G}(S)$ is a chain of length $g$ and thus by Proposition~\ref{prop:card-clS-antichain} the cardinality of $\mathcal{I}_0(S)$ is $g+1$. As $S$ is symmetric, the type of $S$ is one. Observe that $2^g-2^{g-1}+1=g+1$ holds for $g=1$ and $g=2$, that is for $\{0,2,\to\}=\langle 2,3\rangle$ and $\{0,2,4,\to\}=\langle 2,5\rangle$.
\end{remark}

\begin{example}
    Let $c$ be a positive integer and let $S=\{0\}\cup(c+\mathbb{N})$. In this case, the type of $S$ equals the genus of $S$, and every set $X$ of pseudo-Frobenius numbers gives rise to an ideal $I=(\{ 0 \} \cup X)+S$. Then the size of the class monoid is precisely two to the power of the type of $S$ and the minimal bound is attained for $S$.
\end{example}

We can improve a little bit the lower bound given in Proposition~\ref{prop:lo-up-bound-clS-genus-type}. 

\begin{proposition} \label{improved}
    Let $S$ be a numerical semigroup with multiplicity $m$ and genus $g$. Then
    \[ 2^{m-1}+g-m+1\le |\mathcal{C}\ell(S)|.
    \]
\end{proposition}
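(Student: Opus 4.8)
The plan is to count two disjoint families of elements of $\mathcal{I}_0(S)$ and add their cardinalities. The first family comes from the gaps that are smaller than the multiplicity: since $\operatorname{Minimals}_{\le_S}(\operatorname{G}(S))=\{1,\dots,m-1\}$ is an antichain with respect to $\le_S$, every subset $X\subseteq\{1,\dots,m-1\}$ is itself an antichain of gaps, and hence $\{0\}\cup X$ is the minimal generating set of an ideal in $\mathcal{I}_0(S)$ by Proposition~\ref{prop:card-clS-antichain}. Distinct subsets give distinct ideals, so this already produces $2^{m-1}$ elements of $\mathcal{I}_0(S)$. I would take care to note that these are exactly the ideals $I$ with $\operatorname{G}(I):=I\setminus S$ contained in $\{1,\dots,m-1\}$, a property that I will use to guarantee disjointness with the second family.

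The second family should account for the remaining gaps, those that are at least $m$; there are $g-(m-1)$ of them. For each such gap $f\ge m$, I would associate the principal-looking ideal $\{0,f\}+S$, which lies in $\mathcal{I}_0(S)$ by the discussion preceding \eqref{eq:lbound-cS-genus}. The point of choosing a single extra generator $f\ge m$ is that the corresponding ideal $\{0,f\}+S$ has a gap (namely $f$, or possibly a gap $\ge m$ forced by $f$) that is $\ge m$, and therefore its set of gaps is not contained in $\{1,\dots,m-1\}$. This makes each such ideal distinct from all $2^{m-1}$ ideals of the first family. I would also argue that distinct gaps $f,f'\ge m$ give distinct ideals $\{0,f\}+S\ne\{0,f'\}+S$: this holds because $f$ and $f'$ are the unique minimal generators of these ideals other than $0$ (each being incomparable to $0$ and being a gap), so the ideals determine them.

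Putting the two families together yields at least $2^{m-1}+(g-m+1)$ distinct elements of $\mathcal{I}_0(S)$, and since $\mathcal{I}_0(S)\cong\mathcal{C}\ell(S)$ this gives the claimed bound $2^{m-1}+g-m+1\le|\mathcal{C}\ell(S)|$.

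The main obstacle is the disjointness and distinctness bookkeeping for the second family. The subtle point is ensuring that an ideal $\{0,f\}+S$ with $f\ge m$ genuinely has a gap $\ge m$: it is conceivable that adding $f$ together with the whole of $S$ fills in all integers above $m-1$, so I must verify that $f$ itself remains a gap of the ideal (equivalently, that $f\notin S$ and $f$ is not absorbed, which is immediate since $f\in\{0,f\}+S\setminus S$ and $f\ge m$). I would also double-check that a single gap $f\ge m$ is always incomparable to $0$ under $\le_S$ so that $\{0,f\}$ is indeed a minimal generating set, which follows because $f\notin S$ means $f-0=f\notin S$. Once these elementary verifications are in place, the counting is routine and the two families are seen to be disjoint precisely because membership of a gap $\ge m$ distinguishes them.
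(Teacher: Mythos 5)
Your two families are the same ones the paper uses (via Proposition~\ref{prop:card-clS-antichain}: the antichains contained in $\{1,\dots,m-1\}$ and the singleton antichains $\{f\}$ with $f$ a gap larger than $m$), and your count $2^{m-1}+(g-m+1)$ is right, but your disjointness step rests on a false claim. You assert that the ideals $(\{0\}\cup X)+S$ with $X\subseteq\{1,\dots,m-1\}$ are \emph{exactly} the ideals $I\in\mathcal{I}_0(S)$ with $I\setminus S\subseteq\{1,\dots,m-1\}$, and you then separate the second family from the first by exhibiting an element of $I\setminus S$ that is $\ge m$. Only one inclusion of your claimed characterisation holds: adding a gap $x<m$ to $S$ can create elements $x+s\notin S$ with $s\in S$ that lie above the multiplicity. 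For instance, take $S=\langle 2,5\rangle=\{0,2,4,5,\to\}$, so $m=2$ and $\operatorname{G}(S)=\{1,3\}$; the first-family ideal $\{0,1\}+S=\mathbb{N}$ satisfies $\mathbb{N}\setminus S=\{1,3\}\not\subseteq\{1\}$. Similarly, for $S=\langle 3,7,8\rangle$ the first-family ideal $\{0,1\}+S$ contains $4=1+3\notin S$. So ``containing an element of $I\setminus S$ that is $\ge m$'' does not distinguish second-family ideals from first-family ones, and the disjointness argument as written fails.

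The repair is immediate, and it is what the paper's one-line proof does implicitly: work with antichains (equivalently, with the unique minimal generating set $\{0\}\cup X$ of an ideal, $X$ an antichain of gaps) rather than with the set $I\setminus S$. Under the bijection of Proposition~\ref{prop:card-clS-antichain}, your first family corresponds to the antichains $X\subseteq\{1,\dots,m-1\}$ and your second family to the singletons $\{f\}$ with $f$ a gap, $f>m$; since $\{f\}\not\subseteq\{1,\dots,m-1\}$, all these antichains are pairwise distinct, hence so are the corresponding ideals, and the bound follows. Note that you already invoke exactly this principle to get distinctness \emph{within} the second family (recovering $f$ from $\{0,f\}+S$ as its unique non-zero minimal generator); you only need to apply it across the two families as well, instead of the erroneous gap-set criterion.
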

\begin{proof}
    The proof follows by considering antichains in the Hasse diagrams of $\operatorname{G}(S)$ of the form $\{x\}$ with $x$ a gap larger than $m$, and the antichains formed by gaps in $\{1,\dots,m-1\}$.
\end{proof}

This lower bound is better since, by \eqref{eq:pF-max}, the type of a numerical semigroup is at most its multiplicity minus one.

\begin{example}
    There are three numerical semigroups of genus 10 attaining this bound: $\langle 11,\dots, 21 \rangle$, $\langle  10, 11, 12, 13, 14, 15, 16, 17, 18 \rangle$, and $\langle 2, 21 \rangle$.
\end{example}

\section{Apéry sets}
Denote by $m$ the multiplicity of $S$ and define 
\[
\operatorname{Ap}(E,m)=\{w_0(E),\dots,w_{m-1}(E)\},
\]
where $w_i(E)$ is the minimum element in $E$ congruent with $m$ modulo $i$. It follows easily that \[\operatorname{Ap}(E,m)=\{ e\in E : e-m\not\in S\}\]
and that $E=\operatorname{Ap}(E,m)+S$. Notice that the definition of Apéry set for an ideal is slightly different to that given in \cite{v1f}.

\begin{remark}
Let $I,J\in \mathcal{I}_0(S)$. Then $I\cap J$ and $I\cup J$ are also in $\mathcal{I}_0(S)$. Moreover,
\[
\Ap(I\cap J,m)= \{ 0, \max\{w_1(I),w_1(J)\},\dots, \max\{w_{m-1}(I),w_{m-1}(J)\}\},
\]
and 
\[
\Ap(I\cup J,m)= \{ 0, \min\{w_1(I),w_1(J)\},\dots, \min\{w_{m-1}(I),w_{m-1}(J)\}\}.
\]
Also $I\subseteq J$ if and only if, component-wise,  $(w_0(J),\dots,w_{m-1}(J))\le (w_0(I),\dots, w_{m-1}(I))$.
\end{remark}

\begin{example}
Let $S=\langle 5,7,9\rangle$, $I=\{0,2\}+S$ and $J=\{0,3,4\}+S$. Then 
\begin{itemize}
    \item $\operatorname{Ap}(I,5)=\{ 0, 11, 2, 18, 9 \}$,
    \item $\operatorname{Ap}(J,5)=\{ 0, 11, 7, 3, 4\}$,
    \item $\operatorname{Ap}(I\cap J,5)=\{ 0, 11, 7, 18, 9\}$,
    \item $\operatorname{Ap}(I\cup J,5)=\{ 0, 11, 2, 3, 4\}$.
\end{itemize}
\end{example}

Notice that $S$ is itself an ideal of $S$, and that $\Ap(S,m)$ coincides with the Apéry set of $m$ in $S$ in the usual sense. It is well known that every element $s$ in $S$ can be expressed uniquely as $s=km+w$ with $k\in \mathbb{N}$ and $w\in \Ap(S,m)$. The following result characterizes those sets that are Apéry sets of ideals of a numerical semigroup (see \cite[Lemma~8]{kc} for a similar result for numerical semigroups). 

\begin{lemma}\label{lem:ap}
Let $S$ be a numerical semigroup with multiplicity $m$ and let $A=\{w_0=0,w_1,\dots,w_{m-1}\}\subset \mathbb{N}$ be such that $w_i\equiv i \pmod m$ for all $i\in \{0,\dots,m-1\}$. Then $A=\Ap(E,m)$ for some ideal $E$ of $S$ with $\min(E)=0$ if and only if for all $i,j\in\{0,\dots,m-1\}$, we have $w_i+w_j(S)\ge w_{(i+j)\!\mod m}$.
\end{lemma}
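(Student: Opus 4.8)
The plan is to prove both implications through the candidate ideal $E = A + S = \bigcup_{i=0}^{m-1}(w_i + S)$, exploiting the unique decomposition $s = km + w_l(S)$ of an element $s \in S$ into $k \in \mathbb{N}$ and $w_l(S) \in \Ap(S,m)$ recalled just before the statement.

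For necessity, I would assume $A = \Ap(E,m)$ for an ideal $E$ with $\min(E)=0$ and fix $i,j$. Since $w_i \in A \subseteq E$ and $w_j(S) \in S$, the ideal property $S + E \subseteq E$ gives $w_i + w_j(S) \in E$. This element lies in the congruence class $(i+j)\bmod m$ modulo $m$, so it is at least the minimal element of $E$ in that class, which is by definition $w_{(i+j)\bmod m}(E) = w_{(i+j)\bmod m}$. This yields $w_i + w_j(S) \ge w_{(i+j)\bmod m}$.

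For sufficiency, I would assume the inequalities and set $E = A + S$. First one checks $E \in \mathcal{I}_0(S)$: indeed $E + S = A + S + S = A + S = E$, and taking $s$ to be the conductor $c$ of $S$ gives $c + E \subseteq S$, since every element of $c+E$ is at least $c$; moreover $0 = w_0 \in E$ and $E \subseteq \mathbb{N}$ force $\min(E)=0$. It then remains to identify $\Ap(E,m)$ with $A$. As each $w_i \in E$ has residue $i$, one gets $w_i(E) \le w_i$ immediately. For the reverse inequality I would take an arbitrary element of $E$ of residue $i$, write it as $w_j + s$ with $s \in S$, and decompose $s = km + w_l(S)$. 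Then $j + l \equiv i \pmod m$, and the hypothesis gives $w_j + w_l(S) \ge w_{(j+l)\bmod m} = w_i$, whence $w_j + s = w_j + w_l(S) + km \ge w_i$. Thus $w_i$ is the smallest element of $E$ in its residue class, so $\Ap(E,m) = A$.

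The main obstacle is this last inequality in the sufficiency part, namely verifying that no element of $E$ in a given residue class falls below the prescribed $w_i$; this is exactly where the hypothesis is consumed, and the decomposition $s = km + w_l(S)$ is the mechanism that reduces a general element $w_j + s$ to a single application of the assumed inequalities. By comparison, the necessity direction and the verification that $E$ is an ideal with $\min(E)=0$ are routine.
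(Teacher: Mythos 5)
Your proof is correct and follows essentially the same route as the paper's: the necessity argument is identical, and the sufficiency argument uses the same candidate $E=A+S$, the same decomposition $s=km+w_l(S)$, and the same single application of the hypothesis. The only differences are cosmetic --- you bound an arbitrary element of $E$ in residue class $i$ where the paper bounds $w_i(E)$ directly, and you spell out the (routine) verification that $A+S\in\mathcal{I}_0(S)$, which the paper leaves implicit.
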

\begin{proof}
For sake of simplicity, denote $\overline{i+j}=(i+j)\mod m$.

Suppose that $E$ is an ideal of $S$ with $\min(E)=0$. Then $w_i(E)+w_j(S)\in E$, and consequently $w_i(E)+w_j(S)\ge w_{\overline{i+j}}(E)$. 
Now suppose that $A$ is a set fulfilling the inequalities of the statement. Let $E=A+S$. Then $w_i(E)\le w_i$ for all $i\in \{0,\dots,m-1\}$. As $w_i(E)\in E$, we have that $w_i(E)=w_j+s$ for some $j\in \{0,\dots,m-1\}$ and $s\in S$. There exists $t\in \mathbb{N}$ and $k\in \{0,\dots,m-1\}$ such that $s=tm+w_k(S)$, whence $w_i(E)=w_j+t m+w_k(S)$. By the standing hypothesis $w_j+w_k(S)\ge w_{\overline{j+k}}$. Notice that $\overline{j+k}=i$, and so $w_i(E)\ge t m + w_{i}\ge w_i$, forcing $w_i=w_i(E)$. 
\end{proof}

Let $S$ be a numerical semigroup with multiplicity $m$, and let $E$ be an ideal of $S$ with $\min(E)=0$. Recall that the Kunz coordinates of $S$ are the $(m-1)$-uple $(k_1(S),\dots,k_{m-1}(S))$ such that that $w_i(S)=k_i(S)m+i$ for all $i\in \{1,\dots,m-1\}$. We can proceed similarly with $E$.
For every $i\in \{0,\dots,m-1\}$, there exists $k_i(E)\in \mathbb{N}$ such that $w_i(E)=k_i(E)m+i$. The $(m-1)$-uple $(k_1(E),\dots,k_{m-1}(E))$ is known as the Kunz coordinates of $E$.

The inequalities in Lemma~\ref{lem:ap} imply that for all $i,j\in \{0,\ldots,m-1\}$, $w_i(E)+w_j(S)\ge w_{\overline{i+j}}(E)$, and so 
\begin{itemize}
    \item $k_i(E)\le k_i(S)$,
    \item $k_i(E)+k_j(S)\ge k_{i+j}(E)$ if $i+j<m$,
    \item $k_i(E)+k_j(S)\ge k_{i+j-m}(E)-1$ if $i+j\ge m$.
\end{itemize}
The first inequality follows from $w_0(E)+w_j(S)\ge w_{j}(E)$. Notice that we have the correspondence $w_i(E)=k_i(E)m+i$. So if $E$ is an ideal, its Kunz coordinates will fulfill the above inequalities, and if we have a tuple $(k_1,\dots,k_{m-1})$ fulfilling these inequalities, then the set $\{0,k_1m+1,\dots, k_{m-1}m+m-1\}$ is the Apéry set of an ideal by Lemma~\ref{lem:ap} (actually the ideal $\{0,k_1m+1,\dots, k_{m-1}m+m-1\}+S$).
In light of the above discussion we can state the following result.

\begin{theorem}\label{kunz}
Let $S$ be a numerical semigroup with multiplicity $m$ and Kunz coordinates $(k_1,\dots,k_{m-1})$. The set of ideals $E$ of $S$ with $\min(E)=0$ are in one-to-one correspondence with the set $\mathcal{K}(S)$ of solutions of the following system of inequalities over the set of non-negative integers:
\[
\begin{array}{l}
x_i\le k_i, \text{ for all } i\in \{1,\dots,m-1\},\\
x_{i+j}-x_i\le k_j, \text{ for every } i,j\in\{1,\dots,m-1\}, i+j<m \\
x_{i+j-m}-x_i\le k_j+1, \text{ for every } i,j\in\{1,\dots,m-1\}, i+j > m.
\end{array}
\]
\end{theorem}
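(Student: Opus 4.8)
The plan is to exhibit the explicit map $E\mapsto (k_1(E),\dots,k_{m-1}(E))$ sending an ideal $E$ with $\min(E)=0$ to its Kunz coordinates, and to show that it is a bijection onto $\mathcal{K}(S)$. The whole argument rests on Lemma~\ref{lem:ap} together with the dictionary, already worked out in the discussion preceding the statement, between the Apéry inequalities $w_i+w_j(S)\ge w_{\overline{i+j}}$ (where $\overline{i+j}=(i+j)\bmod m$) and the three families of inequalities defining $\mathcal{K}(S)$.

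First I would record that the map is well defined and injective. Since $E=\Ap(E,m)+S$ and $\Ap(E,m)=\{0,w_1(E),\dots,w_{m-1}(E)\}$ is recovered from the tuple via $w_i(E)=k_i(E)m+i$, an ideal with $\min(E)=0$ is completely determined by its Kunz coordinates, so the map is injective. To see that the image lies in $\mathcal{K}(S)$, I would invoke the ``only if'' direction of Lemma~\ref{lem:ap}: the Apéry set of $E$ satisfies $w_i(E)+w_j(S)\ge w_{\overline{i+j}}(E)$ for all $i,j$, and substituting $w_\ell(E)=k_\ell(E)m+\ell$ and $w_j(S)=k_j(S)m+j$ and cancelling the multiples of $m$ yields exactly the three families (the case $i=0$ gives $x_i\le k_i$; the case $i+j<m$ gives $x_{i+j}-x_i\le k_j$; and the case $i+j>m$, where the reduction $\overline{i+j}=i+j-m$ produces the extra summand, gives $x_{i+j-m}-x_i\le k_j+1$).

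For surjectivity, given a solution $(x_1,\dots,x_{m-1})\in\mathcal{K}(S)$, I would set $w_0=0$ and $w_i=x_im+i$ and run the same substitution in reverse: each defining inequality of $\mathcal{K}(S)$ becomes one of the inequalities $w_i+w_j(S)\ge w_{\overline{i+j}}$ of Lemma~\ref{lem:ap}, while the remaining cases ($j=0$, or $i+j=m$ with $w_0=0$) hold trivially. By the ``if'' direction of Lemma~\ref{lem:ap}, the set $A=\{w_0,\dots,w_{m-1}\}$ equals $\Ap(E,m)$ for the ideal $E=A+S$, whose Kunz coordinates are by construction $(x_1,\dots,x_{m-1})$; hence the map is onto $\mathcal{K}(S)$.

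The only delicate point, and the step I would treat most carefully, is the bookkeeping of the modular reduction $\overline{i+j}$. One must check that $i+j=m$ contributes no constraint (since then $\overline{i+j}=0$ and $w_0=0$), so that the finite family $\mathcal{K}(S)$ loses none of the genuine inequalities of Lemma~\ref{lem:ap}, and that the passage from index $i+j$ to index $i+j-m$ is precisely what turns $k_j$ into $k_j+1$ on the right-hand side. Once this case analysis is pinned down, both directions of Lemma~\ref{lem:ap} apply verbatim and the stated one-to-one correspondence follows.
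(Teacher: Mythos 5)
Your proposal is correct and follows essentially the same route as the paper: the paper's justification of Theorem~\ref{kunz} is precisely the discussion preceding it, which translates the Apéry-set inequalities of Lemma~\ref{lem:ap} into the three families of Kunz-coordinate inequalities (noting, as you do, that the cases $j=0$ and $i+j=m$ are vacuous) and uses $E=\operatorname{Ap}(E,m)+S$ for injectivity and the converse direction of Lemma~\ref{lem:ap} for surjectivity. Your write-up is, if anything, more explicit about the case bookkeeping than the paper's.
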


In particular, from the first set of inequalities we get the following bound.

\begin{corollary}\label{cor:bound-ideals-kunz}
Let $S$ be a numerical semigroup with multiplicity $m$ and Kunz coordinates $(k_1,\dots,k_{m-1})$. Then \[|\mathcal{I}_0(S)|\le \prod_{i=1}^{m-1} (k_i+1).\]
Equality holds  if and only if $S=\langle m\rangle \cup (c+\mathbb{N})$, with $c$ a positive integer greater than $m$, that is,
\begin{enumerate}
    \item $k_1\ge \dots \ge k_{m-1}$, and
    \item $k_1-k_{m-1}\le 1$.
\end{enumerate}
\end{corollary}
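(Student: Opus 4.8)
The plan is to read off both the bound and the equality case directly from the polyhedral description furnished by Theorem~\ref{kunz}. By that theorem, $\mathcal{I}_0(S)$ is in bijection with the set $\mathcal{K}(S)$ of non-negative integer solutions $(x_1,\dots,x_{m-1})$ of the three displayed families of inequalities. The first family, $x_i\le k_i$, confines $\mathcal{K}(S)$ to the box $B=\{0,1,\dots,k_1\}\times\cdots\times\{0,1,\dots,k_{m-1}\}$, which has exactly $\prod_{i=1}^{m-1}(k_i+1)$ lattice points. Since $\mathcal{K}(S)\subseteq B$, the upper bound $|\mathcal{I}_0(S)|=|\mathcal{K}(S)|\le\prod_{i=1}^{m-1}(k_i+1)$ is immediate, and equality holds precisely when $\mathcal{K}(S)=B$, i.e. when every point of $B$ already satisfies the two remaining families.

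First I would determine when each of the remaining inequalities is redundant relative to $B$. Consider a second-family inequality $x_{i+j}-x_i\le k_j$ with $i+j<m$. Since $j\ge 1$, the indices $i+j$ and $i$ are distinct and both lie in $\{1,\dots,m-1\}$, so over $B$ the coordinates $x_{i+j}$ and $x_i$ vary independently and the left-hand side is maximised at the corner $x_{i+j}=k_{i+j}$, $x_i=0$. Hence this inequality holds on all of $B$ if and only if $k_{i+j}\le k_j$. Letting $a=j$ and $b=i+j$ range over all pairs with $1\le a<b\le m-1$, this is exactly the statement that $(k_1,\dots,k_{m-1})$ is non-increasing, which is condition~(1). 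Likewise, a third-family inequality $x_{i+j-m}-x_i\le k_j+1$ with $i+j>m$ is maximised over $B$ at $x_{i+j-m}=k_{i+j-m}$, $x_i=0$, so it holds on $B$ if and only if $k_{i+j-m}\le k_j+1$; letting $d=i+j-m$ and $c=j$ range over all pairs with $1\le d<c\le m-1$ yields $k_d-k_c\le 1$. Under~(1) the sequence is non-increasing, so the largest such difference is $k_1-k_{m-1}$, and this family is redundant exactly when $k_1-k_{m-1}\le 1$, which is condition~(2). Combining, $\mathcal{K}(S)=B$ if and only if both (1) and (2) hold.

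It then remains to identify the semigroups satisfying (1) and (2) with those of the form $\langle m\rangle\cup(c+\mathbb{N})$, $c>m$. For one direction, given $S=\langle m\rangle\cup(c+\mathbb{N})$ I would write $c=qm+r$ with $q\ge 1$ (as $c>m$) and $0\le r<m$, and compute $w_i(S)=(q+1)m+i$ for $1\le i<r$ and $w_i(S)=qm+i$ for $r\le i\le m-1$; this gives a non-increasing Kunz tuple taking only the values $q$ and $q+1$, so (1) and (2) hold. Conversely, from (1) and (2), and using that $k_i\ge 1$ for every $i\in\{1,\dots,m-1\}$ (these being gaps of $S$), I would set $q=k_{m-1}$ and let $r$ be the least index with $k_r=q$; then $k_i=q+1$ for $i<r$ and $k_i=q$ for $i\ge r$, and reconstructing $S$ from $\Ap(S,m)=\{0,w_1,\dots,w_{m-1}\}$ shows $S=\langle m\rangle\cup(c+\mathbb{N})$ with $c=qm+r>m$.

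The main obstacle is the redundancy analysis in the second paragraph: the argument rests on the observation that each extra inequality involves two \emph{distinct} coordinates, so its worst case over the box is the explicit corner described above, turning ``redundant on $B$'' into the clean comparisons $k_{i+j}\le k_j$ and $k_{i+j-m}\le k_j+1$; and on the remark that, once (1) is known, the whole third family collapses to the single bound $k_1-k_{m-1}\le 1$. The semigroup identification is then routine bookkeeping, the only point requiring mild care being the boundary semigroup $\{0,m,\to\}$ (all $k_i=1$), whose conductor equals $m$ but which is nonetheless of the required form via $c=m+1$.
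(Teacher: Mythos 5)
Your proposal is correct and takes essentially the same route as the paper: both get the bound from the box constraint $0\le k_i(E)\le k_i$ supplied by Theorem~\ref{kunz}, and both characterise equality by deciding when every tuple in the box satisfies the two remaining families of inequalities, your ``worst corner'' points being precisely the paper's test tuples $(0,\dots,0,k_{i+1},0,\dots,0)$ and $(k_1,0,\dots,0)$. The only difference is one of packaging: you carry out the redundancy analysis and the identification of the semigroups $\langle m\rangle\cup(c+\mathbb{N})$ in full detail, where the paper argues with specific tuples and leaves the identification as an easy observation.
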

\begin{proof}
The inequality follows directly from $0\le k_i(E)\le k_i(S)$. So, let us focus on when the equality holds.

Observe that if $S=\{0,m,2m,\dots, km, c,\to\}$ and we write $c=km+i$ for $i\in\{0,\dots,m-1\}$, we obtain $k_i=\dots=k_{m-1}=k$, and $k_1=\dots=k_{i-1}=k+1$. The converse is also easy to prove.

\emph{Necessity}. Notice that $|\mathcal{I}_0(S)|=\prod_{i=1}^{m-1}(k_i+1)$ forces every tuple $(k_1',\dots,k_{m-1}')$ with $0\le k_i'\le k_i$ for all $i$ to be the Kunz coordinates of an ideal of $S$. In particular, the tuple $(0,\dots,0,k_{i+1},0,\dots,0)$ ($k_{i+1}$ in the $(i+1)$th coordinate and zero in the rest) corresponds with the Kunz coordinates of some ideal $E$ of $S$. Then $k_1(E)+k_i(S)=0+k_i\ge k_{i+1}(E)=k_{i+1}$, that is, $k_i\ge k_{i+1}$. This proves that $k_1\ge \dots \ge k_{m-1}$.

As $(k_1,0,\dots,0)$ are the Kunz coordinates of some ideal of $S$, say $E$, we deduce that $k_2(E)+k_{m-1}(S)\ge k_1(E)-1$, that is, $k_{m-1}\ge k_1(E)-1$, and consequently $k_1-k_{m-1}\le 1$, proving in this way the second assertion.

\emph{Sufficiency}. Let $(k_1',\dots,k_{m-1}')$ be a tuple with $k_i'\le k_i$ for all $i$. Notice that $k_i'+k_j\ge k_i'\ge k_{i+j}'$ if $i+j<m$. If $i+j>m$, then $k_i'+k_j\ge k_{i+j-m}'-1$ if and only if $k_j\ge k_{i+j-m}'-k_i'-1$, which clearly holds since $|k_{i+j-m}'-k_i'|\le 1$.  
\end{proof}

\begin{example}
Let $S=\langle 5,6,8,9\rangle$. The Kunz coordinates of $S$ are $(1,2,1,1)$, and so Corollary~\ref{cor:bound-ideals-kunz} ensures that the number of elements in $\mathcal{I}_0(S)$ is at most 24. The cardinality of $\mathcal{I}_0(S)$ is 20. 
For $S=\langle 3,5,7\rangle$, with Kunz coordinates $(2,1)$, the bound is sharp.
\end{example}

\begin{remark}
Notice that the genus of $S$ is precisely $k_1+\dots+k_{m-1}$. Thus $2^g=\prod_{i=1}^{m-1}2^{k_i}\ge\prod_{i=1}^{m-1} (k_i+1)$.
\end{remark}

Let $S$ be a semigroup with multiplicity $m$, and let $E\in \mathcal{I}_0(S)$. From now on, and in order to ease the notation, for every integer $i$, we will write 
\[ 
w_i(E)=w_{i\,\mathrm{mod}\, m}(E),
\]
and we will do the same with the elements in the Apéry set of $S$ with respect to $m$.

\begin{proposition}\label{prop:ap-sum}
Let $S$ be a numerical semigroup with multiplicity $m$, and let $I$, $J$ be two ideals in $\mathcal{I}_0(S)$. Then, for every $i\in\{0,\dots,m-1\}$,
\[
w_i(I+J)=\min\{ w_{i_i}(I)+w_{i_2}(J) : i_1,i_2\in \{0,\dots,m-1\}, i_i+i_2\equiv i \pmod m\}.
\]
\end{proposition}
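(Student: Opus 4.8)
The plan is to prove the two inequalities separately, using only the definition of the Apéry coordinates $w_i$ together with the set-theoretic description $I+J=\{a+b : a\in I,\ b\in J\}$. Throughout, I rely on the defining property that $w_j(E)=\min\{e\in E : e\equiv j \pmod m\}$, so that $w_j(E)$ is a lower bound for every element of $E$ lying in the residue class $j$.

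For the inequality $w_i(I+J)\le \min\{\cdots\}$, I would fix an arbitrary admissible pair $(i_1,i_2)$, meaning $i_1,i_2\in\{0,\dots,m-1\}$ with $i_1+i_2\equiv i \pmod m$. Since $w_{i_1}(I)\in I$ and $w_{i_2}(J)\in J$, their sum $w_{i_1}(I)+w_{i_2}(J)$ lies in $I+J$ and is congruent to $i_1+i_2\equiv i$ modulo $m$. By minimality of $w_i(I+J)$ in its class, we get $w_i(I+J)\le w_{i_1}(I)+w_{i_2}(J)$. As this holds for every admissible pair, taking the minimum over all of them yields $w_i(I+J)\le \min\{\cdots\}$.

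For the reverse inequality, I would start from the element $w_i(I+J)$ itself. Because it belongs to $I+J$, we may write $w_i(I+J)=a+b$ with $a\in I$ and $b\in J$. Setting $i_1=a \bmod m$ and $i_2=b \bmod m$, we have $i_1,i_2\in\{0,\dots,m-1\}$ and $i_1+i_2\equiv a+b\equiv i \pmod m$, so $(i_1,i_2)$ is admissible. The key elementary observation is that any element of an ideal dominates the Apéry coordinate of its own class: since $a\in I$ lies in the class $i_1$, we have $a\ge w_{i_1}(I)$, and likewise $b\ge w_{i_2}(J)$. Hence $w_i(I+J)=a+b\ge w_{i_1}(I)+w_{i_2}(J)\ge \min\{\cdots\}$, and combining the two inequalities gives the claimed equality.

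I do not expect a serious obstacle here, since the statement reduces to the set-theoretic form of $I+J$ and the defining minimality of the $w_i$. The only points requiring a line of care are the decomposition $w_i(I+J)=a+b$ (which is precisely the definition of the sum ideal) and the fact that $w_i(I+J)$ is well defined, i.e.\ that $I+J$ meets every residue class modulo $m$; this holds because $I+J$ is again an ideal in $\mathcal{I}_0(S)$ with minimum $0+0=0$. I would also remark in passing that although $i_1+i_2$ may exceed $m-1$, only the congruence condition $i_1+i_2\equiv i \pmod m$ is used, so no reduction of the index modulo $m$ is needed in the argument.
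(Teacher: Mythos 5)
Your proof is correct, and for the harder inequality it takes a genuinely different and more economical route than the paper. The first inequality, $w_i(I+J)\le w_{i_1}(I)+w_{i_2}(J)$ for every admissible pair, is argued identically in both proofs. For the reverse inequality, the paper does \emph{not} use the bare decomposition $w_i(I+J)=a+b$ with $a\in I$, $b\in J$; instead it writes $w_i(I+J)=w_{i_1}(I)+s_1+w_{i_2}(J)+s_2$ via the factorizations $I=\Ap(I,m)+S$ and $J=\Ap(J,m)+S$, then expands $s_1,s_2$ through the Apéry set of $S$ and invokes Lemma~\ref{lem:ap} (the inequalities $w_i(E)+w_j(S)\ge w_{(i+j)\bmod m}(E)$) to absorb the semigroup parts back into Apéry coordinates of $I$ and $J$. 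Your key observation --- that any $a\in I$ satisfies $a\ge w_{a\bmod m}(I)$ simply because $w_{a \bmod m}(I)$ is defined as the minimum of its congruence class in $I$ --- short-circuits all of this: it makes Lemma~\ref{lem:ap} and the decomposition through $\Ap(\cdot,m)+S$ unnecessary, and it handles the index bookkeeping ($i_1+i_2$ possibly exceeding $m-1$) transparently, since only the congruence matters. What the paper's route buys is that it stays entirely inside the Apéry/Kunz formalism that Lemma~\ref{lem:ap} sets up and that the subsequent discussion (the operation induced on $\mathcal{K}(S)$) exploits; what your route buys is a shorter, more elementary argument resting only on the definition of $w_i$ as a class-wise minimum and the set-theoretic definition of $I+J$. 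Your side remarks on well-definedness (that $I+J\in\mathcal{I}_0(S)$, so it meets every residue class modulo $m$) are accurate and worth the line they take.
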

\begin{proof}
Notice that if $i_1+i_2\equiv i \pmod m$, then $w_{i_1}(I)+w_{i_2}(J)\equiv i \pmod m$. Hence $w_{i_1}(I)+w_{i_2}(J)\ge w_i(I+j)$. As $w_i(I+J)\in I+J$, and $I=\Ap(I,m)+S$ and $J=\Ap(J,m)+S$, there exists $s_1,s_2\in S$ and $i_1,i_2\in \{0,\dots,m-1\}$ such that $w_i(I+J)=w_{i_1}(I)+s_1+w_{i_2}(J)+s_2$. Let $j_1,j_2\in \{0,\ldots,m-1\}$ and $t_1,t_2\in \mathbb{N}$ be such that $s_1=t_1m+w_{j_1}(S)$ and $s_2=t_2m+w_{j_2}(S)$. Then $w_i(I+j)=w_{i_1}(I)+w_{j_1}(S)+w_{i_2}(J)+w_{j_2}(S)+(t_1+t_2)m$. By using Lemma~\ref{lem:ap}, we deduce that $w_i(I+J)\ge w_{i_1+j_1}(I)+w_{i_2+j_2}(J)+(t_1+t_2)m\ge w_{i_1+j_1}(I)+w_{i_2+j_2}(J)$. Notice that $i\equiv (i_1+j_1+i_2+j_2) \mod m$, and so  $w_i(I+J)\ge \min\{ w_{i_i}(I)+w_{i_2}(J) : i_1,i_2\in \{0,\dots,m-1\}, i_i+i_2\equiv i \pmod m\}$.
\end{proof}

With this proposition, one can endow $\mathcal{K}(S)$ with the operation $x+x'=:y$ with $y_i=\min\{ x_j+x_k'-(j+k-i)/m : j,k\in \{0,\dots,m-1\}, j+k\equiv i \mod m \}$. This monoid is hence isomorphic to $\mathcal{I}_0(S)$.

\begin{example}
Let $S=\langle 3,5,7\rangle$. The following table is the addition table of $\mathcal{I}_0(S)$. We represent each ideal by its Apéry set with respect to the multiplicity of $S$. The identity element (and the minimum) is $0+S=\{0,7,5\}+S$, while $\{0,1,2\}+S=\mathbb{N}$ is the maximal ideal and acts as sink or infinity, that is, every time we add an ideal to $\mathbb{N}$ we obtain again $\mathbb{N}$. 
\[
\begin{array}{l|llllll}
      +      & \{ 0, 7, 5 \} & \{ 0, 4, 5 \} & \{ 0, 7, 2 \} & \{ 0, 1, 5 \} & \{ 0, 4, 2 \} & \{ 0, 1, 2 \}\\ \hline
\{ 0, 7, 5 \} & \{ 0, 7, 5 \} & \{ 0, 4, 5 \} & \{ 0, 7, 2 \} & \{ 0, 1, 5 \} & \{ 0, 4, 2 \} & \{ 0, 1, 2 \}\\
\{ 0, 4, 5 \} & \{ 0, 4, 5 \} & \{ 0, 4, 5 \} & \{ 0, 4, 2 \} & \{ 0, 1, 5 \} & \{ 0, 4, 2 \} & \{ 0, 1, 2 \}\\
\{ 0, 7, 2 \} & \{ 0, 7, 2 \} & \{ 0, 4, 2 \} & \{ 0, 4, 2 \} & \{ 0, 1, 2 \} & \{ 0, 4, 2 \} & \{ 0, 1, 2 \}\\
\{ 0, 1, 5 \} & \{ 0, 1, 5 \} & \{ 0, 1, 5 \} & \{ 0, 1, 2 \} & \{ 0, 1, 2 \} & \{ 0, 1, 2 \} & \{ 0, 1, 2 \}\\
\{ 0, 4, 2 \} & \{ 0, 4, 2 \} & \{ 0, 4, 2 \} & \{ 0, 4, 2 \} & \{ 0, 1, 2 \} & \{ 0, 4, 2 \} & \{ 0, 1, 2 \}\\
\{ 0, 1, 2 \} & \{ 0, 1, 2 \} & \{ 0, 1, 2 \} & \{ 0, 1, 2 \} & \{ 0, 1, 2 \} & \{ 0, 1, 2 \} & \{ 0, 1, 2 \}
\end{array}
\]
\end{example}

\subsection{The canonical ideal}

Let $S$ be a numerical semigroup. The standard canonical ideal of $S$ is 
\[\operatorname{K}(S)=\{ x\in \mathbb{Z} : \operatorname{F}(S)-x\not\in S\}.\]
The following result recovers the description of the Apéry set of the canonical ideal of a numerical semigroup given in \cite[Proposition~1.3.9]{tesi-serban}.

\begin{proposition}\label{prop:canonical-apery}
    Let $S$ be a numerical semigroup with multiplicity $m$, and let $E$ be in $\mathcal{I}_0(S)$. Let $f=\operatorname{F}(S)\mod m$. Then $E=\operatorname{K}(S)$ if and only if $w_i(E)+w_j(S)=w_f(S)=w_f(E)$ for all $i,j\in \{0,\dots,m-1\}$ with $i+j\equiv f\pmod m$.
\end{proposition}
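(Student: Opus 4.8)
The plan is to reduce everything to a comparison of Apéry sets. Since every ideal $E\in\mathcal{I}_0(S)$ is recovered from its Apéry set via $E=\operatorname{Ap}(E,m)+S$, to prove the equivalence it suffices to compute $\operatorname{Ap}(\operatorname{K}(S),m)$ explicitly and then check, on the one hand, that it satisfies the stated equalities (necessity), and on the other hand that those equalities force $\operatorname{Ap}(E,m)=\operatorname{Ap}(\operatorname{K}(S),m)$ (sufficiency). As a preliminary step I would verify that $\operatorname{K}(S)$ really is an element of $\mathcal{I}_0(S)$: it is an ideal by the usual argument (if $\operatorname{F}(S)-(x+s)\in S$ with $s\in S$, then $\operatorname{F}(S)-x\in S$), it contains $0$ because $\operatorname{F}(S)\notin S$, and it contains no negative integer because $x<0$ gives $\operatorname{F}(S)-x>\operatorname{F}(S)$, hence $\operatorname{F}(S)-x\in S$; thus $\min(\operatorname{K}(S))=0$.

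The core computation is a closed formula for $w_i(\operatorname{K}(S))$. First I would record two facts about $S$ itself. Since $\operatorname{F}(S)+m>\operatorname{F}(S)$ lies in $S$, while no smaller element congruent to $\operatorname{F}(S)$ modulo $m$ can (otherwise adding a suitable multiple of $m\in S$ would put $\operatorname{F}(S)$ into $S$), we get $w_f(S)=\operatorname{F}(S)+m=\max\operatorname{Ap}(S,m)$. Second, for each residue $j$ the largest integer congruent to $j$ modulo $m$ that is \emph{not} in $S$ is exactly $w_j(S)-m$, since every integer $\equiv j$ and $\ge w_j(S)$ lies in $S$, while $w_j(S)-m\notin S$ by minimality of $w_j(S)$. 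Now $w_i(\operatorname{K}(S))$ is the least nonnegative $x\equiv i\pmod m$ with $\operatorname{F}(S)-x\notin S$; writing $y=\operatorname{F}(S)-x$, minimizing $x$ amounts to maximizing $y$ among integers $\equiv f-i\pmod m$ with $y\notin S$, so $y=w_{f-i}(S)-m$, whence
\[
w_i(\operatorname{K}(S))=\operatorname{F}(S)-\bigl(w_{f-i}(S)-m\bigr)=\operatorname{F}(S)+m-w_{f-i}(S)=w_f(S)-w_{f-i}(S).
\]

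With this formula in hand both implications are immediate. For necessity, if $i+j\equiv f\pmod m$ then $w_{f-i}(S)=w_j(S)$ (using the convention $w_k=w_{k\,\mathrm{mod}\,m}$), so $w_i(\operatorname{K}(S))+w_j(S)=w_f(S)$; taking $i=f$, $j=0$ gives $w_f(\operatorname{K}(S))=w_f(S)$, which is exactly the asserted system of equalities. For sufficiency, suppose $E\in\mathcal{I}_0(S)$ satisfies them; for each $i$ pick the unique $j\in\{0,\dots,m-1\}$ with $i+j\equiv f\pmod m$, so that $w_i(E)=w_f(E)-w_j(S)=w_f(S)-w_{f-i}(S)=w_i(\operatorname{K}(S))$ by the formula above. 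Hence $\operatorname{Ap}(E,m)=\operatorname{Ap}(\operatorname{K}(S),m)$ and therefore $E=\operatorname{K}(S)$. I expect the only delicate point to be establishing the identity $w_i(\operatorname{K}(S))=w_f(S)-w_{f-i}(S)$: one must argue carefully that minimizing $x$ corresponds to maximizing $y=\operatorname{F}(S)-x$ and correctly identify the largest non-member $w_{f-i}(S)-m$ of the relevant residue class. Once this is in place, the remainder is routine bookkeeping with the Apéry data.
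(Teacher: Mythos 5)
Your proof is correct, and it takes a genuinely different route from the paper's. The paper proves necessity by a two-sided bound: from $w_i(K)-m\notin K$ and the definition of $\operatorname{K}(S)$ it gets $w_f(S)-w_i(K)\in S$, hence $w_i(K)+w_j(S)\le w_f(S)$, and the reverse inequality comes from Lemma~\ref{lem:ap} applied to $K$ as an ideal (after noting $\operatorname{F}(K)=\operatorname{F}(S)$, so $w_f(K)=w_f(S)$); its sufficiency is then a double inclusion $\operatorname{K}(S)\subseteq E$ and $E\subseteq\operatorname{K}(S)$ carried out pointwise with inequality manipulations. You instead derive the closed formula $w_i(\operatorname{K}(S))=w_f(S)-w_{f-i}(S)$ straight from the definition of $\operatorname{K}(S)$, via the order-reversing substitution $y=\operatorname{F}(S)-x$ and the observation that the largest non-member of $S$ in the residue class of $j$ is $w_j(S)-m$; this uses only properties of $S$ and never invokes Lemma~\ref{lem:ap} or the ideal structure of $\operatorname{K}(S)$ (beyond checking $\operatorname{K}(S)\in\mathcal{I}_0(S)$, which the paper takes for granted). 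Once the formula is in place, necessity is a one-line computation and sufficiency collapses to the fact that $E\mapsto\operatorname{Ap}(E,m)$ is injective on $\mathcal{I}_0(S)$, since $E=\operatorname{Ap}(E,m)+S$. What your route buys: both implications become routine bookkeeping, and the identity $w_i(K)=w_f(S)-w_{f-i}(S)$ is exactly what the paper later extracts from this proposition to prove Corollary~\ref{known}, so your argument delivers that ingredient directly. What the paper's route buys: it reuses the Apéry machinery of Lemma~\ref{lem:ap} already developed for ideals, and avoids having to identify the maximal gap in each residue class.
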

\begin{proof}
Set $K=\operatorname{K}(S)$. Notice that $\operatorname{F}(S)=\operatorname{F}(K)$ and thus $w_f(S)=w_f(K)$, and $\operatorname{F}(S)=w_f(S)-m$ (see for instance \cite[Proposition~2.12]{ns}). 

As $w_i(K)-m\not\in K$, we have that $\operatorname{F}(S)-(w_i(K)-m)=w_f(S)-w_i(K)\in S$, which means that $w_f(S)-w_i(K)\ge w_j(S)$, $i+j\equiv f\mod m$, or equivalently, $w_i(K)+w_j(S)\le w_f(S)$, and by Lemma~\ref{lem:ap}, we know that $w_i(K)+w_j(S)\ge w_f(S)$. Hence $w_i(K)+w_j(S)= w_f(S)$.

Now suppose that $E$ is an ideal in $\mathcal{I}_0(S)$ with $w_f(E)=w_f(S)$ and $w_i(E)+w_j(S)=w_f(E)$ for all $i,j\in \{0,\dots,m-1\}$ with $i+j\equiv f\pmod m$. Let $x\in K$. Let $i= x \mod m$, and $j=(f-i) \mod m$. Then $\operatorname{F}(S)-x\not\in S$, which means that $w_f(S)-m-x<w_j(S)$, and so $w_j(S)+x+m>w_f(S)=w_i(E)+w_j(S)$. It follows that $x+m>w_i(E)$, and so $x\ge w_i(E)$, yielding $x\in E$. For the other inclusion, let us prove that $w_i(E)$ is in $K$ for all $i$. Let $j$ be as above. We have to show that $\operatorname{F}(S)-w_i(E)\not\in S$, or equivalently, $w_f(S)-m-w_i(E)<w_j(S)$, which translates to $w_i(E)+w_j(S)+m>w_f(E)$. But this trivially holds, since by hypothesis $w_i(E)+w_j(S)=w_f(E)$.
\end{proof}

With this, we retrieve the following result, that is most probably known, but for which we do not find an appropriate reference in the literature.

\begin{corollary} \label{known}
    Let $S$ be a numerical semigroup. Then the canonical ideal of $S$ is generated by $\{\operatorname{F}(S)-g : g \in \operatorname{PF}(S)\}$.
\end{corollary}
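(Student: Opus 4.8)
The plan is to derive this corollary directly from Proposition~\ref{prop:canonical-apery}, which already tells us that $K=\operatorname{K}(S)$ is the ideal in $\mathcal{I}_0(S)$ whose Apéry set satisfies $w_i(K)+w_j(S)=w_f(S)$ whenever $i+j\equiv f\pmod m$, where $f=\operatorname{F}(S)\bmod m$. Since every ideal $E\in\mathcal{I}_0(S)$ satisfies $E=\operatorname{Ap}(E,m)+S$, the minimal generating set of $K$ (as an ideal) is contained in $\operatorname{Ap}(K,m)$, so the first step is to identify which Apéry elements $w_i(K)$ are genuine minimal generators and to match them against the proposed generating set $\{\operatorname{F}(S)-g:g\in\operatorname{PF}(S)\}$.

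First I would use the defining description $K=\{x\in\mathbb{Z}:\operatorname{F}(S)-x\not\in S\}$ together with the fact $\operatorname{PF}(S)=\operatorname{Maximals}_{\le_S}(\mathbb{Z}\setminus S)$. For each $g\in\operatorname{PF}(S)$, since $g\not\in S$ we have $\operatorname{F}(S)-(\operatorname{F}(S)-g)=g\not\in S$, so $\operatorname{F}(S)-g\in K$; thus $\{\operatorname{F}(S)-g:g\in\operatorname{PF}(S)\}\subseteq K$ and the ideal $\{\operatorname{F}(S)-g:g\in\operatorname{PF}(S)\}+S$ is contained in $K$. For the reverse inclusion, I would take an arbitrary $x\in K$, so $\operatorname{F}(S)-x\not\in S$; because pseudo-Frobenius numbers are the $\le_S$-maximal non-elements, there is some $g\in\operatorname{PF}(S)$ with $\operatorname{F}(S)-x\le_S g$, i.e. $g-(\operatorname{F}(S)-x)=s\in S$. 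Rearranging gives $x=(\operatorname{F}(S)-g)+s$, which shows $x\in\{\operatorname{F}(S)-g:g\in\operatorname{PF}(S)\}+S$. This establishes $K=\{\operatorname{F}(S)-g:g\in\operatorname{PF}(S)\}+S$, proving the claim.

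The main subtlety, and the step I expect to require the most care, is the passage from ``$\operatorname{F}(S)-x\not\in S$'' to the existence of a dominating pseudo-Frobenius number: I must ensure I apply the maximality characterisation of $\operatorname{PF}(S)$ to the element $\operatorname{F}(S)-x$ correctly, noting that $\operatorname{F}(S)-x$ is indeed a gap (element of $\mathbb{Z}\setminus S$) and that the $\le_S$-ordering produces an element $s\in S$ with $s=g-(\operatorname{F}(S)-x)$. One should also confirm that the set $\{\operatorname{F}(S)-g:g\in\operatorname{PF}(S)\}$ is the claimed generating set and not merely a generating set up to adding elements of $S$; but since the corollary only asserts that $K$ is \emph{generated by} this set, the two inclusions above suffice and there is no need to verify minimality. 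If desired, minimality can be read off from Proposition~\ref{prop:canonical-apery}, since distinct pseudo-Frobenius numbers lie in distinct congruence classes modulo $m$ (being $\le_S$-incomparable) and hence give distinct Apéry elements $w_i(K)=\operatorname{F}(S)-g+m$.
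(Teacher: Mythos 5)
Your proof is correct, but it takes a genuinely different route from the paper's. Despite your stated plan, your core argument never actually uses Proposition~\ref{prop:canonical-apery}: both inclusions follow directly from the definition $\operatorname{K}(S)=\{x\in\mathbb{Z}:\operatorname{F}(S)-x\notin S\}$ together with $\operatorname{PF}(S)=\operatorname{Maximals}_{\le_S}(\mathbb{Z}\setminus S)$, a fact the paper itself invokes in the proof of Proposition~\ref{prop:lo-up-bound-clS-genus-type}, so your appeal to it is legitimate (and you correctly flag the one point needing care, namely that $\operatorname{F}(S)-x$ may be any element of $\mathbb{Z}\setminus S$, not necessarily a non-negative gap). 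The paper instead runs the argument through the Apéry machinery: it uses Proposition~\ref{prop:canonical-apery} to get $w_i(K)=w_f(S)-w_{f-i}(S)$, writes $w_{f-i}(S)=m_i-s_i$ with $m_i\in\operatorname{Maximals}_{\le_S}(\operatorname{Ap}(S,m))$, and then applies \eqref{eq:pF-max} to recognize $m_i-m$ as a pseudo-Frobenius number; only the easy inclusion $\{\operatorname{F}(S)-g:g\in\operatorname{PF}(S)\}+S\subseteq \operatorname{K}(S)$ is handled exactly as you do. Your version is shorter and more elementary, and does not depend on the canonical-ideal Apéry description at all; the paper's version, in exchange, fits the section's theme of reading everything off Apéry sets and explicitly identifies each Apéry element of $\operatorname{K}(S)$. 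Two small slips in your optional closing aside: the Apéry element associated to $g\in\operatorname{PF}(S)$ is $w_i(K)=\operatorname{F}(S)-g$ with $i=(\operatorname{F}(S)-g)\bmod m$, not $\operatorname{F}(S)-g+m$; and minimality of the generating set does not follow from the generators lying in distinct residue classes alone --- rather, an equality $\operatorname{F}(S)-g_0=(\operatorname{F}(S)-g)+s$ with $s\in S\setminus\{0\}$ would give $g=g_0+s$, contradicting the $\le_S$-incomparability of pseudo-Frobenius numbers. Since the corollary only asserts that the set generates $\operatorname{K}(S)$, neither point affects the validity of your proof.
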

\begin{proof}
Let $K$ be the canonical ideal of $S$. Let $m$ be the multiplicity of $S$ and $f$ the Frobenius number of $S$. Let us prove that 
$K=\{f-g : g \in \operatorname{PF}(S)\}+S$.

By Proposition~\ref{prop:canonical-apery}, we know that $w_i(K)=w_f(S)-w_{f-i}(S)$ for all $i$; whence $w_i(K)=(f+m)-w_{f-i}(S)$. We also know that $K=\operatorname{Ap}(K,m)+S$. For every $i$, there exists $m_i\in \operatorname{Maximals}_{\le_S}(\operatorname{Ap}(S,m))$ and $s_i\in S$ such that $w_{f-i}(S)+s_i=m_i$. Hence $w_i(K)=(f+m)-(-s_i+m_i)=f+m+s_i-m_i=(f-(m_i-m))+s_i$. By \eqref{eq:pF-max}, $m_i-m\in \operatorname{PF}(S)$, and so $w_i(K)\in \{f-g : g \in \operatorname{PF}(S)\}+S$. This proves that $K\subseteq \{f-g : g \in \operatorname{PF}(S)\}+S$. Now let $g\in \operatorname{PF}(S)$. Then $f-(f-g)=g\not\in S$, and thus $f-g\in K$, and consequentely $\{f-g : g \in \operatorname{PF}(S)\}+S\subseteq K$. 
\end{proof}

\subsection{Reduction number}

If in Proposition~\ref{prop:ap-sum}, we take $J=I$, we obtain 
\[
w_i(I+I)=\min\{ w_{i_i}(I)+w_{i_2}(I) : i_1,i_2\in \{0,\dots,m-1\}, i_i+i_2\equiv i \pmod m\}.
\]
Since $I\subseteq  I+I = 2I$ and $\mathcal{I}_0(S)$ has finitely many elements, we deduce that there is some $r$ such that $(r+1)I=r I$. This $r$ is known as the reduction number of $I$, denoted by $\operatorname{r}(I)$. 

Notice that for all $i$, $w_i((r+1)I)\le w_i(r I)$, and $r$ is the reduction number of $I$ precisely when all these inequalities become equalities. 

Some basic properties of the reduction number of an ideal can be found in \cite{b-k}, and we summarize them in the next result.

\begin{proposition}\label{prop:basic-reduction}
    Let $S$ be a numerical semigroup with multiplicity $m$.
    \begin{enumerate}
        \item For every ideal $E$ of $S$ and every positive integer $j\le \operatorname{r}(E)$, we have that $\nu(jE)>j$.
        \item For all $E\in \mathcal{I}_0(S)$, $\operatorname{r}(E)\le m-1$.
        \item For every $r\in \{1,\dots,m-1\}$, there exists $E\in \mathcal{I}_0(S)$ with $\operatorname{r}(E)=r$.
    \end{enumerate}        
\end{proposition}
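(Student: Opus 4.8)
The plan is to work throughout in $\mathcal{I}_0(S)$: since $\nu$ and $\operatorname{r}$ are invariant under translation, one may replace $E$ by $-\min(E)+E$ and assume $\min(E)=0$, so that with the convention $0\cdot E:=S$ we have $S\subseteq E\subseteq 2E\subseteq\cdots$ and $\operatorname{r}(E)$ is the smallest $r$ with $rE=(r+1)E$. Iterating Proposition~\ref{prop:ap-sum} provides the engine for all three parts: for every residue $i$,
\[
w_i(jE)=\min\big\{\,w_{i_1}(E)+\cdots+w_{i_j}(E)\;:\;i_1+\cdots+i_j\equiv i \pmod m\,\big\},
\]
and, since $w_0(E)=0$, each $w_i(jE)$ is non-increasing in $j$, the chain stabilising exactly when all these values become constant. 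I would record the elementary fact that if a minimal representation of $w_i(jE)$ uses only $\ell$ nonzero summands, then $w_i(jE)=w_i(\ell E)$.

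For (2) I would bound the number of nonzero summands needed. Suppose some $w_i(jE)$ has a minimal representation with nonzero summands $w_{b_1}(E),\dots,w_{b_t}(E)$ and $t\ge m$. Among the $t+1\ge m+1$ partial sums $b_1+\cdots+b_k\pmod m$ ($0\le k\le t$) two coincide by pigeonhole, so a nonempty block of summands has indices summing to $0\bmod m$; the corresponding sum of Apéry elements is positive and divisible by $m$, hence at least $m$. Deleting this block yields a shorter representation of the same residue with strictly smaller value, contradicting $w_i(j'E)\ge w_i(jE)$ for $j'<j$. Thus every minimal representation uses at most $m-1$ nonzero summands, whence $w_i(jE)=w_i((m-1)E)$ for all $i$ and all $j\ge m-1$; that is $(m-1)E=mE$ and $\operatorname{r}(E)\le m-1$.

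For (1), set $r=\operatorname{r}(E)$ and fix a residue $i^{*}$ whose value drops only at the last step, so that the minimal representation $w_{i^{*}}(rE)=w_{b_1}(E)+\cdots+w_{b_r}(E)$ has length exactly $r$. The plan is to show that the $j+1$ partial sums $p_k=w_{b_1}(E)+\cdots+w_{b_k}(E)$ ($0\le k\le j$) are distinct minimal generators of $jE$. A replacement argument (if a prefix could be lowered, or a proper sub-block removed, then $w_{i^{*}}(rE)$ would admit a representation of length $<r$) shows $p_k=w_{c_k}(kE)=w_{c_k}(rE)=w_{c_k}(jE)$ for $k\le j$, and that the residues $c_0=0,c_1,\dots,c_j$ are pairwise distinct; this already yields $j+1$ distinct elements of $\Ap(jE,m)$. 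The hard part, and the step I expect to be the main obstacle, is to guarantee that each $p_k$ is \emph{genuinely} a minimal generator, i.e. is $\le_S$-minimal in $\Ap(jE,m)$ rather than merely a partial sum: this forces one to choose the bottleneck representation with all summands being gaps (minimal generators of $E$) and with no extraneous element of $S$, so that consecutive partial sums become $\le_S$-incomparable. Establishing that such an all-gaps minimal representation exists, and that no other Apéry element of $jE$ lies $\le_S$-below some $p_k$, is the delicate point; once it is in place one obtains $\nu(jE)\ge j+1>j$, and in particular $\nu(rE)\ge r+1$, which together with $\nu\le m$ re-proves (2).

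Finally, for (3) I would exhibit an explicit family. Since $1,\dots,m-1$ are gaps, for $r\in\{1,\dots,m-1\}$ put
\[
E_r=\big(\{0,1\}\cup\{r+1,r+2,\dots,m-1\}\big)+S
\]
(so $E_{m-1}=\{0,1\}+S$ and $E_1=\mathbb{N}$). Using the formula above, every residue in $\{r+1,\dots,m-1\}$ is already a generator and is stable from the first step; every residue $\ell\le r$ is reached at its minimal value $\ell$ by summing $\ell$ copies of the generator $1$; and residue $r$ is the bottleneck, since its minimal value $r$ can be written only with exactly $r$ copies of $1$ (all other nonzero generators exceed $r$), giving $w_r(jE_r)>r$ for $j<r$ and $w_r(rE_r)=r$. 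Hence every residue stabilises by step $r$ while residue $r$ changes precisely at step $r$, so $\operatorname{r}(E_r)=r$. The only computation required is the elementary check that completing a short generator sum by a nonzero element of $S$ cannot produce value $r$ in fewer than $r$ steps, which holds because any nonzero element of $S$ is at least $m>r$.
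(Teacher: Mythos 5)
The paper itself gives no proof of this proposition; it is stated as a summary of results from \cite{b-k}. So your proposal has to be judged on its own correctness rather than against an argument in the text.

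Parts (2) and (3) of your proposal are correct and self-contained. For (2), the block-deletion argument works: any representation attaining $w_i(jE)$ with at least $m$ nonzero summands has two partial index-sums congruent modulo $m$, hence contains a consecutive block of summands whose sum is a positive multiple of $m$; deleting the block produces an element of $j'E$, $j'<j$, in the class of $i$ and strictly smaller than $w_i(jE)$, contradicting monotonicity. For (3), your family $E_r=(\{0,1\}\cup\{r+1,\dots,m-1\})+S$ does satisfy $\operatorname{r}(E_r)=r$: for $j\le r$, the only way to write $j$ as a sum of generators of $E_r$ plus an element of $S$ is as $j$ copies of $1$ (every other nonzero generator exceeds $r$, and every nonzero element of $S$ is at least $m>r$), so $j\in jE_r\setminus (j-1)E_r$, while $rE_r=\mathbb{N}$ is idempotent. (For $r=m-1$ this also follows from Proposition~\ref{prop:red-two-gens} applied to $\{0,1\}+S$.)

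Part (1) is where the genuine gap lies, and you flag it yourself. What your argument actually delivers is $j+1$ distinct elements $p_0,\dots,p_j$ of $\Ap(jE,m)$; this much is correct, since the substitution argument together with the stabilization $nE=rE$ for all $n\ge r$ gives $p_k=w_{c_k}(kE)=w_{c_k}(rE)=w_{c_k}(jE)$ with pairwise distinct residues $c_k$. But $|\Ap(jE,m)|=m$ for every $j$, so distinct Apéry elements by themselves say nothing about $\nu(jE)$: the minimal generators of $jE$ are exactly the $\le_S$-minimal elements of $\Ap(jE,m)$, and the assertion that each $p_k$ is one of them is the entire content of (1) --- it is announced, not proved. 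Some of your hedges can in fact be carried out: every summand of the bottleneck representation may be assumed to be a gap (a summand lying in $S^*$ could be deleted, placing $w_{i^*}(rE)$ in $(r-1)E$, against the choice of $i^*$), and the same substitution excludes $p_k=w_d(jE)+s$ with $s\in S^*$ whenever $w_d(jE)\in (k-1)E$. The case that nothing excludes is $p_k=w_d(jE)+s$ where $w_d(jE)$ itself requires $k$ or more summands from $E$: substituting a representation of $w_d(jE)$ into the bottleneck expresses $w_{i^*}(rE)$ as a sum of at least $r$ elements of $E$ plus an element of $S$, which contradicts nothing, because every such sum lies in $rE$. So the argument stalls precisely at the step you call delicate; since for $j=1$ the claim is trivial ($E\ne S$ forces $\nu(E)\ge 2$), all cases $2\le j\le \operatorname{r}(E)$ remain unproved. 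Be aware that the underlying combinatorics is genuinely treacherous here: minimal generators need not persist in powers. For instance, with $S=\langle 5,8,9,11,12\rangle$ and $E=\{0,1,7\}+S$, the element $7$ is a minimal generator of $E$, yet $7=(1+1)+5\in 2E+S^*$ is not a minimal generator of $2E$. A complete proof of (1) must get around phenomena of this kind, or simply follow \cite{b-k}.
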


\begin{example}
Let $S$ be a numerical semigroup with multiplicity $m$, and let $g\in \mathbb{N}\setminus S$. Consider the ideal $E=\{0,g\}+S$. Observe that $w_i(E)=w_i(S)$ for all $i\not\equiv g \pmod m$, and that $w_{g}(E)=g<w_{g}(S)$. Clearly, $E+E=\{0,g,2g\}+S$, and so $E+E=E$ if and only if $2g\in S$. Thus $\operatorname{r}(\{0,g\}+S)=1$ if and only if $2g\in S$.
\end{example}

In general, we obtain the following (compare with \cite[Proposition~2.3.9]{b-k}).
\begin{proposition}\label{prop:red-two-gens}
Let $S$ be a numerical semigroup, and let $g$ be a gap of $S$. Then
\[\operatorname{r}(\{0,g\}+S)=\min\{ k\in \mathbb{N} : (k+1)g\in S\}.\] 
\end{proposition}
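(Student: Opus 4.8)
The plan is to compute the powers $nE$ explicitly and then reduce the defining equality $(\operatorname{r}(E)+1)E=\operatorname{r}(E)E$ to a single membership condition among the multiples of $g$.

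First I would establish that $nE=\{0,g,2g,\dots,ng\}+S$ for every $n\ge 1$. This follows by an easy induction from $E=\{0,g\}+S$ together with the identity $S+S=S$: since $\{0,g\}+\{0,g\}=\{0,g,2g\}$, one gets $2E=\{0,g,2g\}+S$, and in general the minimal generators of $nE$ are exactly the multiples $0,g,\dots,ng$. In particular $rE=\{0,g,\dots,rg\}+S\subseteq \{0,g,\dots,(r+1)g\}+S=(r+1)E$ always holds, so the equality $(r+1)E=rE$ is equivalent to the reverse inclusion $(r+1)E\subseteq rE$, which, all of whose other generators already lie in $rE$, is equivalent to the single membership $(r+1)g\in rE$.

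Next I would unfold this membership. The relation $(r+1)g\in\{0,g,\dots,rg\}+S$ means $(r+1)g=jg+s$ for some $j\in\{0,\dots,r\}$ and $s\in S$; writing $k=r+1-j$, this says precisely that $kg\in S$ for some $k\in\{1,\dots,r+1\}$. Conversely, if $kg\in S$ with $1\le k\le r+1$, then $(r+1)g=(r+1-k)g+kg\in rE$. Hence $(r+1)E=rE$ if and only if $kg\in S$ for some $k\in\{1,\dots,r+1\}$.

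Finally I would read off the reduction number. Set $k_0=\min\{k\ge 1 : kg\in S\}$; this is well defined and satisfies $k_0\ge 2$ because $g=1\cdot g\notin S$, as $g$ is a gap. The condition above holds exactly when $k_0\le r+1$, that is $r\ge k_0-1$, so the least such $r$ is $\operatorname{r}(E)=k_0-1=\min\{k\in\mathbb{N}:(k+1)g\in S\}$, which is the claimed formula. The argument is essentially clean bookkeeping of the multiples of $g$; the only point that needs care is the off-by-one between $k_0$ and the index appearing in the statement, together with the observation that $g\notin S$ forces $\operatorname{r}(E)\ge 1$, so no degenerate case with $r=0$ can occur.
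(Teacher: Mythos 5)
Your proof is correct and takes essentially the same route as the paper's: both establish $nE=\{0,g,\dots,ng\}+S$ and then unfold the membership $(r+1)g\in rE$ as $(r+1)g=jg+s$ with $s\in S$, the only difference being organizational, since the paper argues via the two inequalities $\operatorname{r}(E)\le t$ and $t\le \operatorname{r}(E)$ (ruling out $j>0$ by contradiction with the minimality of $\operatorname{r}(E)$), whereas you characterize exactly which $r$ satisfy $(r+1)E=rE$ and read off the minimum. One cosmetic slip: $0,g,\dots,ng$ need not be the \emph{minimal} generators of $nE$ (for instance $3g$ is redundant in $3E$ once $2g\in S$), but your argument only uses that these elements generate $nE$, so nothing is affected.
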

\begin{proof}
Let $E=\{0,g\}+S$. It is easy to see that $k E=\{0,g,\dots,kg\}+S$ for $k$ a positive integer. 

Let $r=\operatorname{r}(E)$ and let $t=\min\{k\in \mathbb{N} : (k+1)g\in S\}$. As $(t+1)g\in S$, we have that $(t+1)E=\{0,\dots,tg\}+S=t E$, and so $r\le t$. We also know that $(r+1)E=r E$ and that $(k+1)E\neq k E$ for all $k<r$. The equality $(r+1)E=r E$ forces $(r+1)g$ to be in $r E$, and so there exists $j\in\{0,\dots,r\}$ such that $(r+1)g\in j g+S$. If $j=0$, then $(r+1)g\in S$ and the minimality of $t$ yields $t\le r$. If $j>0$, we get $(r+1-j)g\in S$, and this forces $(r+1-j) E=(r-j) E$, which is impossible. 
\end{proof}

\begin{example}
    Let $n$ be an odd integer greater than one. Let $S=\langle 2,n\rangle$. Every ideal $E$ in $\mathcal{I}_0(S)\setminus\{S\}$ is of the form $\{0,k\}+S$, with $k$ an odd integer smaller than $n$ (a gap of $S$). Notice that $2k\in S$, and so according to Proposition~\ref{prop:red-two-gens}, $\operatorname{r}(E)=1$. Note that this also follows from Proposition~\ref{prop:basic-reduction}~(2), since in this setting $m=2$. 
    
    Notice that, by Proposition~\ref{prop:basic-reduction}~(3), if $m>2$, then there is an ideal $E$ such that $\operatorname{r}(E)>1$. Thus, if every nontrivial ideal has reduction number one, the multiplicity must be at most two.

    Observe also that $2E=E$ for all $E\in \mathcal{I}_0(S)\setminus\{S\}$ implies that $E=2E+S$, and so $E$ is in $2E+\mathcal{I}_0(S)$ (that is, $\mathcal{I}_0(S)$ is a Clifford semigroup). The converse is also true. If $\mathcal{I}_0(S)$ is a Clifford semigroup, that is, for all $E\in \mathcal{I}_0(S)$, $E\in 2E+\mathcal{I}_0(S)$, then there exists $E'\in \mathcal{I}_0(S)$ such that $E=2E+E'$, but then $E\subseteq 2E\subseteq 2E+E'=E$, which means that $E=2E$.
\end{example}

Next we give an upper bound for the reduction number of ideals generated by sets of the form $\{0\}\cup X$ with $X$ a set of gaps smaller than the multiplicity of the semigroup.

\begin{proposition}\label{pr:red-gaps-smaller-mult}
Let $S$ be a numerical semigroup with multiplicity $m$. If $E= \{ 0, a_1, \dots, a_{h} \}+S\in \mathcal{I}_0(S)$, with $a_i \leq m$ for every $i$, then $\operatorname{r}(E) \leq m-h$. 
\end{proposition}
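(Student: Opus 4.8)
The plan is to reinterpret the reduction number through the Apéry values of the powers $kE$ and to run a counting argument in $\mathbb{Z}/m\mathbb{Z}$. First I would reduce to the essential case: since each $a_i\le m$ is a minimal generator of $E$ and $m\in S$, every $a_i$ is a gap with $a_i\in\{1,\dots,m-1\}$, and these are pairwise distinct (if $\{0,a_1,\dots,a_h\}$ were not minimal one would first discard the redundant generators). Let $T=\bigcup_{k\ge 1}kE$ be the idempotent limit of the chain $E\subseteq 2E\subseteq\cdots$ (the over-semigroup $\langle m,a_1,\dots,a_h\rangle$). Since $\mathcal{I}_0(S)$ is finite, $\operatorname{r}(E)=\min\{k:kE=T\}$. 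For each residue $i$ set $q_i=\min\{k:w_i(kE)=w_i(T)\}$, the step at which class $i$ reaches its final Apéry value; because the tuples $\operatorname{Ap}(kE,m)$ decrease componentwise to $\operatorname{Ap}(T,m)$, one has $\operatorname{r}(E)=\max_i q_i=:r$. Throughout I would use the recursion $w_i(kE)=\min_{0\le l\le h}\bigl(a_l+w_{i-a_l}((k-1)E)\bigr)$ (with $a_0=0$), which follows from $kE=\bigcup_{l}(a_l+(k-1)E)$, or directly from Proposition~\ref{prop:ap-sum}.

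The argument then rests on two observations. The first is that every generator class attains its final value immediately: as $a_l<m$ we have $w_{a_l}(T)=a_l$, while $a_l\in E\setminus S$, so $q_{a_l}=1$; this produces $h$ distinct residues with $q$-value exactly $1$. The second, and the crux, is an ``intermediate value'' statement for $q$ along a greedy chain. Assuming $r\ge 1$, I would pick $i_0$ with $q_{i_0}=r$; the recursion, together with $w_{i_0}((r-1)E)>w_{i_0}(T)$, forces $w_{i_0}(rE)=a_{l_1}+w_{i_1}((r-1)E)$ for some generator $a_{l_1}$, where $i_1=i_0-a_{l_1}\bmod m$. Using the minimality of $w_{i_1}(T)$ in its class one checks that $w_{i_1}((r-1)E)=w_{i_1}(T)$, and using that lowering the count would lower $w_{i_0}((r-1)E)$ to $w_{i_0}(T)$ (contradicting $q_{i_0}=r$) one gets $q_{i_1}=r-1$. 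Iterating yields residues $i_0,i_1,\dots,i_r$ with $q_{i_t}=r-t$, hence $r+1$ \emph{distinct} residues realizing each of the $q$-values $0,1,\dots,r$.

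To conclude I would combine the two observations by a disjointness count modulo $m$. Among the chain residues exactly one, namely $i_{r-1}$, has $q$-value $1$; since each generator class has $q$-value $1$ and the generators are distinct residues, a generator can coincide with a chain residue only when their $q$-values agree, i.e. only with $i_{r-1}$. Thus the $h$ generator classes meet the $r+1$ chain classes in at most one residue, so together they occupy at least $(r+1)+h-1=r+h$ distinct classes of $\mathbb{Z}/m\mathbb{Z}$. Therefore $r+h\le m$, that is $\operatorname{r}(E)\le m-h$, with the cases $h=0$ or $r=0$ being immediate. I expect the main obstacle to be the intermediate-value step $q_{i_t}=r-t$: it is exactly the point where one must argue that the reduction cannot ``jump'' more than one level of $q$ per multiplication by $E$, and it is the place where the Apéry-minimality characterization of $w_i(T)$ (Lemma~\ref{lem:ap}) and the recursion have to be used in tandem; everything else is bookkeeping in $\mathbb{Z}/m\mathbb{Z}$.
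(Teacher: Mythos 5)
Your proof is correct, but it takes a genuinely different route from the paper's. The paper argues directly on elements: setting $x=m-h+1$, it writes any $t\in xE$ as $a_{i_1}+\dots+a_{i_x}+s$ with $s\in S$ and applies the pigeonhole principle to the partial sums $u_k=a_{i_1}+\dots+a_{i_k}$ in $\mathbb{Z}/m\mathbb{Z}$: either some $u_k$ is congruent to some $a_i$ (including $0$) modulo $m$, or two partial sums are congruent to each other, and in both cases a positive multiple of $m$ can be absorbed into $S$, showing $t\in (x-1)E$; hence $(m-h+1)E=(m-h)E$ outright. You instead work at the level of Ap\'ery coordinates: you introduce the per-residue stabilization times $q_i$ along the chain $E\subseteq 2E\subseteq\cdots$ with limit $T$, note $\operatorname{r}(E)=\max_i q_i$, show the $h$ generator classes have $q$-value exactly $1$, and—the crux, which your recursion $w_i(kE)=\min_l\bigl(a_l+w_{i-a_l}((k-1)E)\bigr)$ together with the additive closure of $T$ does justify—show that a class of $q$-value $r$ forces a chain of classes with $q$-values $r,r-1,\dots,0$; since these $r+1$ classes can meet the $h$ generator classes only in the single chain class of $q$-value $1$, you get $r+h\le m$. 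Both proofs are pigeonhole arguments modulo $m$, but yours is more structural: it localizes the reduction number class by class and yields the extra fact that every intermediate stabilization time $0,1,\dots,r$ is realized by some residue, whereas the paper's rewriting argument is shorter and more elementary. Two minor points, neither of which affects correctness: the limit $T$ equals $\langle a_1,\dots,a_h\rangle+S$, which may be strictly larger than $\langle m,a_1,\dots,a_h\rangle$ (your argument only uses that $T$ is an idempotent ideal containing the $a_l$, so nothing breaks); and in your preliminary reduction, ``discarding redundant generators'' would lower $h$ and thus weaken the desired bound—the cleaner observation is that distinct gaps in $\{1,\dots,m-1\}$ automatically form, together with $0$, a minimal generating system, which is the reading the paper's own proof also adopts when it replaces $a_i\le m$ by $a_i<m$.
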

\begin{proof}
We can assume $h<m-1$, otherwise $E=\mathbb N$ and there is nothing to prove. Moreover, for $h=1$ the thesis follows immediately by Proposition~\ref{prop:basic-reduction}, so we can assume $h>1$.

Set $a_0=0$ and $x=m-h+1$. We want to show that $x E=(x-1)E$. So take an element $t$ in $x E$. We have $t=a_{i_1}+\dots +a_{i_x}+s$, with $a_{i_j}\in \{a_1, \dots, a_{h}\}$ and $s \in S$. Set $u=a_{i_1}+\dots +a_{i_x}$. If $u \equiv a_i \pmod{m}$ for some $i\in \{0,\dots, h\}$, then, since $a_i < m$, we deduce $u = a_i + y m$, with $y\in\mathbb{N}$, which means means $t=u+s=a_i+y m+s \in E \subseteq (x-1)E$.

For every $k\in \{2,\dots,x\}$, set $u_k=a_{i_1}+\dots +a_{i_k}$. As above, if $u_k \equiv a_i \pmod{m}$ for some $i$, then $u_k=a_i+y m$, with $y\in \mathbb{N}$, since $a_i < m$ for every $i$. Thus, $t=u+s=a_i+y m+a_{i_{k+1}}+\dots +a_{i_x} +s \in (x-k+1) E$. As $2\leq x-k+1 \leq x-1$, we get $t\in (x-1)E$.

Hence we may suppose that $u$ and every $u_k$ are not congruent to any $a_i$ modulo $m$. By setting $u_x=u$, since $|\{u_2, \dots, u_{x-1},u_x\}|=x-1=m-h$ and $|\{a_0, \dots, a_h\}|=h+1$, there exist two partial sums $u_z, u_w$, with $2 \leq w < z \leq x$, such that $u_z \equiv u_w \pmod{m}$. It follows that $t=u+s=a_{i_1}+\dots +a_{i_w}+y m+a_{i_{z+1}}+\dots +a_{i_x}+s \in (w+x-z)E$ (with the obvious convention that if $z=x$, then $a_{i_{z+1}}+\dots +a_{i_x}=0$). As $2 \leq w+x-z \leq x-1$, we obtain once more that $t\in (x-1)E$ as desired.
\end{proof}

\begin{figure}
    \centering
    \includegraphics[scale=0.5]{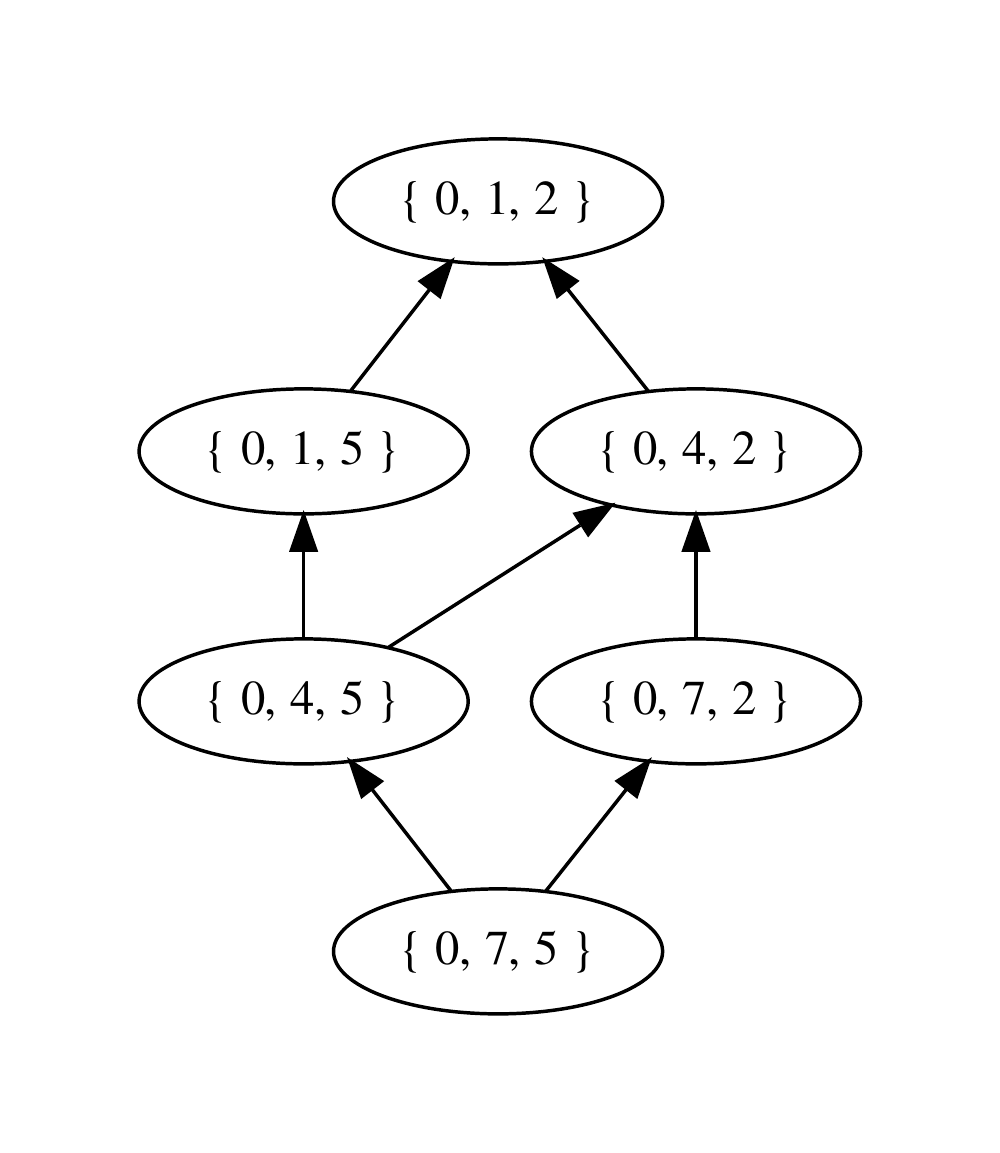}
\caption{Hasse diagram of $\mathcal{I}_0(\langle 3,5,7\rangle)$ arranged by inclusion}
    \label{fig:id-3-5-7-u}
\end{figure}

\subsection{Hasse diagram of the class monoid}
By using Apéry sets, we can obtain some information about the Hasse diagram (with respect to inclusion, see Figure~\ref{fig:id-3-5-7-u}) of the ideal class monoid of a numerical semigroup.
 
\begin{proposition}\label{prop:mins-maxs}
Let $S$ be a numerical semigroup with multiplicity $m$. Then
\begin{enumerate}
    \item $\max_{\subseteq}(\mathcal{I}_0(S))=\{0,\dots,m-1\}+S=\mathbb{N}$,
    \item $\min_{\subseteq}(\mathcal{I}_0(S))=S$,
    \item $\operatorname{Maximals}_{\subseteq}(\mathcal{I}_0(S)\setminus\{\mathbb{N}\})=\big\{ \{0,1,\dots,i-1,i+m,i+1,\dots,m-1\}+S : i\in \{1,\dots,m-1\}\big\}$,
    \item $\operatorname{Minimals}_{\subseteq}(\mathcal{I}_0(S)\setminus\{S\})=\left\{ \{0,f\}+S : f\in \operatorname{PF}(S)\right\}$,
    \item the largest positive integer $t$ for which there exists a strictly ascending chain $I_0\subsetneq I_1\subsetneq \dots \subsetneq I_t$ of ideals of $\mathcal{I}_0(S)$ is $\operatorname{g}(S)$.
\end{enumerate}
\end{proposition}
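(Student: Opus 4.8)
The plan is to reduce all five claims to elementary facts about the finite poset $(\operatorname{G}(S),\le_S)$. The key reformulation is that $E\mapsto E\setminus S$ is an isomorphism of posets from $(\mathcal{I}_0(S),\subseteq)$ onto the lattice of up-sets (filters) of $(\operatorname{G}(S),\le_S)$ ordered by inclusion. Indeed, any $E\in\mathcal{I}_0(S)$ satisfies $S=0+S\subseteq E$, so $E=S\sqcup(E\setminus S)$; and if $x\in E\setminus S$ and $x\le_S y$ for a gap $y$, then $y=x+(y-x)\in E+S\subseteq E$, whence $y\in E\setminus S$, so $E\setminus S$ is an up-set. Conversely, if $U$ is an up-set of gaps then $S\cup U\in\mathcal{I}_0(S)$: for $u\in U$ and $s\in S$, either $u+s\in S$ or $u+s$ is a gap with $u\le_S u+s$, so $u+s\in U$. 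Since every ideal in $\mathcal{I}_0(S)$ contains $S$, this correspondence is inclusion-preserving in both directions.

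Granting this, (1) and (2) are immediate: the full up-set $\operatorname{G}(S)$ is the top and corresponds to $\mathbb{N}$, while the empty up-set is the bottom and corresponds to $S$; note also $\{0,\dots,m-1\}+S=\mathbb{N}$ because $m\in S$. For (3) and (4), I would identify the maximal elements of $\mathcal{I}_0(S)\setminus\{\mathbb{N}\}$ with the co-atoms of the up-set lattice and the minimal elements of $\mathcal{I}_0(S)\setminus\{S\}$ with its atoms. A proper up-set is maximal exactly when its complement is a minimal nonempty down-set, i.e.\ a singleton $\{i\}$ with $i$ minimal in $(\operatorname{G}(S),\le_S)$; dually, a nonempty up-set is minimal exactly when it is a singleton $\{f\}$ with $f$ maximal. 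By the facts recalled in the excerpt, $\operatorname{Minimals}_{\le_S}(\operatorname{G}(S))=\{1,\dots,m-1\}$ and $\operatorname{Maximals}_{\le_S}(\operatorname{G}(S))=\operatorname{PF}(S)$, so the co-atoms are the $\operatorname{G}(S)\setminus\{i\}$ and the atoms are the $\{f\}$. Translating back, $S\cup(\operatorname{G}(S)\setminus\{i\})=\mathbb{N}\setminus\{i\}=\{0,\dots,i-1,i+m,i+1,\dots,m-1\}+S$ (only the Apéry coordinate in the class of $i$ changes, from $i$ to $i+m$), and $S\cup\{f\}=\{0,f\}+S$, where the last equality uses $f+(S\setminus\{0\})\subseteq S$ for $f\in\operatorname{PF}(S)$.

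For (5), the cardinality map $U\mapsto|U|$ is a strictly increasing rank function on up-sets with values in $\{0,\dots,\operatorname{g}(S)\}$; hence along any strict chain $I_0\subsetneq\cdots\subsetneq I_t$ the integers $|I_j\setminus S|$ strictly increase, forcing $t\le\operatorname{g}(S)$. For the reverse inequality, a chain of length exactly $\operatorname{g}(S)$ is produced by starting at $\emptyset$ and repeatedly adjoining a maximal element of the gaps not yet included: adjoining such an element to an up-set again yields an up-set, so we can grow one gap at a time all the way up to $\operatorname{G}(S)$.

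The genuinely non-formal steps, and where I would spend the care, are proving the up-set reformulation and the bookkeeping in (3): verifying that $\{0,\dots,i-1,i+m,i+1,\dots,m-1\}+S$ really equals $\mathbb{N}\setminus\{i\}$ and that this ideal is covered by $\mathbb{N}$. Once the reformulation is in place, the rest is standard finite-lattice combinatorics, so I do not expect a serious obstacle.
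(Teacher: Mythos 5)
Your proposal is correct, but it takes a genuinely different route from the paper. The paper proves (3) and (4) by working in Apéry coordinates: it observes that candidates for maximal (resp.\ minimal) elements are obtained by perturbing a single Apéry coordinate of $\mathbb{N}$ (resp.\ of $S$) by $+m$ (resp.\ $-m$), and then runs these candidates through the inequality criterion of Lemma~\ref{lem:ap} to decide which ones are Apéry sets of ideals; this is how the pseudo-Frobenius condition $w_i(S)-m\in\operatorname{PF}(S)$ emerges in (4). You instead establish once and for all that $E\mapsto E\setminus S$ is a poset isomorphism from $(\mathcal{I}_0(S),\subseteq)$ onto the lattice of up-sets of $(\operatorname{G}(S),\le_S)$, after which (1)--(4) reduce to identifying the top, bottom, co-atoms and atoms of a finite filter lattice (complements of minimal gaps, singletons of maximal gaps), combined with the facts $\operatorname{Minimals}_{\le_S}(\operatorname{G}(S))=\{1,\dots,m-1\}$ and $\operatorname{Maximals}_{\le_S}(\operatorname{G}(S))=\operatorname{PF}(S)$ recalled in the paper; your verifications of the translations $\mathbb{N}\setminus\{i\}=\{0,\dots,i-1,i+m,i+1,\dots,m-1\}+S$ and $S\cup\{f\}=\{0,f\}+S$ are the right points to be careful about, and they check out. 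For (5) the two arguments essentially coincide (gap-counting for the upper bound, adjoining gaps largest-first for the chain), though your rank-function phrasing is tidier than the paper's difference sets $G_i=I_{i+1}\setminus I_i$. What your approach buys is conceptual economy: the up-set isomorphism also subsumes Proposition~\ref{prop:card-clS-antichain} (antichains of gaps correspond to up-sets via minimal generators) and exhibits $\mathcal{I}_0(S)$ as a distributive lattice under $\cap$ and $\cup$. What the paper's approach buys is consistency with the coordinate machinery of its Section~4 (Lemma~\ref{lem:ap}, Theorem~\ref{kunz}): it produces the explicit Apéry sets of the maximal and minimal ideals, which are reused later, for instance in computing reduction numbers of the maximal elements of $\mathcal{I}_0(S)\setminus\{\mathbb{N}\}$.
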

\begin{proof}
Recall that for $I,J\in \mathcal{I}_0(S)$, $I\subseteq I$ if and only if $(0,w_1(J),\dots,w_{m-1}(J))\le (0,w_1(I),\dots,w_{m-1}(I))$. Notice that whenever $w_i(J)\le w_i(I)$, we also have that $w_i(I)-w_i(J)$ is a multiple of $m$. 

The first two assertions follow directly from the definitions. As for the third, notice that for $\mathbb{N}=\{0,1,\dots,m-1\}+S$, we have that $w_i(\mathbb{N})=i$ for all $i$. Thus the potential candidates to be maximal below $\mathbb{N}$ are precisely those whose Apéry sets with respect to $m$ differ in one element. Let us prove that for every $i$, the set $\{0,1,\dots,i-1,i+m,i+1,\dots,m-1\}$ is an Apéry list of an ideal in $\mathcal{I}_0(S)$. By using Lemma~\ref{lem:ap}, this reduces to showing that for every $k\in \{0,\ldots,m-1\}$, $j+w_k(S)\ge (j+k)\mod m$ for $j\neq i$ and that $i+m+w_k(S)\ge (i+k)\mod m$. The first inequality holds because $j+w_k(S)\ge j+k$ and $(j+k)\mod m$ is either $j+k$ or $j+k-m$. The second inequality also holds, because $i+m+w_k(S)>i+k$. 

Now let us focus on the fourth assertion of the statement. For the same reason as in the previous paragraph, the candidates to be minimal above $S$ are those with Apéry lists of the form $\{0,w_1(S),\dots,w_{i-1}(S),w_i(S)-m,w_{i+1}(S),\dots,w_{m-1}(S)\}$. With the use of Lemma~\ref{lem:ap}, let us check which of these sets are Apéry sets of ideals in $\mathcal{I}_0(S)$. For all $j\neq i$ and for all $k$, we have that $w_j(S)+w_k(S)\ge w_{j+k}(S)$, if $(j+k)\mod m$ is not equal to $i$, since $S$ is a numerical semigroup. If $(j+k)\mod m$ is $i$, then we also have $w_j(S)+w_k(S)\ge w_i(S)\ge w_i(S)-m$. Also $w_i(S)-m+0\ge w_i(S)-m$, so it remains to check whether $w_i(S)-m+w_k(S)\ge w_{i+k}(S)$ for all $k\neq 0$. Notice that $(w_i(S)-m+w_k(S))\mod m=i+k$, and consequently $w_i(S)-m+w_k(S)\ge w_{i+k}(S)$ if and only if $w_i(S)-m+w_k(S)\in S$. Observe that $w_i(S)-m+w_k(S)\in S$ for all $k\neq 0$ if and only if $w_i(S)-m+s\in S$ for all $s\in S\setminus\{0\}$. Thus, $\{0,w_1(S),\dots,w_{i-1}(S),w_i(S)-m,w_{i+1}(S),\dots,w_{m-1}(S)\}$ is an Apéry list of an ideal in $\mathcal{I}_0(S)$ if and only if $w_i(S)-m\in \operatorname{PF}(S)$. Finally, observe that \[\{0,w_1(S),\dots,w_{i-1}(S),w_i(S)-m,w_{i+1}(S),\dots,w_{m-1}(S)\}+S=\{0,w_i(S)-m\}+S.\]

\item Let $I_0\subsetneq I_1\subsetneq \dots \subsetneq I_t$ be an ascending chain of ideals of $\mathcal{I}_0(S)$. Let $G_i=I_{i+1}\setminus I_i$. Then $G_i$ is a set of gaps, and $G_0\subsetneq G_0\cup G_1 \subsetneq \dots \subsetneq G_0\cup \dots \cup G_{t-1}$ is an increasing sequence of sets of gaps. In particular, $t\le \operatorname{g}(S)$. Now write $\mathbb{N}\setminus S=\{g_1>\dots>g_{\operatorname{g}(S)}\}$. Then the sequence of ideals $I_i=\{0,g_1,\dots,g_i\}+S$ is an increasing sequence of ideals (with respect to inclusion).
\end{proof}

\begin{remark}\label{rem:type-mins}
Notice that as consequence of Proposition~\ref{prop:mins-maxs}~(4), we recover the fact that every ideal in $\mathcal{I}_0(S)\setminus\{S\}$ contains a pseudo-Frobenius number (see the proof of Proposition~\ref{prop:lo-up-bound-clS-genus-type}). It also follows that \[\operatorname{t}(S)=|\operatorname{Minimals}_\subseteq (\mathcal{I}_0(S)\setminus\{S\})|.\]
\end{remark}

\begin{remark}\label{width}
We now want to find a lower bound for the width of the Hasse diagram of $\mathcal{I}_0(S)$ with respect to inclusion. This can be achieved by looking at the ``second level'' of the Hasse diagram (minimal non-zero ideals) yielding $\operatorname{m}(S)-1$, or by using the pigeonhole principle: by removing the maximum ($\mathbb{N}$) and minimum ($S$) from the Hasse diagram, having $\operatorname{g}(S)-1$ remaining ``levels'' we have that the width will be at least $\lceil(|\mathcal{I}_0(S)|-2)/(\operatorname{g}(S)-1)\rceil$. This bound is sharp and is attained for example in the case $S=\langle 3,5\rangle$, for which the width is 2. 
\end{remark}

\begin{example} Using the description of maximal elements in $\mathcal{I}_0(S)\setminus\{\mathbb{N}\}$, we can compute their reduction number. More precisely,
let $S$ be a numerical semigroup with multiplicity $m$, and let $E$ be a maximal element of $\mathcal{I}_0(S)\setminus\{\mathbb{N}\}$, that is, $E=\{0,1,\dots, i-1, i+m,i+1,\dots,m-1\}+S$. Notice that $E\subseteq E+E\subseteq \mathbb{N}$, and that $E\neq E+E$ if and only if $i\not\in E+E$. Consequently,
\[
\operatorname{r}(\{0,1,\dots, i-1, i+m,i+1,\dots,m-1\}+S)=\begin{cases}
    1 & \text{if } i=1,\\
    2 & \text{otherwise.}
\end{cases}
\]
\end{example}

\section{Irreducible elements, atoms, quarks and primes}\label{quarks}
Let $H$ be a monoid (in our setting commutative, and thus we use additive notation). For $a, b\in H$ define $a\preceq b$ if there exists $c\in H$ such that $b=a+c$. We write $a\prec b$ whenever $a\preceq b$ and $b\not\preceq a$. We say that $a\in H$ is a unit if there exists $b\in H$ such that $a+b=0$ (the identity element of $H$). Following \cite{t} (for our specific $\preceq$, $\preceq$-units are units), a non unit $a$ is said to be 
\begin{itemize}
    \item irreducible if $a\neq x + y$ for all non-units $x$ and $y$ of $H$ such that $x\prec a$ and $y\prec a$;
    \item an atom if $a\neq x + y$ for all non-units $x$ and $y$ of $H$;
    \item a quark if there is no non-unit $b$ with $b\prec a$;
    \item a prime if $a\preceq x + y$ for some $x,y\in H$ implies that $a\preceq x$ or $a\preceq y$.
\end{itemize}

Notice that $\mathcal{I}_0(S)$ is reduced (Proposition~\ref{prop:reduced}), that is, the only unit is its identity element, which in this case is $0+S=S$. Observe that $\mathcal{I}_0(S)$ is commutative, but it is not cancellative. Also if $I\preceq J$ in $\mathcal{I}_0(S)$, then $J=I+K$ for some ideal $K\in\mathcal{I}_0(S)$, whence $I\subseteq J$. Thus $I\preceq J\not\preceq I$ implies that $I\subsetneq J$, since $I=J$ would imply $J\preceq I$. This shows that $I\prec J$ implies that $I\preceq J$ and $I\neq J$. Now suppose that $I\preceq J$ and that $I\neq J$. If $J\preceq I$, then $I\subseteq J$ and $J\subseteq I$, forcing $I=J$, a contradiction. Hence, in our setting,
\[
I\prec J \text{ if and only if } I\preceq J \text{ and } I\neq J.
\]

\begin{remark}\label{rem:heightHassesum}
    Notice that if we have a chain of ideals $I_0\prec I_1 \prec \dots \prec I_t$ in $\mathcal{I}_0(S)$, then $I_0\subsetneq I_1\subsetneq \dots \subsetneq I_t$, and consequently $t\le \operatorname{g}(S)$ by Proposition~\ref{prop:mins-maxs}~(5). Notice also that if $g_1>\dots>g_{\operatorname{G}(S)}$ are the gaps of $S$, then $(\{0,g_{i+1}\}+S)+(\{0,g_1,\dots,g_{i}\}+S)=\{0,g_1,\dots,g_{i+1}\}+S$. This means that $\{0,g_1,\dots,g_{i}\}+S\prec \{0,g_1,\dots,g_{i+1}\}+S$ for all suitable $i$, which is precisely the chain used in the proof of Proposition~\ref{prop:mins-maxs}~(5), meaning that the largest length of a strictly increasing chain for ideals in $\mathcal{I}_0(S)$ with respect to $\preceq$ has also length $\operatorname{g}(S)+1$.
\end{remark}

With the help of this remark we can prove the following result.

\begin{corollary}\label{cor:isom-clS}
Let $S_1$ and $S_2$ be numerical semigroups. If $(\mathcal{I}_0(S_1),+)$ is isomorphic to $(\mathcal{I}_0(S_2),+)$, then  $\operatorname{g}(S_1)=\operatorname{g}(S_2)$.
\end{corollary}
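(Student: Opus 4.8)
The plan is to show that the genus can be recovered purely from the monoid structure of $\mathcal{I}_0(S)$, so that an isomorphism necessarily preserves it. The key observation, supplied by Remark~\ref{rem:heightHassesum}, is that $\operatorname{g}(S)$ equals the length of the longest strictly $\prec$-increasing chain of elements in $\mathcal{I}_0(S)$ (the maximal chain $I_0 \prec I_1 \prec \dots \prec I_{\operatorname{g}(S)}$ has $\operatorname{g}(S)+1$ elements, hence length $\operatorname{g}(S)$). Since $\prec$ is defined entirely in terms of the monoid operation $+$ (recall $a \prec b$ means $a \preceq b$ and $a \neq b$, and $a \preceq b$ means $b = a + c$ for some $c$), this quantity is a monoid-theoretic invariant.

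First I would make explicit that the relation $\preceq$, and hence $\prec$, is intrinsic to the monoid $(\mathcal{I}_0(S),+)$: it does not refer to the numerical semigroup $S$ or to inclusion of ideals, only to the existence of a summand realizing $b = a + c$. Consequently, any monoid isomorphism $\varphi \colon (\mathcal{I}_0(S_1),+) \to (\mathcal{I}_0(S_2),+)$ is automatically an isomorphism of the associated posets $(\mathcal{I}_0(S_1),\preceq)$ and $(\mathcal{I}_0(S_2),\preceq)$: indeed $b = a+c$ holds in $\mathcal{I}_0(S_1)$ if and only if $\varphi(b) = \varphi(a) + \varphi(c)$ holds in $\mathcal{I}_0(S_2)$, so $\varphi$ preserves and reflects $\preceq$, and therefore also $\prec$.

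Next I would transport chains through $\varphi$. A strictly increasing chain $I_0 \prec I_1 \prec \dots \prec I_t$ in $\mathcal{I}_0(S_1)$ maps to the chain $\varphi(I_0) \prec \varphi(I_1) \prec \dots \prec \varphi(I_t)$ in $\mathcal{I}_0(S_2)$ of the same length, and symmetrically for $\varphi^{-1}$. Hence the maximal length of a strictly $\prec$-increasing chain is the same in both monoids. By Remark~\ref{rem:heightHassesum} this common maximal length equals $\operatorname{g}(S_1)$ on one side and $\operatorname{g}(S_2)$ on the other, yielding $\operatorname{g}(S_1) = \operatorname{g}(S_2)$.

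There is no serious obstacle here; the proof is essentially a two-line verification once Remark~\ref{rem:heightHassesum} is in hand. The only point requiring a little care is the first step, namely confirming that $\prec$ is a genuinely algebraic (monoid-internal) notion so that it is automatically preserved by any isomorphism; this is immediate from the definitions but is the conceptual crux, since the characterization of $\operatorname{g}(S)$ as a longest-chain length in Remark~\ref{rem:heightHassesum} was itself phrased in terms of $\prec$ rather than inclusion precisely to make this transfer possible.
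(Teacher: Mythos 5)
Your proof is correct and follows essentially the same route as the paper: both invoke Remark~\ref{rem:heightHassesum} to identify $\operatorname{g}(S)$ with the maximal length of a strictly $\prec$-increasing chain, and both observe that a monoid isomorphism preserves $\preceq$ and hence transports such chains. Your write-up is in fact slightly more careful than the paper's, since you make explicit the symmetric use of $\varphi^{-1}$ needed to conclude that the \emph{maximal} chain lengths agree.
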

\begin{proof}
Let $\varphi:\mathcal{I}_0(S_1)\to \mathcal{I}_0(S_2)$ be a monoid isomorphism. For every $I,J\in\mathcal{I}_0(S_1)$, $I\preceq J$ if and only if $\varphi(I)\preceq \varphi(J)$. Thus $\varphi$ maps strictly ascending chains in $\mathcal{I}_0(S_1)$ to strictly ascending chains in $\mathcal{I}_0(S_2)$. By Remark~\ref{rem:heightHassesum}, there is a strictly increasing chain in $\mathcal{I}_0(S_1)$ of length $\operatorname{g}(S_1)+1$, $I_0\prec \dots \prec I_{\operatorname{g}(S_1)}$. Then $\varphi(I_0)\prec \dots \prec \varphi(I_{\operatorname{g}(S)})$ is a maximal strictly ascending chain in $\mathcal{I}_0(S_2)$. In particular, this implies that both $S_1$ and $S_2$ have the same genus. 
\end{proof}

\begin{example}\label{ex:irred-atom-quark-prime}
Let us show what are the irreducible elements, atoms, quarks and primes of $\mathcal{I}_0(S)$, for $S=\langle 5,6,8,9\rangle$. 
The set of irreducible elements of $\mathcal{I}_0(S)$ is
\[
\big\{ \{ 0, 1 \}+S, \{ 0, 2 \}+S, \{ 0, 3 \}+S, \{ 0, 4 \}+S, \{ 0, 7 \}+S, \{ 0, 1, 3 \}+S, \{ 0, 3, 4 \}+S \big\}.
\]
The only atom is $\{0,3,4\}+S$, and the set of quarks is 
\[
\big\{\{ 0, 3, 4 \}+S, \{ 0, 3 \}+S, \{ 0, 4 \}, \{ 0, 7 \}+S\big\}.
\]


Observe that in general, there are more quarks than minimal non-zero ideals (with respect to inclusion). There are no prime elements in $\mathcal{I}_0(S)$.
For instance, let $I=\{0,7\}+S$ and $J=\{0,3,4\}+S$. Then $I+J+J=J+J$, and so $I\preceq J+J$, but $I\not\preceq J$ since $I$ is not included in $J$.

\end{example}

\begin{lemma}
    Let $I$ be $\mathcal{I}_0(S)$. Then $I$ is irreducible if and only if $I \neq J+K$ for any non-zero $J,K\in \mathcal{I}_0(S)\setminus\{I\}$.
\end{lemma}
\begin{proof}
    Suppose that $I$ is irreducible and that $I=J+K$ for some non-zero elements in $\mathcal{I}_0(S)$ with $J\ne I\ne K$. As $I=J+K$ and $J\ne I\ne K$, we have $J\prec I$ and $K\prec I$, contradicting that $I$ is irreducible.

The sufficiency is straightforward.
\end{proof}

The following results ensures that we have at least as many irreducible elements in $\mathcal{I}_0(S)$ as gaps in $S$.

\begin{proposition}\label{prop:irreds-two-el}
Let $S$ be a numerical semigroup. For any gap $g$ of $S$, the ideal $\{0,g\}+S$ is irreducible in $\mathcal{I}_0(S)$.    
\end{proposition}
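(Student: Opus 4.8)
The plan is to use the characterization of irreducibility just established in the preceding lemma: the ideal $E=\{0,g\}+S$ is irreducible if and only if it cannot be written as $J+K$ with $J,K\in\mathcal{I}_0(S)\setminus\{S\}$ and $J\neq E\neq K$. So I would suppose, for contradiction, that $E=J+K$ with $J$ and $K$ nonzero (i.e.\ different from the identity $S$) and both different from $E$, and derive a contradiction. The key structural fact is that $E$ has a very small minimal generating set: by the discussion following Proposition~\ref{prop:reduced}, every ideal in $\mathcal{I}_0(S)$ is generated by $\{0\}$ together with a set of gaps, and here that set of gaps is the single element $\{g\}$, so $E$ has exactly two minimal generators, $0$ and $g$.

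The main step is to analyze the generators of $J+K$. Since $\{0\}\cup X_J$ and $\{0\}\cup X_K$ are the minimal generating sets of $J$ and $K$ (with $X_J,X_K$ sets of gaps), the sum $J+K$ is generated by $\{x+y : x\in\{0\}\cup X_J,\ y\in\{0\}\cup X_K\}$. In particular $X_J\cup X_K$ (coming from adding $0$ from one summand) is contained in $J+K=E$, and since $J\subseteq E$ and $K\subseteq E$ with $E=S\cup\{g\}+\ldots$ having only the gap $g$ outside $S$ (more precisely $E\setminus S\subseteq (g+\mathbb{N})\cap(\mathbb N\setminus S)$ but with the minimal generator being $g$ alone), the only possible gap generator available to $J$ and $K$ is $g$ itself. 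The plan is to show that each of $J,K$ must then be either $S$ or $E$: if $X_J=\emptyset$ then $J=S$, contradicting $J\neq S$; and if $g\in X_J$ then, because $g$ is a single gap generating $E$ over $S$, we get $J=E$, contradicting $J\neq E$. The same dichotomy applies to $K$, so no admissible factorization exists.

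The hard part will be handling the containment $J,K\subseteq E$ carefully and ruling out that $J$ or $K$ could contain a gap strictly $\le_S$-above $g$ that is nonetheless a gap of $S$ lying in $E$. I would argue that $E=\{0,g\}+S$ means $E\setminus S=\{g+s : s\in S,\ g+s\notin S\}$, and any gap in $E$ is of the form $g+s$ with $s\in S$; such a gap, if used as a generator of $J$, forces $g\preceq_S$ that element, so when summed it still produces something $\ge_S g$, and one checks that the minimal generator $g$ of $E$ can only be produced with a summand equal to $0$ on one side, pinning that summand's gap contribution to exactly $g$. Concretely, since $g\in E=J+K$ and $g=\min(E\setminus S)$ with $g$ a minimal generator, writing $g=j+k$ with $j\in J,k\in K$ forces (by minimality and the fact that both $j,k\ge 0$ with at most one of them a positive gap contribution) one of $j,k$ to be $0$ and the other to be $g$; this shows $g$ is a generator of $J$ or of $K$, and combined with the containment in $E$ this yields $J=E$ or $K=E$. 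Once this case analysis is complete, the lemma characterization gives irreducibility immediately.
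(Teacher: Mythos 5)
Your proposal is correct and is essentially the paper's own argument: both rest on the containments $J,K\subseteq\{0,g\}+S$, the fact that every element of $(\{0,g\}+S)\setminus S$ is of the form $g+s$ with $s\in S$, and a positivity contradiction extracted from a representation $g=j+k$ with $j\in J$, $k\in K$, which forces one summand to be $0$ and the other to be $g$, so that $J$ or $K$ contains $\{0,g\}$ and hence equals the whole ideal. One caution: your intermediate assertion that ``the only possible gap generator available to $J$ and $K$ is $g$ itself'' is false as stated (gaps of the form $g+s$ can perfectly well be minimal generators of ideals contained in $\{0,g\}+S$), but your final case analysis never uses it, so the proof stands.
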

\begin{proof}
 Suppose by contradiction that this is not the case, and let $I,J\in\mathcal{I}_0(S)\setminus\{S, \{0,g\}+S\}$ be such that $\{0,g\}+S=I+J$. Observe that $I+J=\{0,g\}+S$ forces both $I$ and $J$ to be included in $\{0,g\}+S$. It follows that $g$ is neither in $I$ nor in $J$, and so $g=i+j$ for some non-zero elements $i\in I$ and $j\in J$. But then $i,j\in \{0,g\}+S$. Notice that neither $i$ nor $j$ can be in $S$, since otherwise this would imply that either $\{0,g\}+S\subseteq I$ of $\{0,g\}+S\subseteq J$, contradicting that $I\subsetneq \{0,g\}+S$ and $J\subsetneq \{0,g\}+S$. From $i\in \{0,g\}+S$, we then deduce that $i=g+s$, with $s\in S\setminus\{0\}$, but then $i+j=g+s+j>g$, a contradiction.    
\end{proof}

Notice that in general there are irreducible elements that are not of the form $\{0,g\}+S$ with $g$ a gap of $S$, see Example~\ref{ex:irred-atom-quark-prime} or Figure~\ref{fig:id-4-6-9}.

\begin{figure}
    \centering
    \includegraphics[scale=0.4]{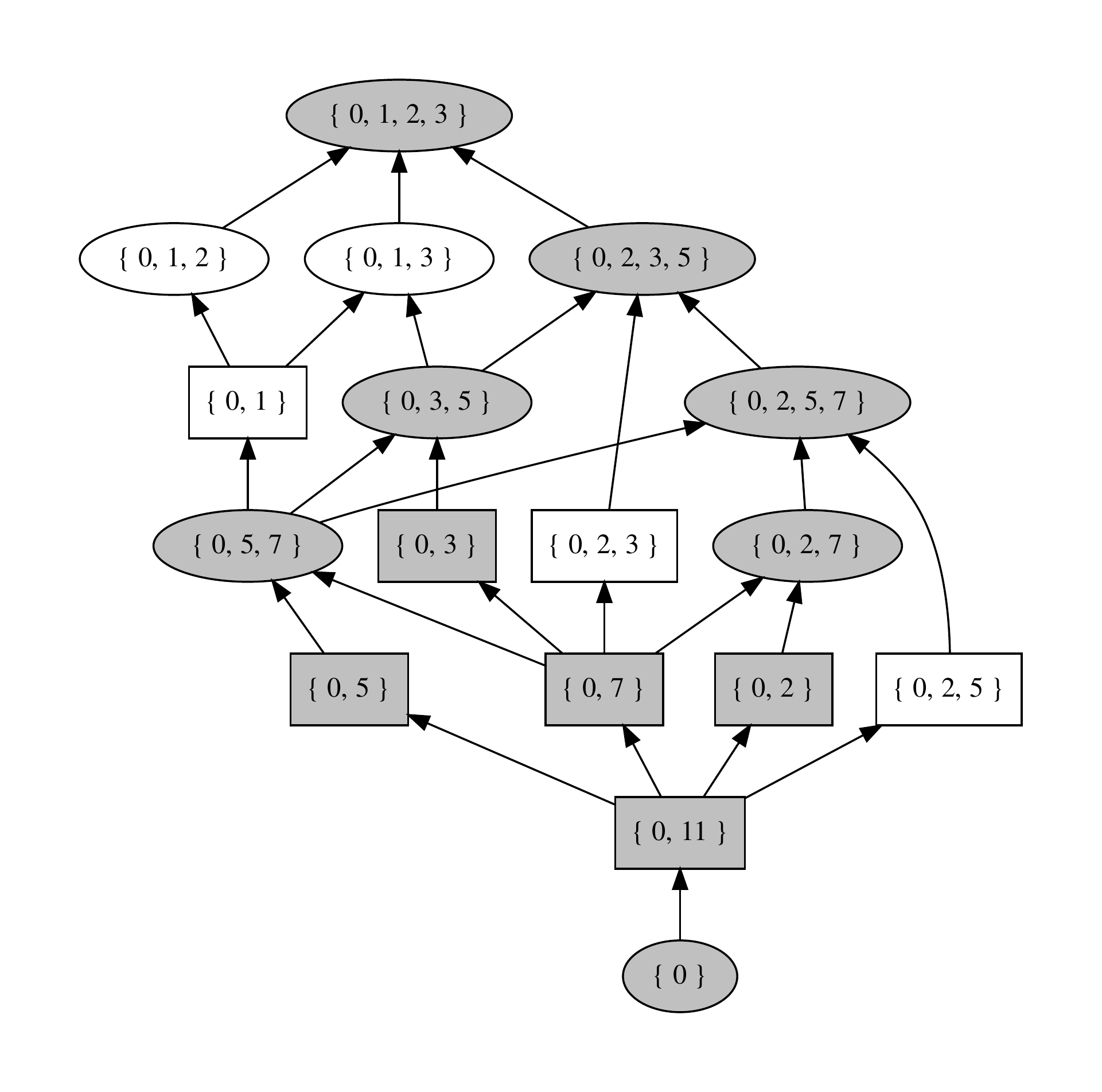}
    \includegraphics[scale=0.4]{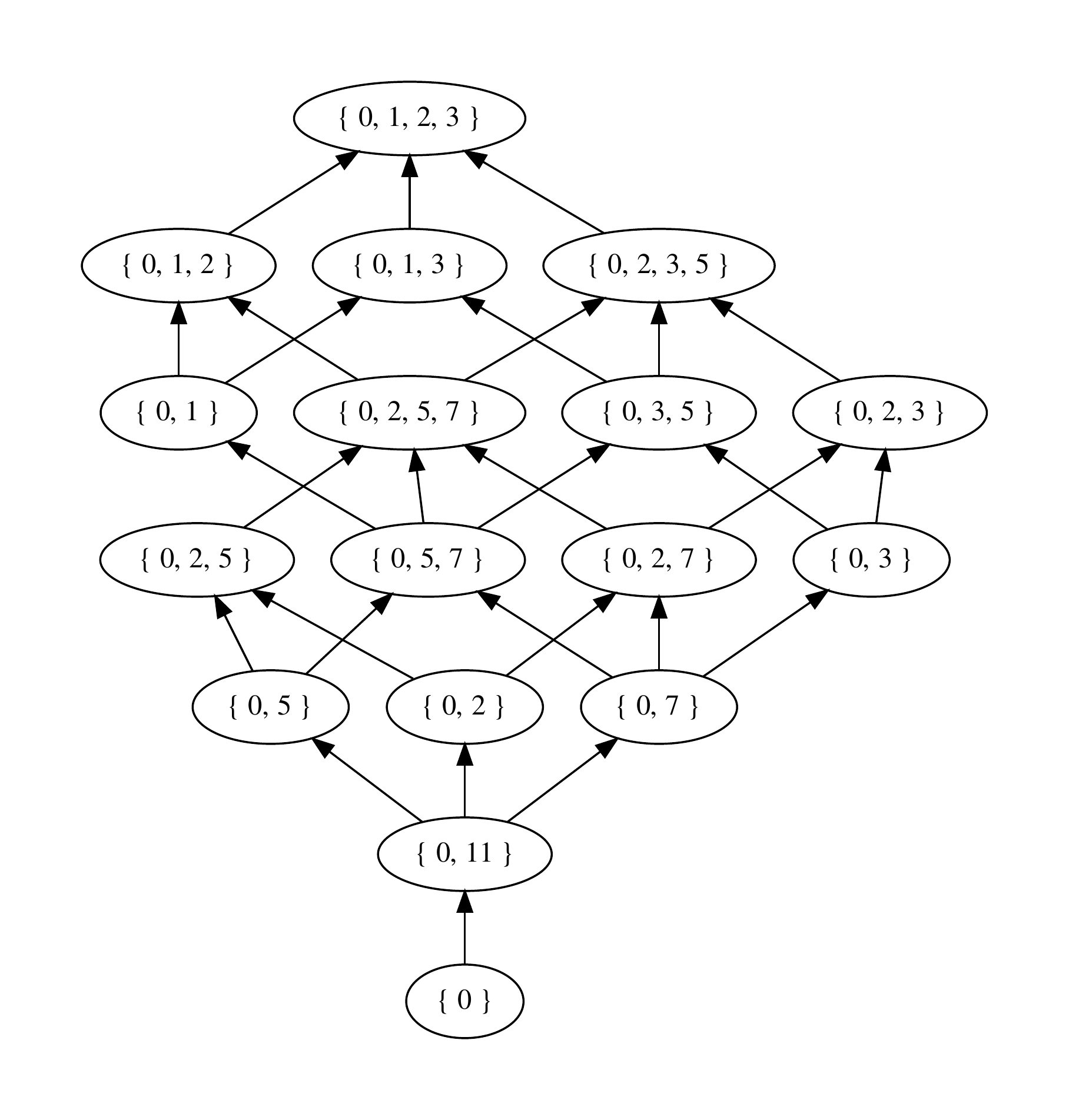}    
\caption{Hasse diagram of $\mathcal{I}_0(\langle 4,6,9\rangle)$ arranged by $\preceq$ (irreducibles in boxes; idempotents filled in gray) and by inclusion}
    \label{fig:id-4-6-9}
\end{figure}

\begin{proposition}
    Let $S$ be a numerical semigroup. Then every ideal of $\mathcal{I}_0(S)$ is a sum of irreducible ideals.
\end{proposition}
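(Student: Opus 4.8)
The plan is to argue by a finiteness induction, exploiting that the relation $\prec$ forces strict inclusion and hence strictly decreases a natural integer measure. Since every $I\in\mathcal{I}_0(S)$ satisfies $S\subseteq I\subseteq\mathbb{N}$ and $\mathbb{N}\setminus S$ is finite, the quantity $|I\setminus S|$ is a well-defined non-negative integer, and I will perform strong induction on it. (Equivalently, one may invoke the well-foundedness of $\subsetneq$ on the finite poset $\mathcal{I}_0(S)$.) The base case is the minimum element $S$, which is the identity and the only unit by Proposition~\ref{prop:reduced}; it is the empty sum of irreducibles, so nothing needs to be proved there.

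For the inductive step I would fix $I\neq S$ and assume the claim holds for every ideal $J'$ with $|J'\setminus S|<|I\setminus S|$. If $I$ is irreducible, then it is trivially a one-term sum of irreducibles and we are done. Otherwise, by the preceding Lemma there exist non-zero $J,K\in\mathcal{I}_0(S)\setminus\{I\}$ with $I=J+K$. The equality $I=J+K$ gives $J\preceq I$ and $K\preceq I$ directly from the definition of $\preceq$, and since $J\neq I$ and $K\neq I$, the discussion preceding Remark~\ref{rem:heightHassesum} (namely $I\prec J$ iff $I\preceq J$ and $I\neq J$, together with $I\preceq J\Rightarrow I\subseteq J$) yields $J\prec I$ and $K\prec I$, hence $J\subsetneq I$ and $K\subsetneq I$.

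Because $S\subseteq J\subsetneq I$ and $S\subseteq K\subsetneq I$, we obtain $|J\setminus S|<|I\setminus S|$ and $|K\setminus S|<|I\setminus S|$. The induction hypothesis then expresses both $J$ and $K$ as sums of irreducibles, and concatenating these two expressions exhibits $I=J+K$ as a sum of irreducibles, completing the induction.

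This is essentially the standard atomicity proof for a reduced monoid admitting no infinite strictly descending $\prec$-chain, so I do not anticipate any deep obstacle. The one step deserving care — and the crux that makes the induction run — is guaranteeing the strict descent $J,K\subsetneq I$; this is exactly what the characterization of $\prec$ recalled above supplies, and it is the reason the Lemma's hypotheses $J,K\neq S$ and $J,K\neq I$ are precisely the right ones. The finiteness of $\mathcal{I}_0(S)$ and that Lemma do all the work, with no genuinely difficult estimate involved.
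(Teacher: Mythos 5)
Your proof is correct and takes essentially the same approach as the paper: the paper's own proof is the one-line observation that there are no infinite strictly descending chains in $(\mathcal{I}_0(S),\preceq)$, and your strong induction on $|I\setminus S|$, together with the decomposition $I=J+K$ with $J,K\prec I$ furnished by the preceding lemma, is exactly a concrete implementation of that well-foundedness argument. You have simply written out the details the paper leaves implicit.
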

\begin{proof}
    The proof follows from the fact there are not infinite strictly descending chains of ideals with respect to $\preceq$.
\end{proof}

If $I\in \mathcal{I}_0(S)$ and $I=I_1+\dots+I_k$ for some $I_1,\dots,I_k$ irreducible elements in $\mathcal{I}_{0}(S)$, then we say that this expression is a factorization (of length $k$) of $I$ into irreducible elements. We say that this factorization is minimal if $I$ cannot be expressed as $\sum_{j\in A} I_j$ with $A\subsetneq \{1,\dots,k\}$.

\begin{example}
    Let $S=\langle 4,6,9\rangle$, and let $I=\{0,1,3\}+S$. Then $I=( \{ 0, 1 \} +S)+ (\{ 0, 3 \}+S )$ is a decomposition of $I$ as sum of irreducible elements in $\mathcal{I}_0(S)$. For $J\in \{ \{0,3\}+S, \{0,5\}+S,\{0,7\}+S,\{0,11\}+S\}$, we have that $I+J=J$. This, in particular, means that $I$ admits factorizations with $n$ irreducible elements for all $n\ge 2$. 
    
    The monoid $\mathcal{I}_0(S)$ does not have the property of having unique minimal factorizations. For instance, $\{0,2,5,7\}+S=(\{0,2,5\}+S)+(\{0,2,5\}+S)=(\{0,2\}+S)+(\{0,5\}+S)$.

    Not all minimal factorizations of an ideal in $\mathcal{I}_0(S)$ need to have the same length. For $S=\langle 5,6,8,9\rangle$, the ideal $I=\{0,2,3,4\}+S$ has minimal factorizations of length two and three. As $I+(\{0,7\}+S)=I$, one can find factorizations of $I$ as sums of $n$ irreducible elements for any integer $n$ greater than one.

    There are examples of non-irreducible ideals with a finite number of factorizations. Take $S=\langle 5, 16, 17, 18, 19\rangle$ and $I= \{0,1,2\}+S$. Then the only factorization of $I$ as sum of irreducible elements is $I=2(\{0,1\}+S)$. This is due to the fact that the only ideal $J$ such that $J+I=I$ is $J=0+S=S$.

\end{example}

\begin{corollary}
    Let $S$ be a numerical semigroup. Then $\mathcal{I}_0(S)$ is cyclic (there exists $I\in \mathcal{I}_0(S)$ such that $\mathcal{I}_0(S)=\{ kI : k\in \mathbb{N}\}$) if and only if $S=\langle 2,3\rangle$.
\end{corollary}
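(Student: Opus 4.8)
The plan is to prove the two implications separately, with essentially all of the content in the forward direction; the backward direction is a one-line computation.

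For the sufficiency, I would compute $\mathcal{I}_0(\langle 2,3\rangle)$ directly. Since $\langle 2,3\rangle=\{0,2,3,\dots\}$ has a single gap, namely $1$, the only ideals with minimum $0$ are $S$ itself and $\{0,1\}+S=\mathbb{N}$. Taking $I=\mathbb{N}$ we get $0I=S$ and $kI=\mathbb{N}$ for all $k\ge 1$, so $\mathcal{I}_0(S)=\{kI : k\in\mathbb{N}\}$ and the monoid is cyclic. (I would assume $S\neq\mathbb{N}$ throughout the necessity argument, the case $S=\mathbb{N}$ giving the degenerate one-element monoid.)

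For the necessity, suppose $\mathcal{I}_0(S)=\{kI : k\in\mathbb{N}\}$ for some $I$. The key observation I would exploit is that $0\in I$ (as $\min(I)=0$), so $kI\subseteq(k+1)I$ for every $k$, and hence the powers of $I$ form an ascending chain $S=0I\subseteq I\subseteq 2I\subseteq\cdots$ for inclusion. Since these powers exhaust $\mathcal{I}_0(S)$, the \emph{whole} monoid is totally ordered by inclusion. Removing the maximum $\mathbb{N}$ (Proposition~\ref{prop:mins-maxs}~(1)) then leaves a chain with a unique maximal element; but Proposition~\ref{prop:mins-maxs}~(3) says the number of maximal elements of $\mathcal{I}_0(S)\setminus\{\mathbb{N}\}$ equals $\operatorname{m}(S)-1$, so I conclude $\operatorname{m}(S)=2$.

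To pin down the genus I would count the distinct powers of $I$. By definition of the reduction number, $kI\subsetneq(k+1)I$ for $k<\operatorname{r}(I)$, so the distinct powers are exactly $S=0I\subsetneq I\subsetneq\cdots\subsetneq \operatorname{r}(I)I$, with $\operatorname{r}(I)I=\mathbb{N}$ since $\mathbb{N}$ is the inclusion-largest power; hence $|\mathcal{I}_0(S)|=\operatorname{r}(I)+1$. Now \eqref{eq:lbound-cS-genus} gives $|\mathcal{I}_0(S)|\ge \operatorname{g}(S)+1$, while Proposition~\ref{prop:basic-reduction}~(2) gives $\operatorname{r}(I)\le \operatorname{m}(S)-1=1$. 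Combining, $\operatorname{g}(S)+1\le\operatorname{r}(I)+1\le 2$, so $\operatorname{g}(S)\le 1$; as $\operatorname{m}(S)=2$ forces $S\neq\mathbb{N}$ we get $\operatorname{g}(S)=1$, and the unique numerical semigroup of genus one is $\langle 2,3\rangle$. The main obstacle I expect is the bookkeeping at this last step: one must be careful that cyclicity genuinely forces the total order (so Proposition~\ref{prop:mins-maxs}~(3) applies and yields $\operatorname{m}(S)=2$) and that the distinct powers of $I$ are counted correctly via the reduction number; once these two facts are secured, the numerical bounds close the argument immediately.
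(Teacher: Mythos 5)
Your proof is correct, but it follows a genuinely different route from the paper's. The paper's own proof is a two-line application of the factorization machinery of Section~\ref{quarks}: cyclicity forces $\mathcal{I}_0(S)$ to have exactly one irreducible element (every higher power decomposes as $kI = I + (k-1)I$ with both summands strictly $\prec$-smaller non-units), while Proposition~\ref{prop:irreds-two-el} produces a distinct irreducible $\{0,g\}+S$ for every gap $g$; hence $S$ has at most one gap, and so $S=\langle 2,3\rangle$. You avoid irreducibles entirely: from the nesting $kI\subseteq (k+1)I$ you deduce that inclusion totally orders $\mathcal{I}_0(S)$, so Proposition~\ref{prop:mins-maxs}~(3) pins the multiplicity at $2$; then counting the distinct powers via the reduction number (giving $|\mathcal{I}_0(S)|=\operatorname{r}(I)+1$), combined with the bound $\operatorname{r}(I)\le \operatorname{m}(S)-1$ of Proposition~\ref{prop:basic-reduction}~(2) and the lower bound \eqref{eq:lbound-cS-genus}, forces $\operatorname{g}(S)=1$. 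Each step checks out: the strict inclusions $kI\subsetneq (k+1)I$ for $k<\operatorname{r}(I)$ and the stabilization of powers at $\operatorname{r}(I)$ are exactly what the definition of reduction number gives, and the elements of Proposition~\ref{prop:mins-maxs}~(3) are pairwise distinct, so the count $\operatorname{m}(S)-1$ is legitimate. What each approach buys: the paper's is shorter and situates the result inside its factorization theory, whereas yours is more quantitative, establishing the stronger intermediate facts that cyclicity makes $(\mathcal{I}_0(S),\subseteq)$ a chain and that $|\mathcal{I}_0(S)|=\operatorname{r}(I)+1$. One shared caveat: strictly speaking, the one-element monoid $\mathcal{I}_0(\mathbb{N})$ is cyclic (generated by its identity), so the ``only if'' direction needs $S\neq\mathbb{N}$ excluded by convention; the paper glosses over this just as you do, though you at least flag the degenerate case explicitly.
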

\begin{proof}
Notice that cyclic implies that there is just one irreducible, and this forces $S$ to have just one gap. The other implication is easy to check.
\end{proof}

\begin{example}\label{ex:2-b}
    Let $b$ be an odd integer, and set $S=\langle 2,b\rangle$. Let us see how $\mathcal{I}_0(S)$ looks like. Notice that $\operatorname{G}(S)=\{1,3,\dots,b-2\}$, and that any two gaps are comparable modulo $2$. Thus, $\mathcal{I}_0(S)=\{\{0,g\}+ S : g\in \operatorname{G}(S)\}\cup\{S\}$, and by Proposition~\ref{prop:irreds-two-el}, every non-zero ideal is irreducible. Inclusion is a total ordering in $(\mathcal{I}_0(S),\subseteq)$. Also, notice that if $g_1$ and $g_2$ are gaps of $S$ with $g_1\le g_2$, then $(\{0,g_1\}+S)+(\{0,g_2\}+S)=\{0,g_1\}+S$. This, in particular implies that every non-zero ideal is also a prime ideal. There are no atoms in $\mathcal{I}_0(S)$. The only quark is $\{0,b-2\}+S$.
\end{example}

\begin{lemma}\label{lem:mins-are-quarks}
Let $S$ be a numerical semigroup, and let $I$ be a minimal non-zero ideal in $\mathcal{I}_0(S)$. Then $I$ is a quark.
\end{lemma}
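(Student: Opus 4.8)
The plan is to show directly from the definitions that a minimal non-zero ideal $I \in \mathcal{I}_0(S)$ cannot have any non-zero ideal strictly $\preceq$-below it. Recall from the discussion preceding this lemma that for ideals in $\mathcal{I}_0(S)$, the relation $J \prec I$ is equivalent to $J \preceq I$ together with $J \neq I$, and that $J \preceq I$ implies $J \subseteq I$. So to prove $I$ is a quark, I must show that there is no non-unit (i.e.\ non-zero, meaning distinct from the identity $S$) ideal $J$ with $J \prec I$.

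First I would suppose, for contradiction, that $J$ is a non-zero ideal with $J \prec I$. By the characterization of $\prec$ recalled above, this gives $J \subsetneq I$ with $J \neq S$, and moreover $I = J + K$ for some $K \in \mathcal{I}_0(S)$. The key observation is that $S \subseteq J$ always holds (since $0 \in J$ forces $S = 0 + S \subseteq J$), so $J$ sits strictly between $S$ and $I$ in the inclusion order: $S \subsetneq J \subsetneq I$. But $I$ is by hypothesis a \emph{minimal} non-zero ideal, i.e.\ minimal in $\mathcal{I}_0(S) \setminus \{S\}$ with respect to inclusion, which means there is no ideal strictly between $S$ and $I$. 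This directly contradicts the existence of $J$.

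The only point requiring a little care is confirming that $J$ being a non-unit (non-zero) element with $J \prec I$ genuinely produces an ideal strictly containing $S$ and strictly contained in $I$; this is exactly where I use that $J \neq S$ (so $S \subsetneq J$, as $S \subseteq J$ always) and that $J \prec I$ forces $J \subsetneq I$. Once these two strict inclusions are in hand, minimality of $I$ is immediately violated. I expect no genuine obstacle here: the entire argument is a short unwinding of the definition of quark into the inclusion order, combined with the standing fact that every ideal in $\mathcal{I}_0(S)$ contains $S$. The main thing to get right is simply quoting the correct equivalence $J \prec I \iff (J \preceq I \text{ and } J \neq I)$ and the implication $J \preceq I \Rightarrow J \subseteq I$ from the paragraph introducing $\preceq$.
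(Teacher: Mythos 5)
Your proof is correct and follows essentially the same route as the paper's: the paper likewise observes that a non-zero $J$ with $J\prec I$ would satisfy $J\subsetneq I$, contradicting the minimality of $I$ in $\mathcal{I}_0(S)\setminus\{S\}$ with respect to inclusion. Your write-up merely makes explicit the intermediate facts (that $S\subseteq J$ always holds and that $\prec$ implies strict inclusion) which the paper leaves to the preceding discussion.
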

\begin{proof}
    If $I$ is not a quark, then there must be another non-zero $J\in \mathcal{I}_0(S)$ such that $J\prec I$, which would imply that $J\subsetneq I$, a contradiction.
\end{proof}

\begin{example}
    Let $S=\langle  6, 8, 17, 19, 21\rangle$. Then the set of quarks of $\mathcal{I}_0(S)$ is \[\big\{\{ 0, 2, 4, 5 \}+S, \{ 0, 10 \}+S, \{ 0, 11 \}+S, \{ 0, 13 \}+S, \{ 0, 15 \}+S\big\}.\]
    Thus, there are quarks that are not non-zero minimal ideals, and some of them might be generated by elements that are not pseudo-Frobenius numbers.

    In this example, for $Q=\{0,2,4,5\}+S$, the set $\{E\in \mathcal{I}_0(S) : E\subseteq Q\}$ has 25 elements. So $Q$ is far from being minimal.

    For $S=\langle 9,10,\dots, 17\rangle$, we have $\operatorname{t}(S)=8$ (and thus $8$ minimal non-zero ideals with respect to inclusion), while $\mathcal{I}_0(S)$ has 42 quarks.
\end{example}

Next we show that special gaps of the numerical semigroup $S$ are in one-to-one correspondence with idempotent quarks of $\mathcal{I}_0(S)$.

\begin{proposition}\label{prop:idempotent-quarks}
Let $S$ be a numerical semigroup. Then $Q$ is an idempotent quark of $\mathcal{I}_0(S)$ if and only if $Q=\{0,f\}+S$ with $f\in \operatorname{SG}(S)$. 
\end{proposition}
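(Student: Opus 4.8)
The plan is to prove the proposition via a double implication, characterising idempotent quarks of $\mathcal{I}_0(S)$ exactly as the ideals $\{0,f\}+S$ with $f$ a special gap. Recall from the excerpt that $\operatorname{SG}(S)=\{f\in\operatorname{PF}(S):2f\in S\}$, and that by Proposition~\ref{prop:irreds-two-el} every ideal $\{0,g\}+S$ with $g$ a gap is irreducible. The idempotency and the quark condition will be handled somewhat separately, using the two-generator structure to keep the arithmetic tractable.

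\medskip
\emph{Sufficiency.} First I would assume $f\in\operatorname{SG}(S)$ and set $Q=\{0,f\}+S$. To see $Q$ is idempotent, note that $Q+Q=\{0,f,2f\}+S$ by the computation in Proposition~\ref{prop:red-two-gens}, and since $f\in\operatorname{SG}(S)$ gives $2f\in S$, we have $2f+S\subseteq S\subseteq Q$, so $Q+Q=\{0,f\}+S=Q$. (Equivalently, by the Example after Proposition~\ref{prop:basic-reduction}, $\operatorname{r}(Q)=1$ exactly when $2f\in S$.) To see $Q$ is a quark, I must show there is no non-unit $J\in\mathcal{I}_0(S)$ with $J\prec Q$; equivalently, since $J\prec Q$ forces $S\subsetneq J\subsetneq Q$, I must show no non-zero ideal lies strictly between $S$ and $Q$. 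The only gap in $Q\setminus S$ is the class $f+S$ intersected with the gaps, but more carefully: any $J$ with $S\subseteq J\subseteq Q$ has $J\setminus S\subseteq Q\setminus S$, and because $f\in\operatorname{PF}(S)$ the only new element the ideal $Q$ contributes below the conductor that is a gap is $f$ itself. Thus $J=S$ or $J=Q$, so $Q$ is a minimal non-zero ideal, and minimal non-zero ideals are quarks by Lemma~\ref{lem:mins-are-quarks}. Indeed by Proposition~\ref{prop:mins-maxs}~(4) the minimal non-zero ideals are exactly $\{0,f\}+S$ with $f\in\operatorname{PF}(S)$, so $Q$ is among them.

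\medskip
\emph{Necessity.} Conversely, suppose $Q$ is an idempotent quark. Being a quark means no non-unit lies strictly $\preceq$-below $Q$; in particular $Q$ must be a minimal non-zero ideal with respect to inclusion, for if $S\subsetneq J\subsetneq Q$ then $J\prec Q$. By Proposition~\ref{prop:mins-maxs}~(4), the minimal non-zero ideals are precisely $\{0,f\}+S$ with $f\in\operatorname{PF}(S)$, so $Q=\{0,f\}+S$ for some pseudo-Frobenius number $f$. It remains to upgrade $f\in\operatorname{PF}(S)$ to $f\in\operatorname{SG}(S)$, which by the recalled characterisation amounts to showing $2f\in S$. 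Here I invoke idempotency: $Q=Q+Q=\{0,f,2f\}+S$, so $2f\in Q=\{0,f\}+S$. If $2f\in f+S$ then $f\in S$, contradicting that $f$ is a gap; hence $2f\in S$, giving $f\in\operatorname{SG}(S)$.

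\medskip
The main obstacle I anticipate is the quark verification in the sufficiency direction: one must be careful that ``quark'' refers to the divisibility order $\preceq$ rather than inclusion, so the cleanest route is to establish that $Q$ is a \emph{minimal non-zero ideal under inclusion} and then quote Lemma~\ref{lem:mins-are-quarks}, rather than arguing directly about $\preceq$. The rest reduces to the elementary observation $2f\in Q\iff 2f\in S$ (since $2f\in f+S$ would force $f\in S$), which ties idempotency to the special-gap condition cleanly in both directions.
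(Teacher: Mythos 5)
Your sufficiency half is correct and is essentially the paper's own argument: $f\in\operatorname{PF}(S)$ makes $Q=\{0,f\}+S$ a minimal non-zero ideal (Proposition~\ref{prop:mins-maxs}~(4)), hence a quark by Lemma~\ref{lem:mins-are-quarks}, and $2f\in S$ gives idempotency. The necessity half, however, contains a genuine gap. You assert that a quark must be a minimal non-zero ideal with respect to inclusion, ``for if $S\subsetneq J\subsetneq Q$ then $J\prec Q$''. That implication is false in general: $J\subsetneq Q$ does not give $J\preceq Q$, because $\preceq$ demands an ideal $K$ with $J+K=Q$, and inclusion is strictly weaker than divisibility in $\mathcal{I}_0(S)$. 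The paper itself stresses this: for $S=\langle 6,8,17,19,21\rangle$ the ideal $Q=\{0,2,4,5\}+S$ is a quark, yet the set of ideals of $\mathcal{I}_0(S)$ contained in it has $25$ elements, so quarks need not be inclusion-minimal. (The same conflation appears, harmlessly, in your sufficiency half, where you say ``equivalently''; there you only use the true direction, namely that inclusion-minimality implies being a quark.)

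The gap is repairable, but only by invoking the idempotency hypothesis at exactly this step, which your write-up never does: if $J\subseteq Q$ and $Q+Q=Q$, then $J+Q\subseteq Q+Q=Q$ and $Q=0+Q\subseteq J+Q$, so $J+Q=Q$, that is, $J\preceq Q$. With this line inserted, your route (idempotent quark $\Rightarrow$ inclusion-minimal $\Rightarrow$ $Q=\{0,f\}+S$ with $f\in\operatorname{PF}(S)$ by Proposition~\ref{prop:mins-maxs}~(4), and then $2f\in Q$ forces $2f\in S$, since $2f\in f+S$ would put $f$ in $S$) does go through and is a legitimate variant of the published proof. The paper avoids the detour through inclusion-minimality: it picks a pseudo-Frobenius number $f\in Q$ (Remark~\ref{rem:type-mins}) and uses idempotency to get $(\{0,f\}+S)+Q=Q$ directly, whence $\{0,f\}+S\preceq Q$ and the quark property forces equality. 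Either way, the essential idea missing from your argument is the conversion of inclusion into $\preceq$-divisibility via $Q+Q=Q$; without it, the necessity direction rests on a false statement.
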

\begin{proof}
Let $Q$ be an idempotent quark. By Remark~\ref{rem:type-mins}, we know that there exists $f\in \operatorname{PF}(S)$ such that $f\in Q$. Notice that $(\{0,f\}+S)+Q=Q$, since for every $s\in S$, $f+s\in Q$ and $Q$ is idempotent. Thus $\{0,f\}+S=Q$, since otherwise $\{0,f\}+S\prec Q$, contradicting that $Q$ is a quark. The fact that $Q+Q=Q$ forces $f+f=2f\in \{0,f\}+S$, which yields $2f\in S$, whence $f\in \operatorname{SG}(S)$.

For the converse, let $f\in \operatorname{SG}(S)$ and set $Q=\{0,f\}+S$. As $f\in \operatorname{PF}(S)$, by Proposition~\ref{prop:mins-maxs}, $Q$ is a minimal non-zero element of $\mathcal{I}_0(S)$ with respect to inclusion. Thus, in light of Lemma~\ref{lem:mins-are-quarks}, $Q$ is a quark. The fact that $Q+Q=Q$ follows easily from the fact that $f\in \operatorname{PF}(S)$ and $2f\in S$.
\end{proof}

Observe that another reading of the last result is that unitary extensions of a numerical semigroup $S$ are precisely the idempotent quarks of $\mathcal{I}_0(S)$. 

We say that a numerical semigroup $T$ is an over-semigroup of $S$ if $S\subseteq T$ \cite{over}. 
\begin{proposition}
    Let $S$ and $T$ be numerical semigroups. Then $T$ is an over-semigroup of $S$ if and only if $T$ is an idempotent in $\mathcal{I}_0(S)$.
\end{proposition}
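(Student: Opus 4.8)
The plan is to prove the biconditional directly by characterising idempotents in $\mathcal{I}_0(S)$. Recall that an element $T\in \mathcal{I}_0(S)$ is idempotent when $T+T=T$. Since elements of $\mathcal{I}_0(S)$ are precisely the subsets $T\subseteq \mathbb{N}$ with $0=\min(T)$ and $T+S\subseteq T$, the claim amounts to showing that such a $T$ satisfies $T+T=T$ if and only if $T$ is itself a numerical semigroup containing $S$. First I would observe that if $T$ is an over-semigroup of $S$, then $T$ is closed under addition, so $T+T\subseteq T$, while $0\in T$ gives the reverse inclusion $T=0+T\subseteq T+T$; hence $T+T=T$ and $T$ is idempotent. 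Note also that $T$ being a numerical semigroup already forces $0=\min(T)$ and $S+T\subseteq T+T=T$, so $T\in \mathcal{I}_0(S)$.

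For the converse, suppose $T\in \mathcal{I}_0(S)$ is idempotent. I must show $T$ is a numerical semigroup containing $S$. Containment is immediate: $S=0+S\subseteq T+S\subseteq T$ using $T\in \mathcal{I}_0(S)$. To see that $T$ is a submonoid of $\mathbb{N}$, note $0\in T$ by $\min(T)=0$, and for closure under addition take $a,b\in T$; then $a+b\in T+T=T$, so $T$ is closed. Finiteness of $\mathbb{N}\setminus T$ follows because $S\subseteq T$ and $\mathbb{N}\setminus S$ is finite. Thus $T$ is a numerical semigroup, and since $S\subseteq T$, it is an over-semigroup of $S$.

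The argument is essentially a routine unwinding of definitions, so there is no genuine obstacle; the only point requiring a moment of care is verifying that the idempotent condition $T+T=T$ is exactly what promotes the ideal-theoretic closure $T+S\subseteq T$ to full additive closure $T+T\subseteq T$. Everything else follows from $0\in T$ and $S\subseteq T$.
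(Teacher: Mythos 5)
Your proof is correct and follows essentially the same route as the paper's: both directions are a direct unwinding of the definitions, using $0\in T$ to get $T\subseteq T+T$ in the forward direction, and using $S=0+S\subseteq T+S\subseteq T$ plus $T+T=T$ to see that an idempotent is a numerical semigroup containing $S$. You simply spell out the routine details (minimum zero, finiteness of the complement) that the paper leaves implicit.
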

\begin{proof}
If $T$ is an over-semigroup of $S$, then $T+S=T$ and $T+T=T$, whence $T$ is an idempotent in $\mathcal{I}_0(S)$. Now let $I$ be an idempotent element in $\mathcal{I}_0(S)$. Then $I+I=I$, which means that $I$ is a semigroup, and $S\subseteq I$, which means that (1) the finite complement of $I$ in $\mathbb{N}$ is finite, and (2) it is an over-semigroup of $S$.
\end{proof}

Observe that idempotent ideals are precisely those with reduction number one.

\begin{example}
Of the 17 elements in $\mathcal{I}_0(S)$, with $S=\langle 4,6,9\rangle$, 12 of them are over-semi\-groups (see Figure~\ref{fig:id-4-6-9}).
\end{example}

\begin{remark}
    Notice that if $T$ is an over-semigroup of $S$ and $I\in \mathcal{I}_0(T)$, then $I+S\subseteq I+T\subseteq I$, and thus $I\in \mathcal{I}_0(S)$. Thus, $\mathcal{I}_0(T)\subseteq \mathcal{I}_0(S)$.

    Not every ideal of $\mathcal{I}_0(S)$ is an ideal of an over-semigroup of $S$. This is mainly due to the fact that, there might be more pseudo-Frobenius numbers (which correspond to minimal ideals) than special gaps. For instance, for the semigroup $S=\langle 3,10,17\rangle$, we have $\operatorname{PF}(S)=\{7,14\}$ and $\operatorname{SG}(S)=\{14\}$. In this case the ideal $\{0,7\}+S$ is a minimal non-zero ideal of $S$ (Proposition~\ref{prop:mins-maxs}) that is not an ideal of any over-semigroup of $S$ (if it were so, it would be an ideal of the unique unitary extension of $S$, $S\cup\{14\}$, and $S\cup\{14\}$ is not included in $\{0,7\}+S$).
\end{remark}

\begin{lemma}\label{lem:quark-frob}    
    Let $S$ be a numerical semigroup with Frobenius number $f$. Let $Q$ be a quark of $\mathcal{I}_0(S)$. If $f\in Q$, then $\{0,f\}+S=Q$.
\end{lemma}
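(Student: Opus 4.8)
The plan is to exhibit $\{0,f\}+S$ explicitly as an element lying $\preceq$-below $Q$ and then invoke the quark property. First I would record the easy inclusion: since $0\in Q$ (because $\min(Q)=0$), $f\in Q$ by hypothesis, and $Q+S\subseteq Q$, we have $\{0,f\}+S\subseteq Q$. The heart of the argument is the identity $(\{0,f\}+S)+Q=Q$, which I would establish by showing $f+Q\subseteq Q$. Indeed, for $q\in Q$ with $q>0$ we have $f+q\ge f+1=\operatorname{c}(S)$, so $f+q\in S\subseteq Q$; and for $q=0$ we have $f+q=f\in Q$ by hypothesis. Hence, using $S+Q=Q$ and $f+Q\subseteq Q$,
\[
(\{0,f\}+S)+Q=\{0,f\}+Q=Q\cup(f+Q)=Q.
\]

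This identity says exactly that $\{0,f\}+S\preceq Q$ (take the witnessing summand to be $Q$ itself). Since $f$ is a gap of $S$, the ideal $\{0,f\}+S$ is different from $S$ and is therefore a non-unit of $\mathcal{I}_0(S)$, recalling from Proposition~\ref{prop:reduced} that $S$ is the only unit. If we had $\{0,f\}+S\neq Q$, then, by the characterisation $I\prec J \iff I\preceq J \text{ and } I\neq J$ recorded just before Remark~\ref{rem:heightHassesum}, we would obtain $\{0,f\}+S\prec Q$ with $\{0,f\}+S$ a non-unit, contradicting that $Q$ is a quark. Therefore $\{0,f\}+S=Q$, as claimed.

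I do not expect any serious obstacle here. The only place where the hypothesis that $f$ is the \emph{Frobenius} number (rather than an arbitrary gap) is used is the containment $f+Q\subseteq Q$, which follows immediately from $f+1=\operatorname{c}(S)$ and the fact that every integer at least $\operatorname{c}(S)$ lies in $S$. Everything else is the formal translation between the additive preorder $\preceq$ and set inclusion, together with the definition of a quark; in particular the idempotence of $\{0,f\}+S$ noted in Remark~\ref{rem:non-unit-cancellative} is consistent with the computation above and could be used to double-check it.
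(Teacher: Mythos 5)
Your proof is correct and follows essentially the same route as the paper's: both establish the key identity $(\{0,f\}+S)+Q=Q$ by observing that $f$ plus any positive element of $Q$ exceeds the Frobenius number and hence lies in $S\subseteq Q$, and then conclude $\{0,f\}+S=Q$ from the quark property via the characterisation of $\prec$. The only cosmetic difference is that you isolate the containment $f+Q\subseteq Q$ while the paper argues on a generic element $f+s+q$, which is the same computation.
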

\begin{proof}
    We prove that $(\{0,f\}+S)+Q=Q$. The inclusion $E\subseteq (\{0,f\}+S)+Q$ clearly holds. Now take $x\in (\{0,f\}+S)+Q=\{0,f\}+(S+Q)=\{0,f\}+Q$. Then either $x\in Q$ or $x=f+s+q$ for some $s\in S$ and $q\in Q$. If $s+Q>0$, then $f+s+q\in S\subseteq Q$. If $s+q=0$, then $x=f$ which is in $E$ by hypothesis. As $Q$ is a quark and $\{0,f\}+S\preceq Q$, we deduce that $\{0,f\}+S=Q$.
\end{proof}

Next, we see that how is the set of quarks of the ideal class monoid of a symmetric numerical semigroup.

\begin{proposition}\label{prop:char-symm-quarks}
    Let $S$ be a numerical semigroup, $S\neq \mathbb{N}$. Then $S$ is symmetric if and only if $\mathcal{I}_0(S)$ has only one quark. 
\end{proposition}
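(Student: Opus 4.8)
The plan is to prove both implications by controlling which ideals can be quarks, using the structural facts already established: that every non-trivial ideal in $\mathcal{I}_0(S)$ contains a pseudo-Frobenius number (Remark~\ref{rem:type-mins}), and the characterisation of minimal ideals as $\{0,f\}+S$ with $f\in\operatorname{PF}(S)$ (Proposition~\ref{prop:mins-maxs}~(4)). The key observation linking symmetry to quarks is that $S$ is symmetric precisely when $\operatorname{PF}(S)=\{\operatorname{F}(S)\}$, i.e.\ the type is one. So morally the statement says: $\mathcal{I}_0(S)$ has a unique quark if and only if $\operatorname{t}(S)=1$.

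For the forward direction, assume $S$ is symmetric, so $\operatorname{PF}(S)=\{f\}$ with $f=\operatorname{F}(S)$. Let $Q$ be any quark. By Remark~\ref{rem:type-mins} (via Proposition~\ref{prop:mins-maxs}~(4)), $Q$ contains a pseudo-Frobenius number, and the only candidate is $f$, so $f\in Q$. Then Lemma~\ref{lem:quark-frob} applies directly and forces $Q=\{0,f\}+S$. Hence there is exactly one quark, namely $\{0,\operatorname{F}(S)\}+S$ (which is indeed a quark, being the unique minimal non-zero ideal, by Lemma~\ref{lem:mins-are-quarks}).

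For the converse, I would argue by contrapositive: suppose $S$ is not symmetric, so $\operatorname{t}(S)\ge 2$ and $\operatorname{PF}(S)$ contains at least two distinct pseudo-Frobenius numbers $f_1,f_2$. By Proposition~\ref{prop:mins-maxs}~(4), the ideals $\{0,f_1\}+S$ and $\{0,f_2\}+S$ are both minimal non-zero elements of $\mathcal{I}_0(S)$ with respect to inclusion, and hence both are quarks by Lemma~\ref{lem:mins-are-quarks}. Since $f_1\ne f_2$ these are genuinely distinct ideals (their Apéry coordinates differ), so $\mathcal{I}_0(S)$ has at least two quarks, completing the contrapositive.

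The only real subtlety, and the step I would watch most carefully, is ensuring that the two minimal ideals arising from distinct pseudo-Frobenius numbers are distinct as ideals and that both are truly quarks rather than merely minimal-with-respect-to-inclusion; but Lemma~\ref{lem:mins-are-quarks} handles exactly this, converting inclusion-minimality into the quark property for the $\preceq$ order. One should also confirm at the outset that a quark always contains a pseudo-Frobenius number, so that in the symmetric case no exotic quark (not of the form $\{0,f\}+S$) can slip through; this is guaranteed by the observation in Remark~\ref{rem:type-mins} that every ideal other than $S$ contains some element of $\operatorname{PF}(S)$. With these pieces in place the argument is short, the genus/type dictionary being the conceptual core.
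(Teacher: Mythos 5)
Your proposal is correct and follows essentially the same route as the paper's proof: the forward direction is identical (Remark~\ref{rem:type-mins} forces $f\in Q$, then Lemma~\ref{lem:quark-frob} pins down $Q$), and your contrapositive argument for the converse uses exactly the same ingredients as the paper's direct argument (Proposition~\ref{prop:mins-maxs}~(4), Lemma~\ref{lem:mins-are-quarks}, and the type-one characterisation of symmetric semigroups), just read in the opposite logical direction.
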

\begin{proof}
Let $f$ be the Frobenius number of $S$. By Proposition~\ref{prop:mins-maxs} (4) and Lemma~\ref{lem:mins-are-quarks}, we know that $\{0,f\}+S$ is a quark.

Suppose that $S$ is symmetric. Let $Q$ be a quark. By using Remark~\ref{rem:type-mins}, we have that $f\in Q$, since $\operatorname{PF}(S)=\{f\}$. But then Lemma~\ref{lem:quark-frob} ensures that $Q=\{0,f\}+S$.

Now suppose that $\mathcal{I}_0(S)$ has only one quark, which by the first paragraph of this proof must be $\{0,f\}+S$. By Lemma~\ref{lem:mins-are-quarks}, this means that $\mathcal{I}_0(S)$ has at most one minimal element, and Remark~\ref{rem:type-mins} ensures that the type of $S$ is at most one, and thus $S$ is symmetric \cite[Corollary~8]{ns-ap}.
\end{proof}

Notice that if $S$ is symmetric, being $\{0,\operatorname{F}(S)\}+S$ the only quark of its ideal class monoid, $\{0,\operatorname{F}(S)\}+S$ is a prime of $\mathcal{I}_0(S)$. 

\begin{remark}
    Let $S$ be a numerical semigroup with Frobenius number $f$. Then $I\in \mathcal{I}_0(S\cup\{F\})$ if and only if $f\in I$ and $I\in \mathcal{I}_0(S)$. The necessity is easy, since $I+(S\cup\{f\})\subseteq I$ implies that $I+S\subseteq I$ and $f\in I$. For the sufficiency, $I+(S\cup\{f\})\subseteq (I+S)\cup (I+f)\subseteq I\cup (S\cup\{f\})\subseteq I$. 

    In particular, if $S$ is symmetric, by Remark~\ref{rem:type-mins}, we have 
    \[
    \mathcal{I}_0(S)=\{S\}\cup\mathcal{I}_0(S\cup\{f\}).
    \]
    One can observe this in the example depicted in Figure~\ref{fig:id-4-6-9}.

    If $S$ is symmetric with multiplicity $m$, then $f>m$ unless $S=\langle 2,3\rangle$. If $m>2$ and $f<m$, then $S=\langle m,m+1,\dots,m+m-2\rangle$, $f=m-1$. Then $(m-1)-(m-2)=f-(m-2)=1$ must be in $S$ by the symmetry of $S$, forcing $m=1$, a contradiction. The Frobenius number of $\langle 2,n\rangle$, with $n$ odd greater than three, is $n-2$, and so $f>m$ holds also in this case. So, if $S\neq\langle 2,3\rangle$, the only unitary extension of $S$, $S\cup\{f\}$, has also multiplicity $m$. If $(k_1,\dots,k_{m-1})$ and $(k_1',\dots,k_{m-1}')$ are the Kunz coordinates of $S$ and $S\cup\{f\}$, respectively, then $k_i=k_i'$ for $i\neq f \mod m$, and $k_f=k_f'+1$. Thus the upper bound given in Corollary~\ref{cor:bound-ideals-kunz} can be sharpened for symmetric numerical semigroups (other than $\langle 2,3\rangle$),
    \[
    |\mathcal{I}_0(S)|=1+k_f \prod_{\substack{i=1,\\i\neq f\mod m}}^{m-1}(k_i+1).
    \]
\end{remark}

We now proceed with the pseudo-symmetric case.

\begin{proposition}\label{prop:char-pseudo-symm-quarks}
    Let $S$ be a numerical semigroup. Then $S$ is pseudo-symmetric if and only if $\{0,\operatorname{F}(S)\}+S$ and $\{0,\operatorname{F}(S)/2\}+S$ are the only quarks of $\mathcal{I}_0(S)$. 
\end{proposition}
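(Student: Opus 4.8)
The plan is to characterize pseudo-symmetric numerical semigroups through the quark structure of $\mathcal{I}_0(S)$, exploiting the analogy with the symmetric case handled in Proposition~\ref{prop:char-symm-quarks}. Recall that $S$ is pseudo-symmetric if and only if $\operatorname{F}(S)$ is even and for every gap $x\neq \operatorname{F}(S)/2$ we have $\operatorname{F}(S)-x\in S$. A standard equivalent characterization (see \cite[Chapter~3]{ns}) is that $S$ is pseudo-symmetric if and only if $\operatorname{PF}(S)=\{\operatorname{F}(S)/2,\operatorname{F}(S)\}$, that is, $S$ has exactly two pseudo-Frobenius numbers, one being the Frobenius number and the other its half. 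This is the key fact I intend to leverage, since Remark~\ref{rem:type-mins} ties the number of minimal non-zero ideals (and hence, via Lemma~\ref{lem:mins-are-quarks}, a lower bound on quarks) directly to $\operatorname{t}(S)=|\operatorname{PF}(S)|$.

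For the forward direction, suppose $S$ is pseudo-symmetric, so $\operatorname{PF}(S)=\{\operatorname{F}(S)/2,\operatorname{F}(S)\}$. Write $f=\operatorname{F}(S)$. By Proposition~\ref{prop:mins-maxs}~(4) the minimal non-zero ideals are exactly $\{0,f\}+S$ and $\{0,f/2\}+S$, and by Lemma~\ref{lem:mins-are-quarks} both are quarks; so I must show these are the \emph{only} quarks. Let $Q$ be any quark. By Remark~\ref{rem:type-mins}, $Q$ contains a pseudo-Frobenius number, so $Q$ contains $f$ or $f/2$. If $f\in Q$, then Lemma~\ref{lem:quark-frob} immediately forces $Q=\{0,f\}+S$. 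The remaining case is $f/2\in Q$ but $f\notin Q$; here I want to run an argument parallel to Lemma~\ref{lem:quark-frob}, showing $(\{0,f/2\}+S)+Q=Q$ and hence $Q=\{0,f/2\}+S$ by minimality. The computation reduces to checking that $f/2+s+q\in Q$ for every $s\in S$ and $q\in Q$: when $s+q>0$ I expect $f/2+s+q$ to lie in $S\subseteq Q$, using pseudo-symmetry to control the single problematic congruence class, and when $s=q=0$ it equals $f/2\in Q$ by hypothesis. This is the step where pseudo-symmetry (rather than mere symmetry) is genuinely needed, and it is where I expect the main technical care.

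For the converse, suppose $\{0,f\}+S$ and $\{0,f/2\}+S$ are the only quarks. First I note that $f/2$ being a gap and $\{0,f/2\}+S$ being an ideal forces $f$ even (otherwise $f/2$ is not an integer). By Lemma~\ref{lem:mins-are-quarks} every minimal non-zero ideal is a quark, so there are at most two minimal non-zero ideals; by Remark~\ref{rem:type-mins} this gives $\operatorname{t}(S)\le 2$. If $\operatorname{t}(S)=1$ then $S$ is symmetric, and by Proposition~\ref{prop:char-symm-quarks} it would have a single quark, contradicting that we have two distinct quarks (here I should check $\{0,f\}+S\neq\{0,f/2\}+S$, which holds since $f\neq f/2$ for $f>0$). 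Hence $\operatorname{t}(S)=2$, and the two minimal non-zero ideals are $\{0,f\}+S$ and $\{0,f/2\}+S$, so by Proposition~\ref{prop:mins-maxs}~(4) we get $\operatorname{PF}(S)=\{f/2,f\}$. Combined with $f$ even, the standard characterization then yields that $S$ is pseudo-symmetric.

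The main obstacle I anticipate is the forward-direction case analysis when $f/2\in Q$, specifically verifying that $\{0,f/2\}+S$ absorbs $Q$ under addition. Unlike the symmetric case, where the Frobenius number alone controls everything via Lemma~\ref{lem:quark-frob}, here I must rule out that some larger quark could contain $f/2$ without containing $f$; the pseudo-symmetry condition is exactly what guarantees $f/2+s\in S$ for all nonzero $s\in S$ (equivalently $f/2\in\operatorname{PF}(S)$), which is what makes the absorption work. A clean way to organize this would be to prove an analogue of Lemma~\ref{lem:quark-frob} for any pseudo-Frobenius number $p$ with $2p\in S$: if $p\in Q$ for a quark $Q$, then $Q=\{0,p\}+S$. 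Indeed $f/2\in\operatorname{SG}(S)$ precisely when $2\cdot(f/2)=f\in S$, which fails, so I cannot directly invoke Proposition~\ref{prop:idempotent-quarks}; instead I rely on $f/2\in\operatorname{PF}(S)$ to secure $(\{0,f/2\}+S)+Q=Q$ and then close with the quark-minimality argument.
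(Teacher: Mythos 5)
Your converse direction is sound and essentially matches the paper's argument (the paper deduces $\operatorname{PF}(S)=\{f/2,f\}$ from the incomparability of the two quarks rather than through a case split on the type, but both routes work). The forward direction, however, has a genuine gap at exactly the step you flagged as the main technical one. You propose to handle a quark $Q$ with $f/2\in Q$ and $f\notin Q$ by proving the absorption identity $(\{0,f/2\}+S)+Q=Q$ and then invoking the quark property. This identity can never hold in the case you need it: since $f/2\in Q$, the left-hand side contains $f/2+f/2=f$, which by assumption is not in $Q$. It fails even for $Q=\{0,f/2\}+S$ itself, consistently with your own observation that $f/2\notin\operatorname{SG}(S)$, i.e.\ that $\{0,f/2\}+S$ is not idempotent (Proposition~\ref{prop:idempotent-quarks}). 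Relatedly, your claim that $f/2+s+q\in S$ whenever $s+q>0$ breaks down when $q$ is a gap of $S$ lying in $Q$: the hypothesis $f/2\in\operatorname{PF}(S)$ only controls $f/2+s$ for $s\in S\setminus\{0\}$, not $f/2+q$ for gaps $q$, and in particular not $q=f/2$. The analogue of Lemma~\ref{lem:quark-frob} you propose for a pseudo-Frobenius number $p$ with $2p\in S$ is true but inapplicable here, since $2\cdot(f/2)=f\notin S$.

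The fix is to abandon $\preceq$-absorption in this case and argue by inclusion, which is what the paper does: from $f/2\in Q$ one gets $\{0,f/2\}+S\subseteq Q$; if this inclusion were proper, any $g\in Q\setminus(\{0,f/2\}+S)$ would be a gap of $S$ with $g\neq f/2$, so pseudo-symmetry gives $f-g\in S$ and hence $f=g+(f-g)\in g+S\subseteq Q$, contradicting $f\notin Q$ (which you correctly secured via Lemma~\ref{lem:quark-frob}). Thus $Q=\{0,f/2\}+S$ follows directly, with no need to first establish $\{0,f/2\}+S\preceq Q$. This is where pseudo-symmetry genuinely enters, as you anticipated, but it acts on the complement $Q\setminus(\{0,f/2\}+S)$ rather than through an absorption identity.
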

\begin{proof}
Suppose that $S$ is pseudo-symmetric. Let $f$ be the Frobenius number of $S$. By Proposition~\ref{prop:mins-maxs}~(4) and Lemma~\ref{lem:mins-are-quarks}, we know that $\{0,f\}+S$ and $\{0,f/2\}+S$ are quarks.

Suppose that $Q$ is a quark other than $\{0,f\}+S$ and $\{0,f/2\}+S$. By Remark~\ref{rem:type-mins} and the fact that $\operatorname{PF}(S)=\{f/2, f\}$, we have that $f$ or $f/2$ is in $Q$. By Lemma~\ref{lem:quark-frob}, $f$ cannot be in $Q$. Thus $f/2\in Q$ and $\{0,f/2\}+S\subsetneq Q$. Let $g\in Q\setminus(\{0,f/2\}+S)$; which in particular means that $g\neq f/2$. As $S$ is pseudo-symmetric, $f-g\in S$, but then $f\in g+S\subseteq Q$, which is impossible.

Now suppose that the set of quarks of  $\mathcal{I}_0(S)$ is $\{\{0,f/2\}+S,\{0,f\}+S\}$. By Lemma~\ref{lem:mins-are-quarks}, this means that $\mathcal{I}_0(S)$ has at most two minimal elements. Observe that $\{0,f/2\}+S$ and $\{0,f\}+S$ are incomparable with respect to inclusion, and so $\operatorname{Minimals}_\subseteq (\mathcal{I}_0(S)\setminus\{S\})=\{\{0,f/2\}+S,\{0,f\}+S\}$, which by Proposition~\ref{prop:mins-maxs} means that $\operatorname{PF}(S)=\{f/2,f\}$. By \cite[Corollary~9]{ns-ap}, $S$ is pseudo-symmetric.
\end{proof}

With the help of these two propositions we can characterise irreducible numerical semigroups in terms of the number of quarks of its ideal class monoid.

\begin{theorem}
    Let $S$ be a numerical semigroup. Then $S$ is irreducible if and only if $\mathcal{I}_0(S)$ has at most two quarks.
\end{theorem}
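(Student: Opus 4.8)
The plan is to combine the two characterisations already established for the symmetric and pseudo-symmetric cases with the fact, recalled in the introductory section, that a numerical semigroup is irreducible if and only if it is either symmetric or pseudo-symmetric. This makes the theorem essentially a bookkeeping exercise linking ``irreducible'' to ``at most two quarks'' through the dichotomy ``one quark versus two quarks''.

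First I would dispose of the trivial case $S=\mathbb{N}$ separately, since Proposition~\ref{prop:char-symm-quarks} is stated for $S\neq\mathbb{N}$; here $\mathcal{I}_0(\mathbb{N})=\{\mathbb{N}\}$ has no non-zero ideals and hence no quarks, while $\mathbb{N}$ is (vacuously) irreducible, so the equivalence holds. For $S\neq\mathbb{N}$, I would argue both directions. For the forward implication, assume $S$ is irreducible; then $S$ is symmetric or pseudo-symmetric. If $S$ is symmetric, Proposition~\ref{prop:char-symm-quarks} gives exactly one quark; if $S$ is pseudo-symmetric, Proposition~\ref{prop:char-pseudo-symm-quarks} gives exactly two quarks. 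In either case $\mathcal{I}_0(S)$ has at most two quarks.

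For the converse, suppose $\mathcal{I}_0(S)$ has at most two quarks. The semigroup $\{0,f\}+S$ is always a quark (being a minimal non-zero ideal, by Proposition~\ref{prop:mins-maxs}~(4) and Lemma~\ref{lem:mins-are-quarks}), so the number of quarks is either one or two. If there is exactly one quark, then $S$ is symmetric by Proposition~\ref{prop:char-symm-quarks}. If there are exactly two quarks, I need them to be forced into the shape $\{0,f\}+S$ and $\{0,f/2\}+S$ so that Proposition~\ref{prop:char-pseudo-symm-quarks} applies and yields pseudo-symmetry. In either subcase $S$ is irreducible.

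The main obstacle is the two-quark subcase of the converse: knowing merely that there are exactly two quarks does not immediately produce the specific second quark $\{0,\operatorname{F}(S)/2\}+S$ demanded by Proposition~\ref{prop:char-pseudo-symm-quarks}. One clean way around this is to invoke Lemma~\ref{lem:mins-are-quarks} together with Remark~\ref{rem:type-mins}: having at most two quarks bounds the number of minimal non-zero ideals by two, hence $\operatorname{t}(S)\le 2$, and a numerical semigroup with $S\neq\mathbb{N}$ has type one exactly when symmetric and type two (with the pseudo-Frobenius set of the form $\{f/2,f\}$) exactly when pseudo-symmetric, by the characterisations in \cite{ns-ap} already cited. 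Thus rather than reconstructing the second quark directly, I would deduce irreducibility straight from the type bound, being careful to handle the possibility that the two quarks coincide with or exceed the count of minimal ideals, and to check that type two forces the pseudo-symmetric configuration rather than an arbitrary type-two semigroup.
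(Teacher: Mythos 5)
Your reduction of the converse to the bound $\operatorname{t}(S)\le 2$ is sound as far as it goes (minimal non-zero ideals are quarks by Lemma~\ref{lem:mins-are-quarks}, and by Proposition~\ref{prop:mins-maxs}~(4) there are exactly $\operatorname{t}(S)$ of them), and the type-one case then closes exactly as in the paper. But the type-two case has a genuine gap, which you flag yourself without closing: it is simply false that a type-two numerical semigroup must have $\operatorname{PF}(S)=\{\operatorname{F}(S)/2,\operatorname{F}(S)\}$. For instance, $S=\langle 5,7,9\rangle$ has $\operatorname{PF}(S)=\{11,13\}$, so it has type two, yet it is neither symmetric nor pseudo-symmetric (its Frobenius number $13$ is odd and its genus is $8\neq 7$), hence not irreducible. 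So ``deducing irreducibility straight from the type bound'' cannot work: the hypothesis of at most two quarks must be invoked a \emph{second} time, beyond the count of minimal ideals, precisely to exclude semigroups like this one, which by the theorem must have a third quark.

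That second use of the hypothesis is the real content of the paper's proof, and it is what your proposal is missing. Writing $\operatorname{PF}(S)=\{f_1,f_2\}$ with $f_1=\operatorname{F}(S)$, the paper shows that $Q=\{0,f_1-f_2\}+S$ is \emph{always} a quark in the type-two case: $Q$ is irreducible by Proposition~\ref{prop:irreds-two-el}; if $I\prec Q$ with $I\neq S$, then $I+J=Q$ for some $J\neq S$, and irreducibility forces $J=Q$, so $I+Q=Q$; by Remark~\ref{rem:type-mins} the ideal $I$ contains $f_1$ or $f_2$, and since $I\subseteq Q$ while $f_1\notin Q$ (else $f_2\in S$), we get $f_2\in I$; but then $f_1=f_2+(f_1-f_2)\in I+Q=Q$, a contradiction. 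Now the two-quark hypothesis forces $Q$ to coincide with $\{0,f_1\}+S$ or $\{0,f_2\}+S$; the first is impossible because $f_1\notin Q$, so $\{0,f_1-f_2\}+S=\{0,f_2\}+S$, whence $f_1-f_2=f_2$, i.e. $f_2=\operatorname{F}(S)/2$, and pseudo-symmetry follows from \cite[Corollary~9]{ns-ap}. In the example above this third quark is $\{0,2\}+S$, which is not a minimal non-zero ideal at all --- exactly the phenomenon a type-count argument cannot see.
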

\begin{proof}
    If $S=\mathbb{N}$, then $\mathcal{I}_0(S)$ is trivial and has no quarks, and it is indeed the only numerical semigroup for which $\mathcal{I}_0(S)$ has no quarks, since for any other numerical semigroup every non-zero minimal ideal in $\mathcal{I}_0(S)$ is a quark (Lemma~\ref{lem:mins-are-quarks}). Thus, let us suppose that $S\neq \mathbb{N}$.

    \emph{Necessity}. If $S$ is irreducible, then it is either symmetric or pseudo-symmetric, and thus by Propositions~\ref{prop:char-symm-quarks} and \ref{prop:char-pseudo-symm-quarks}, $\mathcal{I}_0(S)$ has at most two quarks.

    \emph{Sufficiency}. Now assume that $\mathcal{I}_0(S)$ has at most two quarks. According to Remark~\ref{rem:type-mins} and Le\-m\-ma~\ref{lem:mins-are-quarks}, we have that the type of $S$ is at most two. If the type is one, then $S$ is symmetric \cite[Corollary~8]{ns-ap}. So it remains to see what happens when the type is two. 
    
    Let $\operatorname{PF}(S)=\{f_1,f_2\}$, with $f_1=\operatorname{F}(S)$. We already know that $\{0,f_1\}+S$ and that $\{0,f_2\}+S$ are non-zero minimal ideals and thus quarks (Proposition~\ref{prop:mins-maxs} and Lemma~\ref{lem:mins-are-quarks}). Let $Q=\{0,f_1-f_2\}+S$. We prove that $Q$ is a quark. As $f_1-f_2\not\in S$ (by definition of pseudo-Frobenius number), Proposition~\ref{prop:irreds-two-el} asserts that $Q$ is irreducible. Suppose that there is some ideal $I\in\mathcal{I}_0(S)\setminus\{S\}$ such that $I\prec Q$. Then there exists $J\in \mathcal{I}_0(S)\setminus\{0\}$ such that $I+J=Q$. But we know that $Q$ is irreducible, and thus $J=Q$. Observe that $f_1\not\in Q$, because if this is the case, then $f_1=f_1-f_2+s$ for some $s\in S$, forcing $f_2=s\in S$, a contradiction. Also, notice that by Remark~\ref{rem:type-mins}, either $f_1\in I$ or $f_2\in I$. As $I\subset Q$, the first case is not possible, and so $f_2\in I$. But then $f_1=f_2+(f_1-f_2)\in I+Q=Q$, a contradiction. Thus $Q$ is a quark, and so either $Q=\{0,f_1\}+S$ or $Q=\{0,f_2\}+S$. Once more, the first case cannot hold, since in particular this would yield $f_1\in Q$, which is impossible. Therefore, $\{0,f_1-f_2\}+S=\{0,f_2\}+S$, which leads to $f_1-f_2=f_2$, or equivalently, $f_2=f_1/2$. By \cite[Corollary~9]{ns-ap}, we deduce that is pseudo-symmetric. 
\end{proof}

Let $S$ be a numerical semigroup, and let $I,J\in \mathcal{I}_0(S)$. Recall that $J$ is said to cover $I$ if $I\prec J$ and there is no $K\in \mathcal{I}_0(S)$ such that $I\prec K\prec J$. We denote by $I^\smile$ the set of ideals in $\mathcal{I}_0(S)$ covering $I$. In this way, the set of quarks is precisely $S^\smile$. Analogously, define $I^\frown$ to be the set of ideals covered by $I$. A natural question (dual to determining $S^\smile$) would be to characterize those ideals belonging to $\mathbb{N}^\frown$. We describe this set in the next result.

\begin{proposition}\label{prop:coveredN}
Let $S$ be a numerical semigroup with multiplicity $m$. Then
\[
\mathbb{N}^\frown=\operatorname{Maximals}_\subseteq(\mathcal{I}_0(S)\setminus\{\mathbb{N}\}).
\]
In particular, $|\mathbb{N}^\frown|=m-1$.
\end{proposition}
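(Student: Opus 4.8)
The plan is to prove the two set inclusions separately, using throughout the equivalence established just before Remark~\ref{rem:heightHassesum}: in $\mathcal{I}_0(S)$ one has $I\preceq J\Rightarrow I\subseteq J$, and $I\prec J$ if and only if $I\preceq J$ and $I\neq J$. Since $\mathbb{N}$ is absorbing ($I+\mathbb{N}=\mathbb{N}$ for every $I$), every $I\neq\mathbb{N}$ satisfies $I\preceq\mathbb{N}$ and hence $I\prec\mathbb{N}$; thus $J\in\mathbb{N}^\frown$ means precisely that $J\neq\mathbb{N}$ and there is no proper ideal $K\neq\mathbb{N}$ with $J\prec K$. I would begin by invoking Proposition~\ref{prop:mins-maxs}~(3), which both identifies the $m-1$ elements of $\operatorname{Maximals}_\subseteq(\mathcal{I}_0(S)\setminus\{\mathbb{N}\})$ and will supply the cardinality statement at the end.

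For the inclusion $\operatorname{Maximals}_\subseteq(\mathcal{I}_0(S)\setminus\{\mathbb{N}\})\subseteq\mathbb{N}^\frown$, let $M$ be a $\subseteq$-maximal proper ideal. Then $M\neq\mathbb{N}$, so $M\prec\mathbb{N}$. If some $K$ satisfied $M\prec K\prec\mathbb{N}$, then $M\prec K$ would give $M\subsetneq K$ with $K$ a proper ideal, contradicting the $\subseteq$-maximality of $M$. Hence $M\in\mathbb{N}^\frown$. This direction is immediate because the algebraic covering relation is finer than inclusion.

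The reverse inclusion $\mathbb{N}^\frown\subseteq\operatorname{Maximals}_\subseteq(\mathcal{I}_0(S)\setminus\{\mathbb{N}\})$ is the substantial part, and the main obstacle is exactly that $J\subsetneq K$ does \emph{not} force $J\prec K$; so from a non-maximal $J$ I cannot merely point to a larger proper ideal, I must exhibit one that lies genuinely above $J$ in the order $\preceq$. I would argue by contradiction: let $J\in\mathbb{N}^\frown$ and suppose $J$ is not $\subseteq$-maximal among proper ideals, so $\mathbb{N}\setminus J$ has at least two elements. Put $g=\max(\mathbb{N}\setminus J)=\operatorname{F}(J)$; note $g$ is a gap of $S$ since $S\subseteq J$ (Proposition~\ref{prop:mins-maxs}~(2)), so $\{0,g\}+S\in\mathcal{I}_0(S)$. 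Form $K=J+(\{0,g\}+S)$; a short computation using $J+S=J$ gives $K=J\cup(g+J)$, whence $J\preceq K$ automatically, and $g\in K\setminus J$ forces $J\prec K$. Now pick any $x\in\mathbb{N}\setminus J$ with $x\neq g$, so $x<g$: then $x\notin J$ and $x\notin g+J$ (every element of $g+J$ is at least $g$), giving $x\notin K$ and thus $K\neq\mathbb{N}$. Therefore $J\prec K\prec\mathbb{N}$, contradicting $J\in\mathbb{N}^\frown$. Hence $J$ is $\subseteq$-maximal.

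Combining the two inclusions yields $\mathbb{N}^\frown=\operatorname{Maximals}_\subseteq(\mathcal{I}_0(S)\setminus\{\mathbb{N}\})$, and the equality $|\mathbb{N}^\frown|=m-1$ is then read off from Proposition~\ref{prop:mins-maxs}~(3). The one step that genuinely requires the semigroup structure is the construction of the intermediate ideal $K=J+(\{0,\operatorname{F}(J)\}+S)$; everything else is formal manipulation of $\preceq$ versus $\subseteq$.
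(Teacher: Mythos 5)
Your proof is correct and follows essentially the same strategy as the paper's: the easy inclusion comes from maximality plus the fact that $\mathbb{N}$ is absorbing, and the substantive inclusion is obtained by adjoining an ideal $\{0,k\}+S$ generated by an element missing from $J$ to produce a proper ideal strictly between $J$ and $\mathbb{N}$ in the order $\preceq$. The only (cosmetic) difference is the choice of that element: you adjoin $g=\max(\mathbb{N}\setminus J)$ and use any smaller missing element as the witness that the sum stays proper, whereas the paper adjoins $k=\min\bigl((\mathbb{N}\setminus I)\setminus\{j\}\bigr)$ and uses $j=\min(\mathbb{N}\setminus I)$ as the witness, thereby identifying $I$ with the specific maximal ideal $\mathbb{N}\setminus\{j\}$.
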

\begin{proof}
Let $M$ be in $\operatorname{Maximals}_\subseteq(\mathcal{I}_0(S)\setminus\{\mathbb{N}\})$. Then $\mathbb{N}\subseteq M+\mathbb{N}\subseteq \mathbb{N}$, and thus $M+\mathbb{N}=M$, yielding $M\prec \mathbb{N}$. The maximality of $M$ forces $M\in \mathbb{N}^\frown$.

Now let $I\in \mathbb{N}^\frown$. Let $j=\min(\mathbb{N}\setminus I)$. Notice that $j\le m-1$, since $\{0,\dots,m-1\}\subseteq I$ forces $I=\mathbb{N}$, and we are assuming $I\neq \mathbb{N}$. Set $M=\mathbb{N}\setminus\{j\}$. Then $I\subseteq M$, and from Proposition~\ref{prop:mins-maxs}, we deduce that $M\in \operatorname{Maximals}_\subseteq(\mathcal{I}_0(S)\setminus\{\mathbb{N}\})$. Let us prove that $M=I$. Suppose to the contrary that $I\neq M$, and let $k=\min(M\setminus I)$. Then $j<k$ and $I\prec I+(\{0,k\}+S)$. If $j\in I+(\{0,k\}+S)$, then either $j=x+s$ or $j=x+k+s$, with $x\in I$ and $s\in S$. The equality $j=x+s$ forces $j\in I$, which is impossible, while $j=x+k+s$ yields $j\ge k$, a contradiction. Thus $I+(\{0,k\}+S)\subseteq M$. Consequently $I\prec I+(\{0,k\}+S)\prec \mathbb{N}$, contradicting that $I\in \mathbb{N}^\frown$. 

That the cardinality of $\mathbb{N}^\frown$ is $m-1$ follows, once more, from Proposition~\ref{prop:mins-maxs}.
\end{proof}

\begin{remark}
Let us see what the connections are between the concepts of irreducible, atom, quark and prime in $\mathcal{I}_0(S)$.  
\begin{itemize}
    \item If $I\in \mathcal{I}_0(S)$ is a quark and it is not an idempotent ($I\neq I+I$; the reduction number of $I$ is greater than one), then $I$ is an atom. Notice that $I=J+K$ with $J,K\in \mathcal{I}_0(S)\setminus\{0\}$ forces $J\preceq I$ and $K\preceq I$. As $I$ is a quark, $J=I=K$, contradicting that $I\neq I+I$. 
    \item Let $I\in \mathcal{I}_0(S)$ be an atom, and suppose that there exists $J\in \mathcal{I}_0(S)\setminus\{S\}$ such that $J\prec I$. By definition, there exists $K\in \mathcal{I}_0(S)\setminus\{S\}$ such that $J+K=I$, contradicting the fact that $I$ is an atom. Thus, every atom is a quark.
    \item If $I\in \mathcal{I}_0(S)$ is a quark, then $I$ is an irreducible, since $I=J+K$ with $S\neq J\neq I\neq K\neq S$ forces $J\prec I$, a contradiction. Not every irreducible is a quark (see Example~\ref{ex:irred-atom-quark-prime}).
    \item Notice also that every atom is irreducible. 
    \item In $\mathcal{I}_0(S)$, being a prime implies being irreducible. Assume that $I$ is prime and that $I=J+K$ with $S\neq J\prec I$, $S\neq K\prec I$. Then $J\subsetneq I$ and $K\subsetneq I$. As $I$ is prime and $I\preceq J+K$, either $I\preceq J$ or $I\preceq K$, but this implies that either $I\subseteq J$ or $I\subseteq K$; thus either $I\subseteq J\subsetneq I$ or $I\subseteq K\subsetneq I$, which in both cases is impossible. This proves that $I$ is irreducible. 
\end{itemize}
To summarise, being an atom implies being a quark, which in turn implies being irreducible, and every prime element is irreducible. 
Any other implication between these concepts does not hold in light of Examples~\ref{ex:irred-atom-quark-prime} and \ref{ex:2-b}. These examples also show that there are numerical semigroups for which its ideal class monoid has no primes, and numerical semigroups having ideal class monoid with no atoms.
\end{remark}

\section{Open questions}

We know that if two numerical semigroups have isomorphic ideal class monoids, then they have the same genus (Corollary~\ref{cor:isom-clS}). Proposition~\ref{prop:coveredN} forces their multiplicities to be the same, and Proposition~\ref{prop:idempotent-quarks} tells us that they have the same number of unitary extensions. This leads to the following conjecture. 

\begin{question}
Let $S_1, S_2$ be two numerical semigroups such that $\mathcal{C}\ell(S_1)$ is isomorphic to $\mathcal{C}\ell(S_2)$. Does $S_1=S_2$ hold?
\end{question}

Notice that if $\mathcal{C}\ell(S_1)$ and $\mathcal{C}\ell(S_2)$ are isomorphic, then their respective posets with respect to $\preceq$ are isomorphic. Thus, we could also conjecture something stronger. 

\begin{question}
Let $S_1, S_2$ be two numerical semigroups such that $(\mathcal{C}\ell(S_1),\preceq)$ and $(\mathcal{C}\ell(S_2),\preceq)$ are isomorphic as posets. Does $S_1=S_2$ hold? 
\end{question}

We can also reformulate this question taking inclusion instead of $\preceq$. If the above question has a positive answer, it would also be interesting to determine if there exists an algorithm to recover a semigroup from the Hasse diagram of its ideal class monoid.

In Remark~\ref{width}, we proved that a lower bound for the width of the Hasse diagram of $\mathcal{I}_0(S)$ is $(|\mathcal{I}_0(S)|-2)/(\operatorname{g}(S)-1)$. We also showed that the bound is sharp. However, for numerical semigroups with big ideal class monoids, this bound is not very good: for $S=\langle 5,11,17,18\rangle$, the width of the corresponding Hasse diagram is 25 and $(|\mathcal{I}_0(S)|-2)/(\operatorname{g}(S)-1)=165/11=15$. 

\begin{question}
Are there better bounds for the width of the Hasse diagram with respect to inclusion? What about a possible upper bound?
\end{question}

In general, an element in a monoid might admit different expressions as a sum of irreducible elements. These expressions are known as factorizations. There are a many invariants that measure how far these factorization are from being unique (as happens in unique factorization monoids) or have the same length (half-factorial monoids); see \cite{g-hk} for a detailed description of this theory in the cancellative setting or \cite{t} for a more general scope.

\begin{question}
 Given an ideal $I$ of $\mathcal{I}_0(S)$, can we say something about the number of its factorizations in terms of irreducible elements in $\mathcal{I}_0(S)$? Or even about the lengths of these factorizations? Is this set of lengths an interval? 
\end{question}

Section \ref{quarks} was mainly motivated by previous works, started in \cite{f-t}, on the power monoid $\mathcal{P}_{\mathrm{fin},0}(\mathbb{N})$, the set of subsets of $\mathbb{N}$ containing $0$ and with finitely many elements (and $\mathcal{P}_{\mathrm{fin},0}(S)$ with $S$ a numerical semigroup), endowed with addition $A+B=\{a+b : a\in A, b\in B\}$. Moreover, \cite{b-g} shows the abundance of atoms in $\mathcal{P}_{\mathrm{fin},0}(S)$; however, this is far from being the case for $\mathcal{I}_0(S)$, since we have examples with no atoms at all. In $\mathcal{P}_{\mathrm{fin},0}(S)$, irreducible elements, atoms and quarks are the same \cite[ Proposition~4.11(iii) and Theorem~4.12]{t} (notice that these three concepts, irreducible element, atom and quark, differ in our setting), and so it would make more sense to propose the following question instead. 

\begin{question}
    Let $S$ be a numerical semigroup. What is the ratio between the cardinality of irreducible elements in $\mathcal{I}_0(S)$ and the cardinality of $\mathcal{I}_0(S)$=
\end{question}

We know that we have at least as many irreducibles as the genus of $S$, which in turn is the height of the Hasse diagram of $\mathcal{I}_0(S)$ (minus one).


\section*{Acknowledgements}
The authors would like to thank Salvatore Tringali for helping us understanding the meaning of atoms, irreducibles, quarks and primes in this context.  We would also like to thank Alfred Gerolginger, who highlighted that as a consequence of \cite[Lemma~4.1]{b-g-r}, the monoid (under addition) of integral ideals of a numerical semigroup is unit-cancellative: we give a direct proof in Remark~\ref{rem:non-unit-cancellative}. Alfred Geroldinger also warned us about the potential clash of notation with the notion of class semigroup; Remark~\ref{rem:class-semigroup} was introduced accordingly.


\begin{thebibliography}{99}

\bibitem{ns-ap} A. Assi, M. D'Anna, P. A. García-Sánchez, \textit{Numerical semigroups and applications}, Second edition, RSME Springer series \textbf{3}, Springer, Switzerland, 2020.

\bibitem{b-k} V. Barucci, F. Khouja, \textit{On the class semigroup of a numerical
semigroup}, Semigroup Forum, \textbf{92} (2016) 377--392.   
\bibitem{b-g-r} A. Bashir, A. Geroldinger, A. Reinhart, On the arithmetic of stable domains, Communications in Algebra, \textbf{49} (2021), 4763--4787. 

\bibitem{b-g} P.-Y. Bienvenu and A. Geroldinger, \textit{On algebraic properties of power monoids of numerical monoids}, 
      Israel J. Math., to appear

\bibitem{laura} L. Casabella, \textit{On the class semigroup of numerical semigroups and semigroup rings}, Diploma thesis, Scuola Superiore di Catania (2022), unpublished.

\bibitem{numericalsgps} M. Delgado,  P.A.  García-Sánchez,  and  J. Morais, NumericalSgps, A package for numerical semigroups, Version 1.3.1 (2022), (Refereed GAP package), \url{https://gap-packages.github.io/numericalsgps}.

\bibitem{d} R. P. Dilworth, \textit{A Decomposition Theorem for Partially Ordered Sets}, Annals of Mathematics, \textbf{51} (1950), 161--166.

\bibitem{f-t} Y. Fan, and S. Tringali, \textit{Power monoids: a bridge between factorization theory and arithmetic combinatorics}, J. Algebra \textbf{512}, (2018), 252--294

\bibitem{g-hk} A. Geroldinger, F. Halter-Koch, Non--unique factorizations. Algebraic, combinatorial and analytic theory. Pure and Applied Mathematics (Boca Raton), \textbf{278}. Chapman \& Hall/CRC, Boca Raton, FL, 2006.

\bibitem{m} L. Mirsky, \textit{A dual of Dilworth's decomposition theorem}, American Mathematical Monthly, \textbf{78} (1971), 876--877.

\bibitem{v1f} M. O. Rodrigues de Abreu, M. Escudeiro Hernandes, \textit{On the value set of 1-forms for plane branches}, Semigroup Forum \textbf{105} (2022), 385--397.

\bibitem{ns} J. C. Rosales y P. A. García-Sánchez, \textit{Numerical semigroups}, Developments in Mathematics, \textbf{20}, Springer, New York, 2009.

\bibitem{kc} J. C. Rosales, P. A. García-Sánchez, J. I. García-García and M. B. Branco, \textit{Systems of inequalities and numerical semigroups}, J. Lond. Math. Soc. \textbf{65}(3) (2002), 611–623.

\bibitem{over} J. C. Rosales, P. A. García-Sánchez, J. I. García-García, and J. A. Jiménez-Madrid, \textit{The oversemigroups of a numerical semigroup}, Semigroup Forum, \textbf{67} (2003), 145–158.

\bibitem{tesi-serban} I. C. \c{S}erban, One-dimensional local rings and canonical ideals, PhD thesis in Pure Mathematics, Università di Roma ``La Sapienza'', Dipartamento di Matematica, 2006 (supervisor V. Barucci).


\bibitem{t} S. Tringali, \textit{An Abstract Factorization Theorem and Some Applications},  J. Algebra, \textbf{602} (2022), 352--380.

\bibitem{GAP4} The GAP~Group, GAP -- Groups, Algorithms, and Programming,   Version 4.12.2,  2022, \url{https://www.gap-system.org}.

\end{thebibliography}
\end{document}